\def\grd@save@target#1{%
  \def\grd@target{#1}}
\def\grd@save@start#1{%
  \def\grd@start{#1}}
\begin{document}

\makeatother

\title{Fundamental shadow links realized as links in $S^3$}
\author{Sanjay Kumar}
\date{\vspace{-5ex}}

\newcommand{\Addresses}{{
  \bigskip
  \footnotesize

  \textsc{Department of Mathematics, Michigan State University,
    East Lansing, MI, 48824, USA}\par\nopagebreak
  \textit{E-mail address}: \texttt{kumarsa4@msu.edu}

}}

\maketitle

\begin{abstract}
We use purely topological tools to construct several infinite families of hyperbolic links in the $3$-sphere  that  satisfy the Turaev-Viro invariant  volume conjecture posed by Chen and Yang.
To show that our links satisfy the volume conjecture, we prove that each has complement homeomorphic to the complement of a fundamental shadow link. These are links  
in connected sums of copies of $S^2 \times S^1$ for which the conjecture is known due to Belletti, Detcherry, Kalfagianni, and Yang.  Our methods also
verify the conjecture for several hyperbolic links with crossing number less than twelve. In addition, we show  that every link in the $3$-sphere is a sublink of a link that satisfies the conjecture. 

As an application of our results, we extend the class of known examples that satisfy the AMU conjecture on quantum representations of surface mapping class groups. For example, we give explicit elements in
the mapping class group of a genus $g$ surface with four boundary components for any $g$. 
For this, we use  techniques developed by Detcherry and Kalfagianni which relate the Turaev-Viro invariant volume conjecture to the AMU conjecture.  
\end{abstract}

\newtheorem{innercustomgeneric}{\customgenericname}
\providecommand{\customgenericname}{}
\newcommand{\newcustomtheorem}[2]{%
  \newenvironment{#1}[1]
  {%
   \renewcommand\customgenericname{#2}%
   \renewcommand\theinnercustomgeneric{##1}%
   \innercustomgeneric
  }
  {\endinnercustomgeneric}
}

\newcustomtheorem{customthm}{Theorem}
\newcustomtheorem{customlemma}{Lemma}
\newcustomtheorem{customprop}{Proposition}
\newcustomtheorem{customconjecture}{Conjecture}
\newcustomtheorem{customcor}{Corollary}

\theoremstyle{plain}
\newtheorem*{ack*}{Acknowledgements}
\newtheorem{thm}{Theorem}[section]
\newtheorem{lem}[thm]{Lemma}
\newtheorem{prop}[thm]{Proposition}
\newtheorem{cor}[thm]{Corollary}
\newtheorem{predefinition}[thm]{Definition}
\newtheorem{conjecture}[thm]{Conjecture}
\newtheorem{preremark}[thm]{Remark}
\newenvironment{remark}%
  {\begin{preremark}\upshape}{\end{preremark}}
 \newenvironment{definition}%
  {\begin{predefinition}\upshape}{\end{predefinition}}

\newtheorem{ex}[thm]{Example}

\renewcommand\thesubsection{\thesection\Alph{subsection}}

\section{Introduction}\label{1}

The Turaev-Viro invariants $TV_q(M)$ of a compact $3$-manifold $M$ \cite{TurV92} are real numbers  that depend on 
an integer  $r \geq 3,$  and a $2r$-th root of unity $q.$ In this paper, we will be  concerned with specifically the $SO(3)$-version of the Turaev-Viro invariant with $r \geq 3$, odd, and $q=\exp\left(\frac{2 \pi i}{r}\right)$.

In \cite{CheY18}, Chen and Yang provided computational evidence and proposed the following conjecture relating the volume of a manifold $M$, denoted as $Vol(M)$, to the Turaev-Viro invariants.

\begin{conjecture}[\cite{CheY18}] \label{VolC}
Let  $M$ be a hyperbolic $3$-manifold, either closed, with cusps, or compact with totally geodesic boundary,  and $q=\exp\left(\frac{2 \pi i}{r}\right).$ Then
$$\lim_{r \to \infty} \frac{2 \pi}{r} \log| TV_q(M)| = Vol(M),$$
as  $r$ ranges along  the odd natural numbers.
\end{conjecture}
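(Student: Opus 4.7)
The plan is to prove the conjecture by a direct asymptotic analysis of the Turaev-Viro state sum on a well-chosen triangulation of $M$, extending the broad strategy that has succeeded for fundamental shadow links. First I would produce an ideal triangulation adapted to the complete hyperbolic structure: for cusped $M$, invoke an Epstein-Penner or geometric triangulation (refining as needed to achieve positively oriented tetrahedra, or passing to angle structures in the sense of Casson-Rivin); for closed $M$, realize $M$ as a Dehn filling of a cusped hyperbolic manifold $N$ and combine an asymptotic surgery formula for $TV_q$ with the cusped case. Next, expand $TV_q(M)$ as a state sum indexed by admissible edge-colorings, each summand a product of quantum $6j$-symbols and quantum dimensions.

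The second main ingredient is the asymptotic expansion of quantum $6j$-symbols, due to Costantino and collaborators, extended to the full range of dihedral angles realizing truncated hyperbolic tetrahedra. When the rescaled colors $\theta_i = 2\pi a_i / r$ lie in the admissible region, the corresponding $6j$-symbol behaves like $\exp\!\bigl(\tfrac{r}{2\pi}\, Vol(T_\theta)\bigr)$, where $T_\theta$ is the truncated hyperbolic tetrahedron with dihedral angles $\theta_i$. Substituting this into the state sum and applying the Laplace / saddle-point method, one expects the dominant contribution to come from colorings whose scaled angles assemble into the dihedral angle structure of the complete hyperbolic triangulation of $M$, producing the asymptotic growth rate $\exp\!\bigl(\tfrac{r}{2\pi}\, Vol(M)\bigr)$.

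The hardest step, and the reason the conjecture remains open in general, is controlling the state sum outside this favorable regime and ruling out exponential cancellation between competing summands. To address it I would combine (i) Casson-Rivin's theorem that $Vol(M)$ is the strict maximum of the volume functional over all angle structures, which pins down the geometric saddle point; (ii) the Detcherry-Kalfagianni identity $TV_r(M) = \| RT_r(M) \|^2$ for odd $r$, which converts matching lower bounds into positivity estimates for Reshetikhin-Turaev invariants; and (iii) TQFT unitarity bounds to control oscillatory contributions from non-geometric colorings. The main obstacle is precisely this cancellation problem: even for knot complements in $S^3$ it is already intertwined with the Kashaev-Murakami-Murakami volume conjecture, and so a fully general proof very likely requires a new conceptual input, perhaps a moduli-theoretic interpretation of the state sum as a weighted count of $PSL(2,\mathbb{C})$ structures whose large-$r$ asymptotics recover $Vol(M)$ intrinsically.
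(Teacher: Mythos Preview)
The statement you are trying to prove is a \emph{conjecture}, and the paper does not prove it in general. Conjecture~\ref{VolC} is the Chen--Yang volume conjecture; it is stated as an open problem, and the paper's contribution is to verify it for specific infinite families of hyperbolic links in $S^3$ (Theorems~\ref{Families}, \ref{Fibered}, \ref{Links}, \ref{Sublink}). The paper's method is entirely topological: it shows that each link complement in question is homeomorphic to the complement of a fundamental shadow link in $\#^{k+1}(S^2\times S^1)$, for which the conjecture was already established in \cite{BelDKY}. Homeomorphic manifolds share both their Turaev--Viro invariants and, by Mostow--Prasad rigidity, their hyperbolic volumes, so the conjecture transfers. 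No new asymptotic analysis of $6j$-symbols or state sums is carried out in the paper itself.

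Your proposal, by contrast, sketches an attack on the full conjecture via saddle-point analysis of the state sum over a geometric triangulation. You correctly identify the central obstruction: controlling cancellation among non-geometric colorings and proving that the geometric saddle genuinely dominates. But your own text concedes this step ``remains open in general'' and ``very likely requires a new conceptual input.'' That concession is accurate, and it means the proposal is not a proof but an outline of a program whose decisive step is missing. The ingredients you list---Costantino's $6j$ asymptotics, Casson--Rivin maximality, the identity $TV_r = \|RT_r\|^2$, TQFT unitarity---are all genuine tools used in the partial results cited in the paper, but none of them, singly or together, currently closes the gap. In short: there is no proof here to compare, because the paper does not claim one and your proposal does not supply one.
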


The conjecture is a 3-manifold analogue of the original volume conjecture stated by Kashaev \cite{Kas97} which was reformulated in terms of the colored Jones polynomial by H. Murakami and J. Murakami \cite{MurM01}, and asserts that the asymptotics of the $N$-colored Jones polynomial of a hyperbolic knot at a particular root of unity approaches the volume of its complement. 

 Conjecture \ref{VolC}  was proved for the complement of the Borromean rings and the complement of the figure-eight knot by Detcherry, Kalfagianni, and Yang in \cite{DetKY18}. It was verified by Ohtsuki that the closed hyperbolic $3$-manifolds arising from integral Dehn surgeries on the figure-eight knot satisfy the conjecture  in \cite{Oht18}. More recently,  Wong and Yang extended Ohtsuki's result to closed hyperbolic $3$-manifolds obtained from rational surgeries along the figure-eight knot in \cite{WonY20}. 
Belletti, Detcherry, Kalfagianni, and Yang verified the conjecture
for the complements of  fundamental shadow links in \cite{BelDKY}. This  is an infinite class of hyperbolic links in connected sums of copies of $S^2 \times S^1$ which was 
first considered by  Constantino and Thurston \cite{ConT08}.
 The class is ``universal" in the sense that every orientable $3$-manifold with empty or toroidal boundary is obtained by  Dehn filling from the complement of a  fundamental shadow link.  Recently, Belletti \cite{Bel20} proved Conjecture \ref{VolC}
 for additional families of hyperbolic links in connected sums of  copies of $S^2\times S^1$. In \cite{Rol08}, Roland van der Veen introduced a family of links in $S^3$ known as the Whitehead chains, and he showed a version of the colored Jones polynomial volume conjecture is true for certain subfamilies.   K. H. Wong  \cite{Won19} proved Conjecture \ref{VolC} for  the same families of Whitehead chains and for  twisted Whitehead links.

 \subsection{Constructions of links}\label{1.1}
In this paper, we use purely topological methods to construct new infinite families of links in $S^3$ that satisfy Conjecture \ref{VolC}, and we show that several low crossing links  categorized by Livingston and Moore in \textit{LinkInfo} \cite{LinkInfo}  also satisfy the conjecture.  
To state our first result, let  $v_8 = 3.66386\dots$ denote the volume of a regular ideal hyperbolic octahedron.

\begin{figure}
\labellist
\pinlabel \small{$0$} at 47 61 
\pinlabel \small{$0$} at 96.5 61
\pinlabel \small{$0$} at 166 61
\pinlabel \small{$0$} at 47 160
\pinlabel \small{$0$} at 96.5 160
\pinlabel \small{$0$} at 166 160
\pinlabel \small{$0$} at 47 259
\pinlabel \small{$0$} at 96.5 259
\pinlabel \small{$0$} at 166 259
\pinlabel \small{$k+1$} at 106 3
\pinlabel \small{$k+1$} at 314 3
\pinlabel \small{$k+1$} at 106 102
\pinlabel \small{$k+1$} at 314 102
\pinlabel \small{$k+1$} at 106 201
\pinlabel \small{$k+1$} at 319 201
\pinlabel \small{$L_k$} at 420 283
\pinlabel \small{$J_k$} at 400 180
\pinlabel \small{$K_k$} at 403 80
\endlabellist
\centering
\includegraphics[scale=.75]{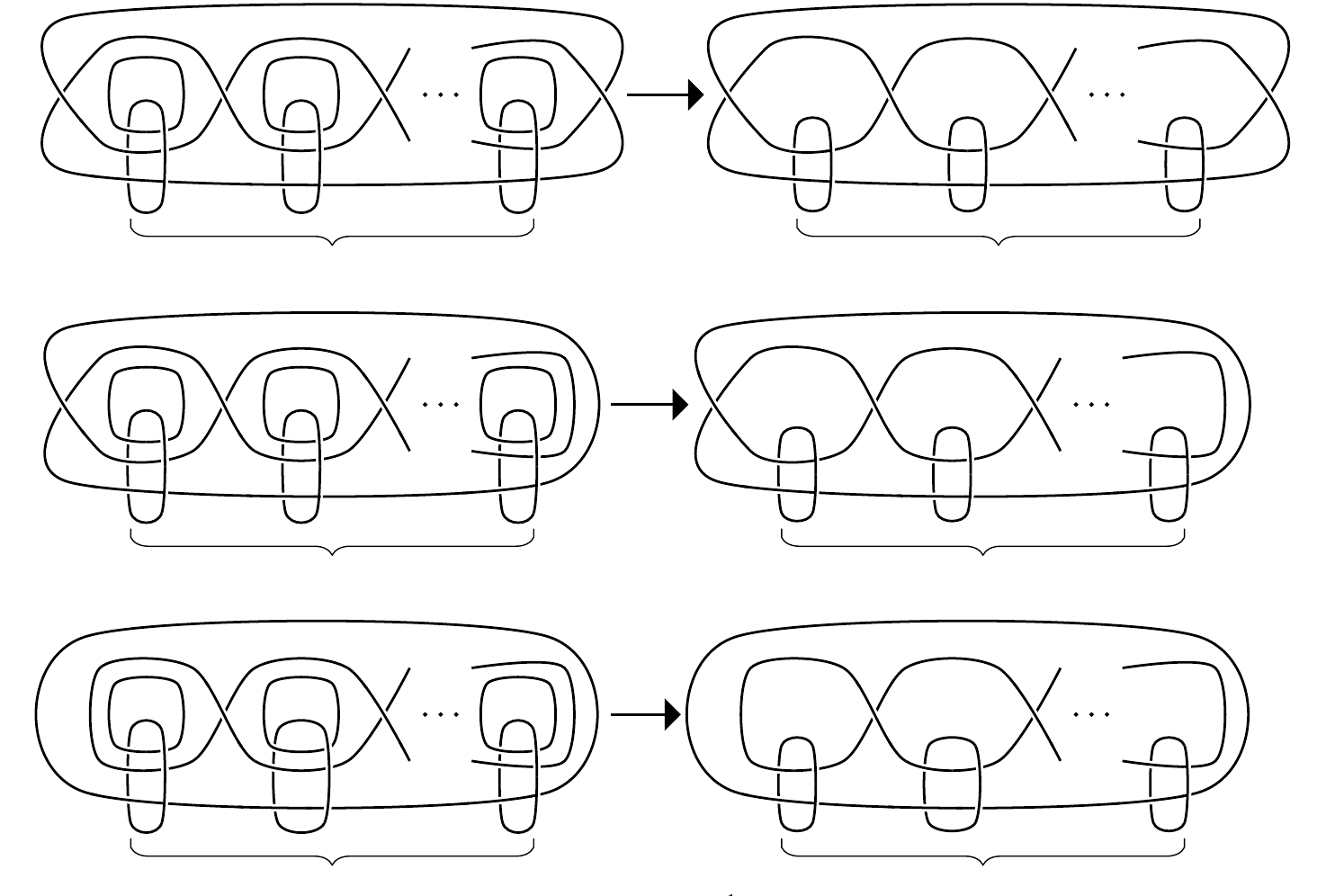}
\caption{On the left are the complexity $k$ fundamental shadow links embedded in connected sums of $S^2 \times S^1$, and on the right are their associated links in $S^3$. For the link family $\{L_k\}$ with $(5k+6)$ crossings, the number of components is $(k+2)$ for $k$ odd and $(k+3)$ for $k$ even. For the link family $\{J_k\}$ and $\{K_k\}$ with $(5k+5)$ and $(5k+4)$ crossings, the number of components is $(k+2)$ and $(k+3)$, respectively.}
\label{Fig:Families}
\end{figure}

\begin{customthm}{\ref{Families}}
Given  an integer $k \geq 1$, let $L_k$, $J_k$, and $K_k$ be the links in $S^3$ shown in Figure \ref{Fig:Families}. The complement of each $L_k$, $J_k$, and $K_k$ is hyperbolic
with volume $2kv_8$ and satisfies Conjecture \ref{VolC}.
 \end{customthm}

The link families in Theorem \ref{Families} are a generalization of the Borromean link and the Borromean twisted sisters in the sense that when $k=1$, their complements are homeomorphic with the links $L_1$, $J_1$, and $K_1$. See Corollary \ref{Borromean} and Figure \ref{Fig:TwistedSis} for more details.

To state our next result, we will use $B_n$ to denote the braid group of $n$ strings with standard generators $\sigma_1, \ldots, \sigma_{n-1}$.  
Recall that for a braid $b\in B_n$,  the  \emph{braided link} $\overline{b}$ is the link in $S^3$ consisting  of the  closure of  $b$ together with its braid axis. See Figure \ref{Fig:Braid}. 
For $k=1, 2, \ldots$, we define the braid $b_k$ as follows:

\begin{itemize}
\item For $k=1$, define $b_k:=\sigma_1^{-2} \sigma_2^{2} \in B_3$.
\item For $k=2m$, define $b_k\in  B_{k+3}$ by
 $$ b_k:=\prod_{i=1}^{m} (\sigma_{2i+1}\sigma_{2i} \cdots \sigma_3 \sigma_2 \sigma_1^2  \sigma_2^{-1} \sigma_3 \sigma_4 \cdots \sigma_{2i}
 \sigma_{2i+1}) (\sigma_{2i+2} \sigma_{2i+1} \cdots \sigma_3 \sigma_2^2 \sigma_3 \cdots \sigma_{2i+1} \sigma_{2i+2} ).$$
\item For $k=2m-1$, define $b_k\in  B_{k+3}$  by
$$b_k:=b\ \prod_{i=2}^{m} (\sigma_{2i} \sigma_{2i-1} \cdots \sigma_3 \sigma_2^2 \sigma_3 \cdots \sigma_{2i-1} \sigma_{2i}) (\sigma_{2i+1} \sigma_{2i} \cdots \sigma_3 \sigma_2 \sigma_1^2 \sigma_2^{-1} \sigma_3 \sigma_4 \cdots \sigma_{2i} \sigma_{2i+1}), $$
where $b=(\sigma_1)(\sigma_3 \sigma_2 \sigma_1^2 \sigma_2^{-1} \sigma_3).$ 
\end{itemize}

\begin{customthm}{\ref{Fibered}}
For $k\geq 1$,  let $M_k:=S^3\setminus \overline{b_k}$ denote the complement of the braided link $\overline{b_k}.$ Then
\begin{enumerate}  
\item $M_k$ fibers over  $S^1$  with fiber surface a disk with punctures,
\item $M_k$ is hyperbolic with volume $2kv_8,$ and
\item $M_k$ satisfies  Conjecture \ref{VolC}.
\end{enumerate}
\end{customthm}

As mentioned earlier  before the results of this paper, Conjecture \ref{VolC} was known for a family of  links  $\mathcal{W}$ in $S^3$ which are a subset of the  Whitehead chains containing one belt component  \cite{Won19}. Like the links of Theorems \ref{Families} and \ref{Fibered}, the links in 
$\mathcal{W}$  are octahedral with volumes  multiples of $v_8$. However, as the next results shows, our links are in general  distinct from these of
$\mathcal{W}$.

\begin{customprop}{\ref{Whitehead}}
For an integer $k\geq4$, no link of  Theorems \ref{Families} and  \ref{Fibered} has complements   homeomorphic to the complement of an element of $\mathcal{W}$. 
\end{customprop}

We will see each  of the complements of the link families appearing  in Theorems \ref{Families} and \ref{Fibered} can be obtained, up to homeomorphism, as the complement of a link in connected sums of $S^2\times S^1$ represented by a surgery presentation of a link  in $S^3$ of the form shown in the left diagrams of Figure \ref{Fig:Families} for Theorem \ref{Families}. Although one may be able to show the results by using Kirby calculus, a more natural method of proof  is to use techniques from
Turaev's shadow theory  to $3$-manifolds \cite{Tur94} to construct links with complements homeomorphic to complements of fundamental shadow links. This is due to the relationship between fundamental shadow links and a particular type of $2$-dimensional polyhedron equipped with a collapsing map. It will allow us to generalize our argument, showing that all links are sublinks of a fundamental shadow link in $S^3$ which we will discuss in Section \ref{1.2}.

 The general method of proof for Theorems \ref{Families} and \ref{Fibered} is to find certain fundamental shadow links  embedded into connected sums of $S^2\times S^1$ with  complements  homeomorphic to the complements  of links in $S^3$. Since these complements are homeomorphic, their Turaev-Viro invariants will be equal. It  was also shown in \cite{ConT08} that complements of fundamental shadow links have explicit decompositions into ideal hyperbolic octahedra and are therefore hyperbolic. Since the complements are hyperbolic,  the hyperbolic volume will be a topological invariant by Mostow-Prasad rigidity and will also be equal. This implies that these links in $S^3$ will also satisfy Conjecture \ref{VolC}. This gives a  topological proof of links in $S^3$ satisfying the  conjecture where the analysis was previously  done for the fundamental shadow links in connected sums of $S^2 \times S^1$ in \cite{BelDKY}. As stated before, we  show that the families of links we construct are distinct from the links in $S^3$ for which Conjecture \ref{VolC} was previously verified.

To give more detail, we can build a specific type of $2$-dimensional polyhedron introduced by Turaev \cite{Tur94} known as a \emph{shadow} with additional topological data known as the \emph{gleam}. From the shadow and gleam, we can construct a  connected and orientable $3$-manifold. In \cite{ConT08}, they introduce a particular family of shadows whose corresponding $3$-manifolds are the complements  of fundamental shadow links.
Starting with  specific shadows of fundamental shadow link complements, 
we will construct a new shadow with a link complement in $S^3$ as its corresponding $3$-manifold. 

In \cite{ConT08}, the authors show there is a strong relationship between a shadow and a $2$-dimensional polyhedron known as the \emph{Stein surface}. In particular when the manifold is a link in $S^3$, we will see  that Stein surfaces and shadows are equivalent where the Stein surface is equipped with a  map $h$ from the manifold to the Stein surface. Through the map $h$, we can drill out tori from our link complement in $S^3$ to obtain a $3$-manifold homeomorphic to the complement of the fundamental shadow link. Since the manifolds are homeomorphic, they will both satisfy Conjecture \ref{VolC}.

Our methods also allow us to verify Conjecture \ref{VolC} for several octahedral links up to $11$-crossings.  There are only finitely many fundamental shadow links for a fixed volume $2kv_8$, therefore we can check possible homeomorphisms between link complements computationally.   Using \textit{SnapPy} \cite{SnapPy}, we can verify that all but one of the  links of volume  $2v_8$ up to 11-crossings given in
 \textit{LinkInfo} \cite{LinkInfo}   have complements homeomorphic to the complement of the simplest fundamental shadow links. See Table \ref{Table:Links}  and Figure \ref{Fig:SixEx} for details.  With the notation of  \textit{LinkInfo}, \textit{SnapPy} determines that the link $L_{10n59}$ is not homeomorphic to a fundamental shadow link. This could be the result of a numerical error and is not rigorous, therefore the link $L_{10n59}$ may still be a   fundamental shadow link. We believe this is not the case, and that the link complement is  not homeomorphic to the  complement of a fundamental shadow link; however,  to show this is true would require a different technique.

\begin{thm}\label{Links} Using the notation of  \textit{LinkInfo},
each of the following links is hyperbolic with volume $2v_8$ and satisfies Conjecture \ref{VolC}.
 \begin{table}[h]
\centering
\begin{tabular}{|c| c| c| c| c| c|}
\hline
 $L_{6a4}$ & $L_{8n5}$ & $L_{8n7}$ & $L_{9n25}$ & $L_{9n26}$ & $L_{10n32} $ \\
\hline
 $L_{10n36}$ & $L_{10n70}$ & $L_{10n84}$ & $L_{10n87} $ & $L_{10n97} $ & $L_{10n105}$ \\
\hline
 $L_{10n108}$ & $L_{11n287}$ & $L_{11n376}$& $L_{11n378} $  & $ L_{11n385}$ &  \\
 \hline
\end{tabular}
\label{Table:Links}
\end{table}
\end{thm}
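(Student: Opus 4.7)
The plan is to reduce each entry in the table to a fundamental shadow link complement, where Conjecture \ref{VolC} is already known by Belletti--Detcherry--Kalfagianni--Yang, and then invoke topological invariance of $TV_q$ and (by Mostow--Prasad rigidity) of hyperbolic volume. Concretely, I would carry this out in four steps.

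First, I would use a rigorous numerical package (e.g.\ \textit{SnapPy} together with its verified interval-arithmetic routines) to triangulate each of the seventeen listed link complements, verify that each admits a complete hyperbolic structure, and compute its volume with certified error bounds to conclude that the volume equals $2v_8$. This is where the explicit bound on crossing number is used: all candidates are small enough to be triangulated and certified in practice.

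Second, I would enumerate the (finitely many) fundamental shadow link complements of volume $2v_8$. Since a complexity-$k$ fundamental shadow link has volume $2kv_8$, the relevant case is $k=1$, for which the list of possible shadows is short. For each such shadow I would build an ideal triangulation of the corresponding fundamental shadow link complement directly from its octahedral decomposition, as described in \cite{ConT08}.

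Third, for each link $L$ in the table I would invoke \textit{SnapPy}'s verified \texttt{is\_isometric\_to} to test whether the hyperbolic manifold $S^3\setminus L$ is isometric to one of the fundamental shadow link complements produced in Step 2. A successful match is a rigorous proof of isometry, hence by Mostow--Prasad rigidity a homeomorphism of the underlying manifolds. Alternatively, where verified isometry routines are inconclusive, I would construct an explicit homeomorphism by hand using the Stein-surface/shadow techniques outlined in Section \ref{1.1}: realize $L$ as a sublink of a surgery diagram of the form in Figure \ref{Fig:Families}, and drill/fill using the collapsing map $h$ to exhibit the homeomorphism with a chosen fundamental shadow link complement.

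Finally, once the homeomorphism $S^3\setminus L \cong M$ with $M$ a fundamental shadow link complement is established, $TV_q(S^3\setminus L)=TV_q(M)$ and $\mathrm{Vol}(S^3\setminus L)=\mathrm{Vol}(M)=2v_8$, so Conjecture \ref{VolC} for $L$ follows from its validity for $M$ \cite{BelDKY}. The main obstacle is the rigor of Step 3: non-isometry cannot be decided numerically, which is precisely why the excluded link $L_{10n59}$ is flagged as unresolved; any entry for which the verified isometry check fails would need to be handled by the explicit shadow-theoretic construction of Step 3, and this is where most of the work would actually lie.
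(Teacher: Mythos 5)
Your proposal follows essentially the same route as the paper: enumerate the finitely many complexity-one fundamental shadow links, input their surgery presentations and the candidate link complements into \textit{SnapPy}, verify the homeomorphisms computationally, and then transfer Conjecture \ref{VolC} from the fundamental shadow link complements via topological invariance of $TV_q$ and Mostow--Prasad rigidity. The only differences are cosmetic (the paper builds the fundamental shadow link complements from surgery diagrams rather than from the octahedral decomposition, and it does not need the shadow-theoretic fallback, since \textit{SnapPy}'s positive isometry certificates suffice for every link in the table).
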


\subsection{All links are sublinks of fundamental shadow links}\label{1.2}

From using  methods similar to those in Section \ref{1.1}, we are able to show that all links are contained in fundamental shadow links in $S^3$; thus, all links can be augmented to one that satisfies Conjecture \ref{VolC}. The process for augmenting a link is shown in Section \ref{4}. Our result is stated in terms of closed braids; however, by a classical theorem of Alexander \cite{Ale23}, every link can be obtained as the closure of a braid. We begin by defining the \textit{length} of a braid $b$ to be the number of standard generators appearing in the word, and we denote the braid closure by $\hat{b}$. The result requires each generator from the braid group to appear; however, since the algorithm will work for  unreduced words in $B_n$, it works for every link. For example, the unlink of two components can be viewed as the braid closure of $\sigma_1 \sigma_1^{-1} \in B_2$.

\begin{customthm}{\ref{Sublink}}
Let $b\in B_n$ be an unreduced word of length $k$ with each generator appearing at least once. Then $\hat{b}$ is a sublink of a  link  $L$ in $S^3$  where the complement $S^3 \backslash L$ satisfies Conjecture \ref{VolC} and has volume $2kv_8$.
\end{customthm}

Theorem \ref{Sublink} should be compared to a result of Baker \cite{Bak02} and  Roland van der Veen \cite{Rol09}  that states all links are sublinks  of arithmetic links. In particular, Theorem \ref{Sublink} has similar properties to Corollary $1$  in \cite{Rol09}. By using augmented knotted trivalent graphs, Roland van der Veen showed that any link is a sublink of a link that satisfies an $SO(3)$ version of the original volume conjecture.

\subsection{Applications to quantum representations} \label{1.3}
Our results on Conjecture \ref{VolC} also have applications to a conjecture of  Andersen, Masbaum, and Ueno (AMU Conjecture) about quantum representations of surface mapping class groups.
For a compact and oriented surface $\Sigma_{g,n}$ with genus $g$ and $n$ boundary components, let $\mathrm{Mod}(\Sigma_{g,n})$ be its mapping class group fixing the boundaries, and let $|\partial \Sigma_{g,n} |$ denote the set of boundary components of $\Sigma_{g,n}$. Now define $U_r$ to be the set of nonnegative and even integers less than $r-2$.  Given an odd integer $r\geq 3$,  a primitive $2r$-th root of unity, and a coloring $c: |\partial \Sigma_{g,n} | \rightarrow U_r$,
the $\mathrm{SO}(3)$-Witten-Reshetikhin-Turaev TQFT \cite{ BlaHMV95, ResT91, Tur94} gives a
projective representation $$\rho_{r,c} : \mathrm{Mod}(\Sigma_{g,n}) \rightarrow \mathrm{PGL}_{d_{r,c}}(\mathbb{C})$$
called the $\mathrm{SO}(3)$-\textit{quantum representation} where $d_{r,c}$ is an integer dependent on the level $r$ and coloring $c$. 
The AMU conjecture
 relates the Nielsen-Thurston classification of elements of the mapping class group for a compact surface $\Sigma_{g,n}$ to its quantum representations.

\begin{conjecture}[\cite{AndMU}, AMU conjecture]\label{AMU}
An element of the mapping class  $f \in Mod(\Sigma_{g,n})$ has pseudo-Anosov parts if and only if, for any big enough level $r$, there is a choice of coloring $c$ of the elements of $\partial \Sigma_{g,n}$ such that $\rho_{r,c}(f)$ has infinite order. 
\end{conjecture}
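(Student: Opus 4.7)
The plan is to reduce Conjecture \ref{AMU} to the Turaev-Viro volume conjecture (Conjecture \ref{VolC}) applied to the mapping torus $T_f$ of $f$, combined with the Nielsen-Thurston classification of surface mapping classes. The bridge is a criterion of Detcherry and Kalfagianni that expresses $TV_r(T_f)$ as a weighted sum of $|\mathrm{tr}\, \rho_{r,c}(f)|^2$ over admissible boundary colorings $c$: exponential growth of $TV_r(T_f)$ then forces at least one $\rho_{r,c}(f)$ to have unbounded, hence infinite, order at every sufficiently large level.

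For the forward direction, suppose $f$ has a pseudo-Anosov part. Then the JSJ decomposition of $T_f$ contains at least one hyperbolic piece, so its simplicial volume $\|T_f\|$ is strictly positive. Granting Conjecture \ref{VolC} for $T_f$ (or an additive extension under gluing along tori to handle the reducible-with-pseudo-Anosov case), $TV_r(T_f)$ grows exponentially in $r$, and the Detcherry-Kalfagianni criterion produces a coloring $c$ and arbitrarily large odd $r$ for which $\rho_{r,c}(f)$ has infinite order.

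For the backward direction, suppose $f$ has no pseudo-Anosov parts. Then $f$ preserves a multicurve $\mathcal{C}$ on $\Sigma_{g,n}$, acts periodically on each component of $\Sigma_{g,n}\setminus\mathcal{C}$, and differs from such a periodic action by a product of Dehn twists along $\mathcal{C}$. Functoriality of the $\mathrm{SO}(3)$ TQFT under cutting along $\mathcal{C}$ decomposes $\rho_{r,c}(f)$ as a direct sum, indexed by admissible colorings of $\mathcal{C}$, of representations built from the periodic restrictions and from Dehn twist powers. Each Dehn twist acts diagonally on the TQFT vector space with eigenvalues that are roots of unity, so its projective image has finite order; each periodic restriction has finite order in the mapping class group, hence finite order image. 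A common multiple across the finitely many pieces and admissible colorings then bounds the order of $\rho_{r,c}(f)$.

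The hard part is the forward direction. Conjecture \ref{VolC} is itself open for general mapping tori, and any unconditional proof requires establishing the volume conjecture for a class of $3$-manifolds broad enough to realize every fibered pseudo-Anosov monodromy, together with a JSJ-additive version handling the reducible case. Theorems \ref{Families}, \ref{Fibered}, and \ref{Sublink} of this paper make progress by identifying specific fibered link complements in $S^3$ with fundamental shadow link complements, for which Conjecture \ref{VolC} is known by \cite{BelDKY}; extending such shadow-theoretic identifications to cover every pseudo-Anosov monodromy appears to demand substantially new topological input. A secondary subtlety is that the Detcherry-Kalfagianni sum involves contributions from all colorings, so one must also rule out cancellations that would suppress the exponential growth of individual summands.
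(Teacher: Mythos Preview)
The statement is presented in the paper as an open \emph{conjecture}; the paper contains no proof of it, only verifications for particular mapping classes (Theorem~\ref{4Sphere}, Corollaries~\ref{Mon} and~\ref{Even}). There is therefore no proof in the paper to compare your proposal against, and your proposal is not a proof either: as you acknowledge, the forward direction is conditional on Conjecture~\ref{VolC} (or a Gromov-norm extension thereof) for an arbitrary mapping torus, which is itself open. The reduction mechanism you describe---the trace identity of Proposition~\ref{TVTrace} together with the polynomial bound on $\dim V_{r,c}$ forcing an eigenvalue off the unit circle---is exactly Theorem~\ref{LimInf} of Detcherry--Kalfagianni, and is precisely the tool the paper invokes for its explicit examples. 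So your forward-direction strategy coincides with the paper's methodology, simply asserted in a generality where the hypothesis $lTV(M(f))>0$ is not known to hold.

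Two smaller points. Your ``secondary subtlety'' about cancellations is not a genuine obstacle: the sum in Proposition~\ref{TVTrace} consists of non-negative terms $|Tr(\rho_{r,c}(f))|^2$, and since the number of admissible colorings grows only polynomially in $r$, exponential growth of the sum forces at least one summand to exceed the dimension at each sufficiently large level, with $c$ allowed to depend on $r$ exactly as the conjecture permits. For the backward direction, your idea is essentially right but the direct-sum decomposition is not the cleanest packaging; the standard argument is that when $f$ has no pseudo-Anosov part some power $f^N$ is a multi-twist along disjoint curves, multi-twists act diagonally by roots of unity in every $\rho_{r,c}$, and hence $\rho_{r,c}(f)$ has finite projective order. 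This direction is generally regarded as accessible, but neither it nor the forward direction is claimed as proved in the paper.
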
 
In \cite{AndMU}, the authors also verified Conjecture \ref{AMU} for $\Sigma_{0,4}$ and later  Santharoubane for $\Sigma_{1,1}$ in \cite{San12}.  Egsgaard and Jorgensen in \cite{EgsJ16} and Santharoubane in \cite{San17}, by using different approaches, produced elements of $Mod(\Sigma_{0,2n})$  satisfying  Conjecture \ref{AMU}. For higher genus surfaces, there have been results using different methods. In \cite{MarS16}, March\'e and Santharoubane construct  finitely many conjugacy classes of pseudo-Anosov elements for $Mod(\Sigma_{g,1})$, and  Detcherry and Kalfagianni construct cosets of $Mod(\Sigma_{g,1})$ for sufficiently large $g$ that satisfy  Conjecture \ref{AMU} in \cite{DetK202}. Now we remark that two  elements $f, g \in Mod(\Sigma_{g,n})$  are \emph{independent} if there is no $h \in Mod(\Sigma_{g,n})$ such that both $f$ and $g$ are  conjugate to non-trivial powers of $h$. As explained in \cite{DetK202}, the motivation behind the definition of independent is if $f$ and $g$ are not independent, then $f$ satisfies the AMU conjecture if and only if $g$ does. 
  It has  been shown by Detcherry and Kalfagianni in \cite{DetK19} that there are infinitely many pairwise independent pseudo-Anosov elements in $Mod(\Sigma_{g,2})$ for $ g \geq 3$ and for $Mod(\Sigma_{g,n})$ where $g \geq n \geq 3$. 

Using the approach of \cite{DetK19} and by identifying 3-manifolds with explicit fibrations among the link complements in Theorems \ref{Families} and \ref{Links}, we obtain the following.   For more details, see Section \ref{5}.

 \begin{customthm}{\ref{4Sphere} }
Given $g \geq 0$, there is a pseudo-Anosov element of $Mod(\Sigma_{g,4})$ that satisfies Conjecture \ref{AMU}. Furthermore for $g \geq 3$, there are infinitely many pairwise independent pseudo-Anosov elements in $Mod(\Sigma_{g,4})$ that satisfy Conjecture \ref{AMU}.
\end{customthm}
 
As a byproduct of the proof of Theorem  \ref{4Sphere}, we also extend the known results of Conjecture \ref{AMU} to produce examples in  $Mod(\Sigma_{n-1,n})$ and $Mod(\Sigma_{n-2,n})$ for $n \geq 5$, as well as in $Mod(\Sigma_{1,3})$ and $Mod(\Sigma_{3,3})$. 

Finally as an application of Theorems \ref{Fibered} and \ref{Sublink}, we also produce 
explicit elements in $Mod(\Sigma_{0,n})$, for all $n>3$,  that satisfy Conjecture \ref{AMU}. In particular for most choices of $n$, the elements we find are pseudo-Anosov  as stated in the following corollary. 

\begin{cor}
For $n \geq 4$, if either $n \geq 9$ or $n$ is even, then there exists a pseudo-Anosov  element of $Mod(\Sigma_{0,n})$ that satisfies Conjecture \ref{AMU}. 
\end{cor}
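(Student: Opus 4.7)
The plan is to invoke the Detcherry--Kalfagianni principle: if a hyperbolic $3$-manifold $M$ fibers over $S^1$ with fiber $\Sigma$ and monodromy $\phi$, and $M$ satisfies Conjecture \ref{VolC}, then $\phi\in\mathrm{Mod}(\Sigma)$ is pseudo-Anosov and satisfies Conjecture \ref{AMU}. The task then reduces to exhibiting, for each $n$ allowed by the hypothesis, a hyperbolic fibered link complement in $S^3$ whose fiber is $\Sigma_{0,n}$ and which satisfies Conjecture \ref{VolC}.

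For this I would appeal directly to Theorem \ref{Fibered}. For each $k\ge 1$ the complement $M_k=S^3\setminus\overline{b_k}$ is hyperbolic, fibers over $S^1$ with fiber a disk with $3$ punctures when $k=1$ and with $k+3$ punctures when $k\ge 2$, and satisfies Conjecture \ref{VolC}. Since a disk with $m$ punctures is homeomorphic as a compact oriented surface with boundary to $\Sigma_{0,m+1}$, the fiber of $M_k$ is $\Sigma_{0,4}$ for $k=1$ and $\Sigma_{0,k+4}$ for $k\ge 2$. Thurston's fibered hyperbolization theorem certifies that the monodromy $\phi_k$ is pseudo-Anosov in the mapping class group of the fiber, and the Detcherry--Kalfagianni theorem combined with Theorem \ref{Fibered}(3) promotes this to $\phi_k$ satisfying Conjecture \ref{AMU}.

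Finally, to match the indices appearing in the statement I would choose $k=1$ when $n=4$, $k=n-4$ (even and $\ge 2$) when $n\ge 6$ is even, and $k=n-4$ (of either parity, $\ge 5$) when $n\ge 9$. Together these assignments furnish an element $\phi_k\in\mathrm{Mod}(\Sigma_{0,n})$ for every $n$ claimed in the corollary, and each such $\phi_k$ is pseudo-Anosov and satisfies Conjecture \ref{AMU}. The one step requiring careful topological bookkeeping, and the main obstacle to writing the proof cleanly, is to identify the fiber surface of $M_k$ with $\Sigma_{0,k+4}$ with the correct total count of boundary components (one per strand of $b_k$, plus the outer boundary inherited from the braid axis) so that the action of $b_k$ is honestly the monodromy in $\mathrm{Mod}(\Sigma_{0,n})$; once that identification is made, the Detcherry--Kalfagianni machinery and Thurston's theorem conclude the proof.
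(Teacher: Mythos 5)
Your argument works for $n=4$ and for even $n$, and there it coincides with the paper's proof: $b_1=\sigma_1^{-2}\sigma_2^2$ and the $b_k$ with $k$ even are products of conjugates of $\sigma_i^{\pm2}$, hence \emph{pure} braids, so $\Gamma(b_k)$ fixes every puncture of the fiber $D_{k+3}$ and genuinely defines an element of $Mod(\Sigma_{0,k+4})$ in the sense used throughout the paper (mapping classes fixing the boundary components, which is what the colorings $c$ of $\partial\Sigma_{g,n}$ and Theorem \ref{LimInf} require).

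The gap is in your choice $k=n-4$ for odd $n\geq 9$. For odd $k\geq 3$ the word $b_k$ contains the isolated factor $\sigma_1$ (from $b=(\sigma_1)(\sigma_3\sigma_2\sigma_1^2\sigma_2^{-1}\sigma_3)$), so its underlying permutation is the transposition $(1\,2)$ and $b_k$ is \emph{not} a pure braid; indeed the paper points out that for $k$ odd the corresponding sublink of $\overline{b_k}$ is a knot rather than a two-component link. The monodromy $\Gamma(b_k)$ therefore permutes two punctures of $D_{k+3}$ and does not lie in $Mod(\Sigma_{0,k+4})$ as defined (fixing the boundaries), so you cannot feed it into the Detcherry--Kalfagianni criterion to produce the element claimed in the corollary. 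This is precisely why the paper treats odd $n=7+2s\geq 9$ by a different construction: it applies Theorem \ref{Sublink} to the unreduced word $(\sigma_1\sigma_1^{-1})^s(\sigma_2\sigma_2^{-1})\in B_3$ to get a fundamental shadow link in $S^3$ of volume $2(2s+2)v_8$, then performs a full twist along the component $L'$ to exhibit it as the braided link of an explicit \emph{pure} braid $\omega_{7+2s}\in B_{6+2s}$, whose monodromy is then an honest pseudo-Anosov element of $Mod(\Sigma_{0,7+2s})$. To repair your proof you must either supply such pure-braid monodromies for odd $n\geq 9$ or justify a version of the quantum representations and of Theorem \ref{LimInf} for mapping classes that permute boundary components, neither of which is in your write-up.
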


 To the author's knowledge, this provides the first examples of pseudo-Anosov elements satisfying Conjecture \ref{AMU} in $Mod(\Sigma_{0,n})$ for an odd $n$. See Corollary \ref{Even} for the explicit elements  as well as more details.

The paper is organized as follows: We define the previously mentioned shadows, fundamental shadow links, and the Stein surfaces in Section \ref{2}. We also describe the relationship between the shadows and the Stein surfaces for links in $S^3$. In Section \ref{3}, we prove the results of Theorems \ref{Families} and \ref{Fibered} by constructing link complements from particular shadows, and we describe the implementation of \textit{SnapPy} to prove Theorem \ref{Links}. In Section \ref{4}, we introduce and prove an algorithm for augmenting a link in $S^3$ into a fundamental shadow link.   In Section \ref{5}, we  extend the results of Conjecture \ref{AMU} to the previously mentioned elements of the mapping class groups by using the links from Section \ref{3} along with a result by Detcherry and Kalfagianni which shows Conjecture \ref{VolC} implies Conjecture \ref{AMU} \cite{DetK19}. 

\begin{ack*}
\upshape{The author wishes to thank his advisor Efstratia Kalfagianni for helpful discussions and for her guidance. Furthermore, the author would  like to thank Renaud Detcherry for his comments and suggestions on an earlier version of this paper. The author would also like to thank the referee for carefully reading and providing suggestions to improve the overall exposition of the paper. This material is based on  research partially supported  by NSF grants  DMS-1404754, DMS-1708249, DMS-2004155  and by a Dr. Paul and Wilma Dressel Endowed Scholarship from the Department of Mathematics at Michigan State University.  }
\end{ack*}


\section{Preliminaries}\label{2}

\subsection{Shadows of 3-manifolds}\label{2.1}
We will start with a summary of  shadows of $3$-manifolds. As noted previously, shadows allow for an approach for studying  $3$-manifolds in terms of $2$-dimensional polyhedra.  A more detailed discussion by Turaev can be found in \cite{Tur94}.

We will begin with shadows of oriented, closed, and connected $3$-manifolds.

\begin{figure}
\labellist
\pinlabel \small{Region} at 28 15
\pinlabel \small{Edge} at 170 15
\pinlabel \small{Vertex} at 312 0
\endlabellist
\centering
\includegraphics[scale=1]{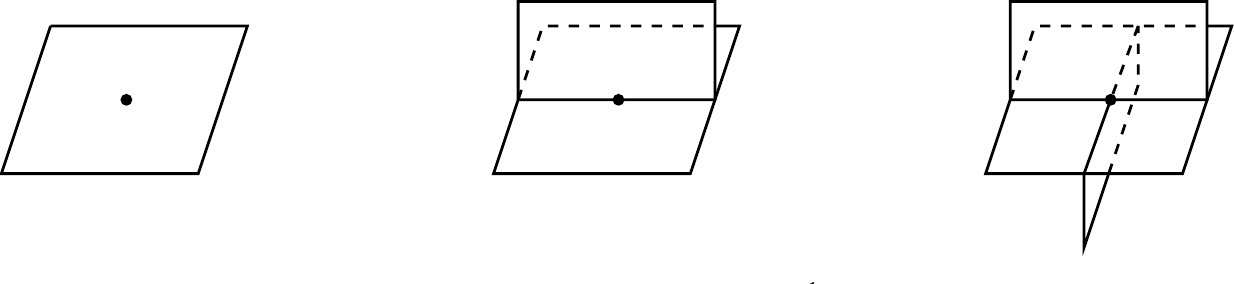}
\caption{Local models of the simple polyhedron.}
\label{Fig:Local}
\end{figure}

\begin{definition}\label{Def:Shadows}
A simple polyhedron $P$ is a compact topological space which locally is homeomorphic to one of the models in Figure \ref{Fig:Local}. The set of points with local model of the right two types form a $4$-valent graph called the $\emph{singular set}$ denoted by $Sing(P)$. In particular, the set of points locally homeomorphic to the third model are known as $\emph{vertices}$, and the set of points locally homeomorphic to the second model are known as \emph{edges}. The set of connected components $P \backslash Sing(P)$ are called $\emph{regions}$ of $P$, and the number of vertices of $P$ is called the $\emph{complexity}$ of $P$. The set of points with local models corresponding to the boundaries of  Figure \ref{Fig:Local} are known as the $\emph{boundary}$ of $P$, denoted $\partial P$. If $P$ has no boundary components, then we say $P$ is $\emph{closed}$. 
\end{definition}

\begin{definition}\label{Def:4Mfd}
Let $W$ be a piecewise linear, compact, and oriented $4$-manifold, and let $P$ be a closed simple polyhedron. We say that $P \subset W$ is a $\emph{shadow}$ of $W$  if $W$ collapses onto $P$ and if $P$ is locally flat; otherwise stated, for each point $p \in P$, there exists a neighborhood $Nbd(p; P)$ that is contained in a $3$-dimensional submanifold of $W$. 
\end{definition}

\begin{definition}\label{Def:3Mfd}
We say that a closed simple polyhedron $P$ is a $\emph{shadow}$ of an oriented $3$-manifold $M$ if $P$ is a shadow of a $4$-manifold $W$ with $\partial W = M.$
\end{definition}

\begin{thm}[\cite{Tur94}, Theorem IX.$2.1.1$]\label{ManiShad}
 Every closed, oriented, and connected $3$-manifold has a shadow.
\end{thm}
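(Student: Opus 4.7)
The plan is to reduce the existence of a shadow of the closed, oriented, connected $3$-manifold $M$ to the existence of a compact oriented $4$-manifold $W$ bounded by $M$ together with a suitable $2$-polyhedron inside $W$. The key input will be a surgery presentation of $M$, which is well suited to producing an explicit $2$-dimensional spine of $W$ that satisfies Definition \ref{Def:Shadows}.

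First, I would invoke the Lickorish--Wallace theorem to write $M$ as the result of integer Dehn surgery on a framed link $L \subset S^{3}$. Letting $W$ be the trace of this surgery, i.e.\ the $4$-manifold obtained from $B^{4}$ by attaching one $2$-handle along each component of $L$ with the prescribed framing, one has a compact oriented piecewise-linear $4$-manifold with $\partial W = M$. By Definition \ref{Def:3Mfd}, it suffices to exhibit a closed simple polyhedron $P \subset W$ onto which $W$ collapses and which is locally flat at every point.

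Next I would build $P$ directly from a regular projection of $L$. Fix a projection $2$-sphere $S \subset S^{3} = \partial B^{4}$ (thought of as the equator of a slightly shrunken $S^{3}$ pushed into $B^{4}$) onto which $L$ projects as a $4$-valent graph $G$ with transverse double points. Perturb each component of $L$ so that near each crossing one strand lies on $S$ and the other passes just off $S$ into the interior of $B^{4}$; then attach, for each component $K_{i}$ of $L$, a $2$-disk $D_{i}$ whose boundary follows the projected image of $K_{i}$ and whose interior is pushed slightly off $S$ into the corresponding $2$-handle core. The resulting set $P = S \cup \bigl(\bigcup_{i} D_{i}\bigr)$ is a closed simple polyhedron: away from $G$ it is a $2$-manifold, along the edges of $G$ three sheets meet in the local model of Figure \ref{Fig:Local} (the two sheets of $S$ separated by the arc of a link component together with the attached disk $D_{i}$), and at each crossing four such sheets meet in the vertex model. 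Local flatness at each point of $P$ is guaranteed by the fact that the two strands at each crossing lie in different perpendicular slices, so a neighborhood embeds into a $3$-dimensional submanifold of $W$.

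Finally, I would show that $W$ collapses onto $P$. The $4$-ball collapses onto $S$, since $B^{4}$ deformation retracts onto $S^{2} \subset S^{3} = \partial B^{4}$ pushed in. Each attached $2$-handle $D^{2} \times D^{2}$ collapses onto its core disk relative to its attaching region, and the attaching circles have already been arranged to lie on $P \cap \partial B^{4}$; assembling these collapses produces a collapse of $W$ onto $P$. The main obstacle in carrying this out rigorously is the bookkeeping at the crossings: one must choose the perturbations of $L$ and the push-ins of the disks $D_{i}$ consistently so that the local models around edges and vertices of $\mathrm{Sing}(P)$ really match Figure \ref{Fig:Local} and so that the handle collapses glue together without producing extra singularities. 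Once this local verification is done, $P$ is a shadow of $W$ and hence, by Definition \ref{Def:3Mfd}, a shadow of $M$.
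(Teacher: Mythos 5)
The paper does not actually prove this theorem---it is quoted from Turaev---so your proposal can only be compared with the standard argument, which is indeed the one you outline: take a Lickorish--Wallace surgery presentation of $M$, let $W$ be the trace of the surgery, and build the polyhedron from a projection of the link together with the core disks of the $2$-handles. This is precisely the machinery the paper assembles in Section \ref{2.4} (the polyhedron $P_L$ for a link in $S^3$) and Lemma \ref{Cap} (capping internal boundary circles by disks realizes Dehn filling), so your overall strategy is sound.

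There is, however, a concrete error in the execution. You take the projection surface to be a $2$-sphere $S$ and assert that $B^4$ collapses (indeed deformation retracts) onto $S$. It does not: collapsing preserves simple homotopy type, and $B^4$ is contractible while $S^2$ is not, since $H_2(S^2)=\mathbb{Z}$. Equivalently, $\chi(W)=1+k$ for a $k$-component surgery link, whereas your $P$ would have $\chi(P)=2+k$, so $P$ cannot be a spine of $W$ as required by Definition \ref{Def:4Mfd}. The standard fix is to project onto a disk $D'$ (as in Section \ref{2.4}), onto which $B^4$ does collapse, and to dispose of the free outer boundary of $D'$ by collapsing the outermost region of the diagram. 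A second, more minor, imprecision: attaching a disk ``whose boundary follows the projected image of $K_i$'' is not well defined where $\pi(K_i)$ has double points, and placing one strand of each crossing on $S$ itself destroys the vertex local model there. The correct object is the mapping cylinder of $\pi|_L$, with the two strands at each crossing pushed to opposite sides of the projection surface, capped off by the $2$-handle cores attached along $L$ itself; with that formulation the edge and vertex models of Figure \ref{Fig:Local} appear as you describe, and the remainder of your collapse argument goes through.
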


From  Theorem \ref{ManiShad}, every closed, oriented, and connected $3$-manifold has a shadow; however, a simple polyhedron $P$ can be a shadow for different non-homeomorphic manifolds. In order to distinguish the two manifolds, we need the additional topological information known as gleams. We will not go into detail on gleams; however, again one can refer to \cite{Tur94} for a more in-depth discussion. In particular, we will only define the  $\mathbb{Z}_2$-gleam of a region in its simplest case when the closure of the region is embedded in the simple polyhedron, although  it can also be defined otherwise.

\begin{definition}\label{Def:HalfGleam}
Let $D$ be an embedded region in $P$ such that the closure $\overline{D}$ is also embedded with $\partial \overline{D} \subset Sing(P)$.  Now let $N(D)$ be a regular neighborhood of $\partial \overline{D}$ in $P\backslash Int(D)$. If $N(D)$ collapses over a M\"{o}bius strip, then the \emph{$\mathbb{Z}_2$-gleam} of the region $D$ is $1$, otherwise it is $0$. 
\end{definition}

\begin{definition}\label{Def:Gleam}
A $\emph{gleam}$ is a coloring of all regions by $\frac{1}{2} \mathbb{Z}$ where the coloring of a region is an integer if and only if the $\mathbb{Z}_2$-gleam is $0$. A simple polyhedron equipped with a gleam is called a $\emph{shadowed polyhedron}$. 
\end{definition}

The general idea is that thickening our polyhedron to a $4$-manifold  will require additional information on how to glue together  components. The data from the gleams serve this purpose.

 Now in order to consider shadows for links embedded into a $3$-manifold $M$, where $M$ has possible toroidal boundary components, we need to extend our definition by including a coloring of the boundary for non-closed simple polyhedron.

\begin{definition}\label{Def:ColoredBoundary}
A $\emph{boundary-decorated simple polyhedron}$ $P$ is a simple polyhedron where each boundary component of $P$ is labeled as either $\emph{internal}$, $\emph{external}$, or $\emph{false}$. Let us label these subsets as $\partial_{int}(P)$, $\partial_{ext} (P)$, and $\partial_f (P)$, respectively. 
\end{definition}

The boundary-decorated simple polyhedron can be shadows for a pair $(M,L)$ where $M$ is an oriented $3$-manifold with link $L$ embedded in $M$.

\begin{definition}\label{Def:BoundaryShadow}
A boundary-decorated simple polyhedron $P$ is a $\emph{shadow}$ for the pair $(M,L)$ if $W$ collapses onto $P$, $P$ is locally flat, and $(M,L) = (\partial W \backslash IntNbd(\partial_{ext} (P), \partial W) , \partial_{int} (P))$.
\end{definition}

The intuition being that the internal boundaries are the link, the external boundaries correspond to drilling out tori, and the false boundaries are ignored. Similarly to before, we can define a gleam for a boundary-decorated simple polyhedron.

\begin{definition}\label{Def:BoundaryGleam}
A coloring of the regions by half-integers of a boundary-decorated simple polyhedron that does not touch $\partial_f(P) \cup \partial_{ext}(P)$ is called a $\emph{gleam}$. A boundary-decorated simple polyhedron equipped with a gleam is called a \emph{boundary-decorated shadowed polyhedron}. 
\end{definition}

In \cite{Tur94}, Turaev showed that if a simple polyhedron embedded in a compact oriented smooth $4$-manifold $W$ is a shadow of $\partial W$, then there exists an induced gleam of the polyhedron. Conversely, he showed given a  boundary-decorated shadowed polyhedron, there exists a construction of a compact and oriented $4$-manifold $W$ with an embedding $P$ into $W$ where the gleam of $P$ coincides with the induced gleam given by the embedding in $W$. This implies that one way we can study $3$-manifolds is through the use of boundary-decorated shadowed polyhedron. 
We will show a general sketch of Turaev's proof to illustrate how to pass from a boundary-decorated shadowed polyhedron to a $3$-manifold which can be found in \cite{Tur94}.

\begin{thm}[\cite{Tur94}, Section IX.$6.1$]\label{Rec}
From a boundary-decorated shadowed polyhedron $P$, we can construct a pair $(W_P, P)$ where $W_P$ is a smooth, compact, and oriented $4$-manifold with $P$ embedded such that $P$ is a boundary-decorated shadowed polyhedron of the $3$-manifold $\partial W_P$. In addition, the regions are colored such that the induced gleam coincides with the gleams of the  boundary-decorated shadowed polyhedron. 
\end{thm}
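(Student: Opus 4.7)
The approach is to construct $W_P$ by local thickening followed by gluing, with the gleam prescribing the framing of each 2-stratum. As motivation, consider first the toy case where $P$ is a closed oriented surface with no singular set; then the gleam is a single integer $n$, and $W_P$ is the $D^2$-bundle over $P$ of Euler number $n$, whose boundary is the associated circle bundle. This already illustrates how the gleam records the twisting of the normal disk bundle of a region.

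In the general case, I would stratify $P$ into regions, edges, and vertices according to the local models of Figure \ref{Fig:Local}, and thicken each local model to a canonical 4-dimensional neighborhood. A point interior to a region has neighborhood $D^2\times D^2$; a point on an edge, locally the triple book of three half-disks meeting along a line, thickens to the triple book crossed with $D^2$, smoothly embedded in a 4-ball; a vertex, locally a cone on the 1-skeleton of a tetrahedron, thickens to a larger 4-ball containing this cone locally flatly. Boundary-decorated points receive a collar according to their type, arranged so that the internal boundary will sit inside $\partial W_P$ as a link, the external boundary will bound toroidal pieces that can be drilled out, and the false boundary will be absorbed under the gluing.

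These local pieces are then glued along shared edges and regions. Along the interior of an edge the gluing is essentially canonical, and near a vertex it is forced by the local 4-ball structure. The nontrivial choice arises when closing up the normal $D^2$-bundle over each region $\overline{D}$: the local trivializations coming from the neighboring edges and vertices determine a framing of $\partial \overline{D}$, and the gleam of $D$ specifies the integer (or half-integer, in the M\"{o}bius case detected by the $\mathbb{Z}_2$-gleam) by which to twist before capping off. The parity condition in Definition \ref{Def:Gleam} is exactly what is required to make this twisting well-defined, and the boundary decoration restricts which regions actually receive a numerical gleam. Once the gluing is carried out, $W_P$ collapses onto $P$ by radially collapsing each $D^2$-fiber to its core, and the local flatness of $P\subset W_P$ is automatic from the local models; reading off $\partial W_P$ and removing collars of $\partial_{ext}(P)$ then recovers the decorated pair.

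The main obstacle is the consistency of the gluing and the matching of the induced gleam. This reduces to a case analysis on the strata meeting each region $\overline{D}$: one computes the Euler number of the normal $D^2$-bundle over $\overline{D}$ with respect to the framings prescribed by the neighboring local models and verifies that it equals the gleam value $P$ assigned to $D$, with the $\mathbb{Z}_2$-ambiguity precisely absorbed by the parity condition of Definition \ref{Def:Gleam}. Turaev performs this verification in detail in Section IX.$6.1$ of \cite{Tur94}, and the construction is unique up to diffeomorphism of $W_P$ fixing $P$. I would present the construction in this modular form and refer to that source for the essentially combinatorial consistency check.
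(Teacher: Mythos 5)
Your proposal is correct and follows essentially the same route as the paper: thicken the local models (the paper does this in two steps, first to a $3$-manifold $N$ containing $Nbd(Sing(P);P)$ and then to the $[-1,1]$-bundle $W_0$ over $N$), glue the thickened regions to the thickened singular part along annuli or M\"{o}bius bands using the gleam as the framing of the core curves, and handle $\partial P$ by a collar (the paper adjoins $\partial P$ to the singular set of an enlarged polyhedron $\overline{P}$). Both arguments defer the detailed consistency verification to Turaev, Section IX.6.1.
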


\begin{proof}
We will begin with the case when $P$ is a closed simple polyhedron with non-empty connected singular set $Sing(P)$. Notice a neighborhood $S(P)= Nbd(Sing(P);P)$ can be obtained as a series of compositions of the two rightmost local models in Figure \ref{Fig:Local}. We can thicken these components to obtain a $3$-manifold $N$ with $S(P)$ properly embedded in $N$ such that  the gluing of the components is induced from the original simple polyhedron $P$. Now  consider the thickening to a $4$-manifold  $W_0$ that is the $[-1,1]$ bundle over $N$. Note that this $W_0$ is  orientable. 

Now we consider the remaining components of the shadow $P$ which are regions such as the  leftmost local model in Figure \ref{Fig:Local}. Let $R$ be such a component, and let $W_R$ be the thickening to a $4$-manifold in the same way as previously. We now need to consider how the boundaries of $W_R$ and $W_0$ glue to each other. Notice the simple closed curves coming from the intersection of $R \cap \partial (S(P))$. This will be the core of an annulus or a M\"{o}bius band in both the $3$-dimensional thickening of $R$ and $S(P)$. The initial gleam of the shadowed polyhedron $P$ is used as the framing of the core  to identify the components of the annuli and M\"{o}bius bands.

Now we consider the case when $P$ has boundary. For our purposes, the boundaries are copies of $S^1$. Let $\{l_i\}$ be the collection of components of the boundary where $l$ is their union. If we thicken $l$ to the framed link $l \times [-1,1]$ such that $l \times \{0\} = \partial(P)$,  then we obtain a new simple polyhedron $\overline{P}$ where $Sing(\overline{P}) = Sing(P) \cup \partial(P)$. Now by following the same construction as before with $\overline{P}$, we obtain a $4$-manifold $W_{\overline{P}}$ such that $l \times [-1,1] \subset \partial(W_{\overline{P}})$.
\end{proof}


\subsection{Fundamental shadow links}\label{2.2}

The fundamental shadow links are a family of links in connected sums of $S^2 \times S^1$ with well understood hyperbolic volumes. These manifolds are introduced and discussed in-depth by Constantino and Thurston in \cite[Section $3.6$]{ConT08}. We will be using Theorem \ref{Rec} on a particular family of boundary-decorated shadowed polyhedron to obtain these manifolds. As noted in the introduction, these links are ``universal" in the sense that every orientable $3$-manifold with empty or toroidal boundary can be obtained as the complement of a Dehn filling of a fundamental shadow link along some subset of the components of the link.  Again, Conjecture \ref{VolC} for this family of links was proven in \cite{BelDKY}. This was done by utilizing the explicit hyperbolic volume of this family, as well as the relatively simple form of the Turaev-Viro invariant.

We will begin by starting with a closed simple polyhedron $P$ with at least one vertex such that $Sing(P)$ is connected. Now let $S(P)$ be a regular neighborhood of $Sing(P)$ in $P$ such that $S(P)$ is a composition of the  two rightmost local models of Figure \ref{Fig:Local}, and define $P'$ to be the boundary-decorated shadowed polyhedron with $2$-dimensional simple polyhedron $S(P)$ and each boundary component colored as  external. $P'$ is a boundary-decorated shadowed polyhedron of a $3$-manifold $M_{P'}$ with boundary where $\pi: M_{P'} \to P'$ is the collapsing map. We can understand this manifold by looking at the preimage of $P'$ under the collapsing map.  The preimage of the second local model will be a product of a pair of pants with $[-1,1]$, and the preimage of the third local model is a genus $3$ handlebody. These pieces meet along pairs of pants, and the resultant manifold $M_{P'}$ has boundary a union of tori. From \cite{ConT08}, we have the following.

\begin{prop}[\cite{ConT08}, Proposition 3.33]\label{FSLVol}
The manifold $M_{P'}$ can be equipped with a complete hyperbolic metric with volume $2k v_8$ where $k$ is the number of vertices of ${P'}$ and $v_8$ is the volume of a regular ideal hyperbolic octahedron. 
\end{prop}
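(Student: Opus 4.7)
The plan is to construct an explicit complete hyperbolic structure on $M_{P'}$ by decomposing it into $2k$ regular ideal hyperbolic octahedra indexed by the $k$ vertices of $P'$, and then appeal to Mostow--Prasad rigidity for uniqueness.

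First I would unpack the block decomposition of $M_{P'}$ coming from the local structure of $S(P)$. Since $P'=S(P)$ is assembled from the second and third local models of Figure \ref{Fig:Local}, the reconstruction of Theorem \ref{Rec} naturally partitions $M_{P'}$ into \emph{vertex blocks} $B_v$, one per vertex $v$ of $P'$, and \emph{edge blocks} $A_e\cong\Sigma_{0,3}\times I$, one per edge of $P'$. Adjacent blocks meet along three-punctured spheres. After drilling the external boundary circles $\partial_{ext}(P')$ to produce cusps, the boundary components of each $B_v$ split into toroidal cusps (from $\partial_{ext}(P')$) and three-punctured sphere interfaces (where $B_v$ meets its neighboring edge blocks).

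Next I would pin down the geometry of a vertex block. The key assertion is that each $B_v$ admits a hyperbolic structure built from two regular ideal hyperbolic octahedra, glued along a specific tetrahedrally symmetric face-pairing dictated by the cone-on-$K_4$ combinatorics of the third local model, in such a way that the three-punctured sphere interfaces are realized as totally geodesic surfaces made up of pairs of adjacent ideal triangular faces. This assigns hyperbolic volume $2v_8$ to each vertex block and produces the expected toroidal cusp cross-sections coming from $\partial_{ext}(P')$.

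Finally I would assemble the pieces. Because $\Sigma_{0,3}$ carries a unique complete hyperbolic structure, and such a structure must embed totally geodesically in any finite-volume hyperbolic $3$-manifold containing it, the gluings across the three-punctured sphere interfaces are forced to be isometries. The edge blocks $\Sigma_{0,3}\times I$ collapse onto their totally geodesic central fibers and contribute no volume. Summing the $2v_8$ contributions over the $k$ vertex blocks therefore yields a complete finite-volume hyperbolic metric on $M_{P'}$ of volume $2kv_8$, and this is then the unique such structure by Mostow--Prasad rigidity. The main obstacle is the middle step: giving a concrete and clean identification of $B_v$ as the union of two regular ideal octahedra with the correct combinatorial face-pairing prescribed by the local model at a vertex of $S(P)$, and verifying that the shadow-induced gluings on the three-punctured sphere interfaces match the ideal-triangular pairings on the boundaries of the octahedra so that all local pieces fit together into a coherent global hyperbolic structure.
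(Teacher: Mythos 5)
You should first note that the paper does not prove Proposition \ref{FSLVol} at all: it is quoted from \cite{ConT08} (Proposition 3.33), so there is no in-paper argument to compare against. Your outline is in fact the standard argument from that reference --- decompose $M_{P'}$ into one block per vertex of $Sing(P)$, give each block a hyperbolic structure with totally geodesic three-punctured sphere boundary and volume $2v_8$, and use the rigidity of $\Sigma_{0,3}$ (every self-homeomorphism of the thrice-punctured sphere is isotopic to an isometry of its unique complete hyperbolic structure) to glue the blocks isometrically. So the strategy is right, and the edge blocks being volume-free collars is also fine.

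The gap you flag in your own middle step is, however, exactly where all the content of the proposition lives, and as written the proposal does not close it. The clean way to finish is the checkerboard doubling: color the eight faces of a regular ideal octahedron alternately black and white, and double the octahedron across its four white faces. Each black face of one copy is then glued to the corresponding black face of the other copy along their three common edges, producing four totally geodesic thrice-punctured spheres; the six ideal vertices produce the annular/toroidal cusps corresponding to $\partial_{ext}(P')$; and the result is topologically the genus-$3$ handlebody block with the four pairs of pants marked on its boundary, i.e.\ the preimage of the third local model under the collapsing map of Theorem \ref{Rec}. This identification is what you describe as a ``tetrahedrally symmetric face-pairing,'' but it needs to be exhibited, not just posited, since the volume count $2v_8$ per vertex and the geodesity of the interfaces both depend on it. One further caution on your final step: the claim that a three-punctured sphere ``must embed totally geodesically in any finite-volume hyperbolic $3$-manifold containing it'' presupposes that the glued manifold is already hyperbolic, which is what you are trying to prove. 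The correct order is to build the metric by choosing isometric gluings of the already-geodesic block boundaries (possible by the uniqueness of the hyperbolic structure on $\Sigma_{0,3}$ up to isometry), after which completeness and the volume additivity are immediate and Mostow--Prasad rigidity gives uniqueness.
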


Moreover, it can be shown that the manifold $M_{P'}$ can be realized as the complement of a link embedded in a closed, connected, and oriented $3$-manifold. This is constructed from the same simple polyhedron $S(P)$ with different boundary colorings.

\begin{definition}\label{Def:FSL}
Let $P''$ be the complexity $k$ boundary-decorated shadowed polyhedron with $2$-dimensional simple polyhedron $S(P)$ and with boundary components colored as internal. The $4$-manifold $W_{P''}$ obtained from Theorem \ref{Rec} of the boundary-decorated shadowed polyhedron $P''$ collapses onto the  pair
$$(M_{P''}, L_{P''}) = (\partial W_{P''} , \partial_{int} (P'') ) = (\#^{k+1} (S^2 \times S^1), L_{P''}). $$ 
The pair of this link and $3$-manifold is known as the \emph{fundamental shadow links}. Furthermore, the $3$-manifold $M_{P'}$ and the link complement of $L_{P''}$ in the manifold $M_{P''}$ are homeomorphic. 
\end{definition}

\begin{figure}
\labellist
\pinlabel \small{Vertex} at 27 5
\endlabellist
\centering
\includegraphics{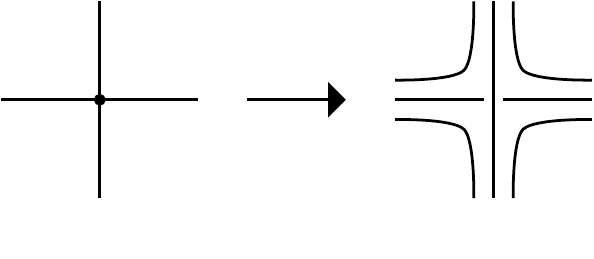}
\caption{Replacing vertices of the graph.}
\label{Fig:Graph}
\end{figure}

Although these fundamental shadow links are embedded in connected sums of $S^2 \times S^1$, they have a description in terms of $0$-surgeries on a set of $(k+1)$  unknotted link components of a link in $S^3$ as noted in Proposition $3.35$ \cite{ConT08}. This can be done through the following procedure:

\begin{itemize}
\item Immerse the simple polyhedron $S(P)$ into $S^3$.
\item Consider a maximal tree of the singular set $Sing(P)$.
\item Encircle with $0$-framed unknotted components all triples of strands from $\partial (S(P))$ running over edges not belonging to the tree $T$.
\item The union of the $0$-framed unknotted components and the strands from $\partial (S(P))$ form a surgery presentation in $S^3$ of a fundamental shadow link.
\end{itemize}
 
 Since the singular set $Sing(P)$ is a $4$-valent graph, we can also construct fundamental shadow links  as follows where the \emph{complexity} of a graph is the number of vertices:
 
\begin{itemize}
 \item We consider $G$ to be a planar $4$-valent graph of complexity $k$ embedded in $S^3$.
 \item As before, we let $T$ be a maximal tree of the graph $G$.
 \item Replace each edge by any $3$-braid, replace  each vertex by the $6$ strands  in Figure \ref{Fig:Graph} or its mirror, and encircle each braid corresponding to an edge $G\backslash T$ by a $0$-framed unknotted link.
 \item The union of the strands from the $3$-braid and the $0$-framed unknotted link again give us a surgery presentation in $S^3$.  
\end{itemize}

For a given planar $4$-valent graph $G$, there is not a unique associated fundamental shadow link. Depending on which $3$-braids you replace each edge with, we may end with differing non-homeomorphic fundamental shadow link complements.

\begin{remark}\label{Rem:Immersed}
In constructing a fundamental shadow link from a planar $4$-valent  graph, if we exclude the $0$-framed unknotted components, then there exists a simple polyhedron immersed in $S^3$ with boundary the  components of our link. This can be viewed as the immersed simple polyhedron $S(P)$ obtained from the first surgery presentation construction of the shadow of a fundamental shadow link.
\end{remark}


\subsection{Stein factorization of stable maps of links}\label{2.3}
In \cite{ConT08}, Constantino and Thurston show that given a $3$-manifold $M$ and a map $f:M \to \mathbb{R}^2$, one can recover a boundary-decorated shadowed polyhedron for $M$ through the Stein factorization of the map $f$. We will introduce this factorization and later see how it relates to links embedded in $S^3$. 

As a brief summary, the Stein factorization of a stable map $f=g \circ h$ decomposes $f$ into a map $h$ of connected fibers and a map $g$ which is finite-to-one. Roughly speaking, $f$ can be decomposed into maps $g: W_f \to \mathbb{R}^2$ and $h: M \to W_f$ where $W_f$ is a $2$-dimensional polyhedron. In our case, $W_f$ will be a simple polyhedron where the boundary, edges, and vertices of the simple polyhedron correspond to the singularities of the map $f$. At a generic point of  $W_f$, the preimage under the map $h$  is a circle bundle inside a $3$-manifold which is analogous to the $3$-thickening of a shadow. Along these circle bundles at the generic points, we can glue disks which is similar to the $4$-thickening process.  This results in a   $4$-manifold which collapses onto the simple polyhedron along the disk with boundary the $3$-manifold $M$. This is close to a boundary-decorated shadowed polyhedron of a manifold, the difference being that the topological data of the gleam is not explicitly stated on the simple polyhedron. 
We will now define the Stein surface with more information on these definitions being discussed by Levine in \cite{Lev85} and Saeki in \cite{Sae96}.

\begin{definition}\label{Def:StableMap}
Let $M$ be a closed $3$-manifold and $f$ a smooth map from $M$ to an orientable $2$-manifold $\Sigma$. Suppose $f$ also satisfies the following conditions.
For each $p \in M$ and $f(p)$, there exist local coordinates centered at $p$ such that $f$ can be described in one of the following ways:
\begin{enumerate}[i).]
\item $(u,x,y) \mapsto  (u,x)$.
\item $(u, x, y) \mapsto (u, x^2 + y^2)$.
\item $(u, x, y) \mapsto (u, x^2 - y^2)$.
\item $(u, x, y) \mapsto (u, y^2 + ux - x^3)$.
\end{enumerate} 
We say $p$ is a \emph{regular point}, a \emph{definite fold point}, an \emph{indefinite fold point}, or a \emph{cusp point} for the cases $(i), (ii), (iii), (iv)$, respectively. In addition, if $S(f)$ are the singular points of $f$, then suppose $f$ also satisfies these global conditions:
\begin{enumerate}
\item[v).] $f^{-1} \circ f(p) \cap S(f) = \{ p \}$ for a cusp point.
\item[vi).] The restriction of $f$ to the singular points which are not cusp points is an immersion with normal crossings. 
\end{enumerate}
We say that such a map $f$ is a \emph{stable map}. 
\end{definition}

\begin{definition}\label{Def:SMap}
Given a compact and orientable $3$-manifold $M$ with (possibly empty) boundary of tori. A smooth map $f$ of $M$ into an orientable $2$-manifold $\Sigma$ is called an \emph{S-map} if the restriction to the interior of $M$ is a stable map, and  each $p \in \partial M$ can be described locally by the map $(u,x,y) \mapsto (u,x)$ where $x=0$ corresponds to points on the boundary. 
\end{definition}

\begin{definition}\label{Def:SteinSurface}
For a given $S$-map, consider the space $W_f$ which is the quotient of  the space $M$ along the fibers of $f$ with map $h : M \to W_f$. 
Define the map $g: W_f \to \Sigma$ such that $f = g\circ h$. This factorization of the map $f$ is known as the \emph{Stein factorization} with corresponding \emph{Stein surface} $W_f$.  
\end{definition}

We can further extend this definition to include links in a manifold $M$.

\begin{definition}\label{Def:LinkStableMap}
Let $M$ be a compact orientable $3$-manifold $M$ with (possibly empty) boundary of tori and a (possibly empty) link $L$. Let $f$ be an $S$-map of the manifold $M$ into an orientable $2$-manifold $\Sigma$. We say that $f$ is an $S$-map of the pair $(M,L)$ if the link $L$ is contained in the set of definite points of $f$. When $M$ is a closed $3$-manifold, the map $f$ is called a \emph{stable map} of the pair $(M,L)$. 
\end{definition}


\subsection{Stein surfaces and shadows of $3$-manifolds}\label{2.4}

As stated before, the Stein surfaces of a manifold $M$ given by a Stein factorization are closely related to the shadow of  $(M,L)$. Let us construct a boundary-decorated shadowed polyhedron for  the specific case $(S^3, L)$ where a similar example  can be found in Section $3.2$ of \cite{ConT08}. We will see that  the boundary-decorated shadowed polyhedron is equivalent to the Stein surface for the pair $(S^3,L)$.

 Let $D$ be a flat oriented disk in the $4$-ball $B^4$, and let $D'$ be the closure of $D$ minus a neighborhood of its boundary in $D$. Now let $\pi: S^3 \to D$ be the projection induced from collapsing $B^4$ onto the disk $D$. We  can assume the preimage of $D'$ under the map $\pi$ is a solid torus $T$, and we can define $T' = S^3 \backslash Int (T)$. Now given an $n$-component link $L$, we can isotope $L$ such that it is contained in $D' \times [-\epsilon, \epsilon]$ for sufficiently small number $\epsilon$ with $\pi |_T$ generic with respect to the link $L$. 

Now the projection $\pi(L)$ onto the closed disk $D'$ with crossing information gives a diagram $D_L$ of the link. Consider the mapping cylinder obtained from the quotient 

$$P'_L = (L \times [0,1] \sqcup D')/ ((x,0) \sim \pi(x))$$
where we decorate the components of $L \times \{1\}$ with the internal coloring. Now define $P_L$ to be the boundary-decorated shadowed polyhedron obtained from collapsing the region touching the remaining uncolored boundary component  onto the components of $\pi(L)$. There is a choice of gleam of the regions not touching $\partial_{int} (P_L)$ determined by the simple polyhedron of  $P_L$ such that it is a boundary-decorated shadowed polyhedron for the pair $(S^3,L)$. In this paper, the gleam of such a region $R$ is $\frac{V_R}{2}$ where $V_R$ is the number of vertices touching $R$. For more details, see Section $3.2$ in  \cite{ConT08}.

\begin{figure}
\centering
\includegraphics[scale=.9]{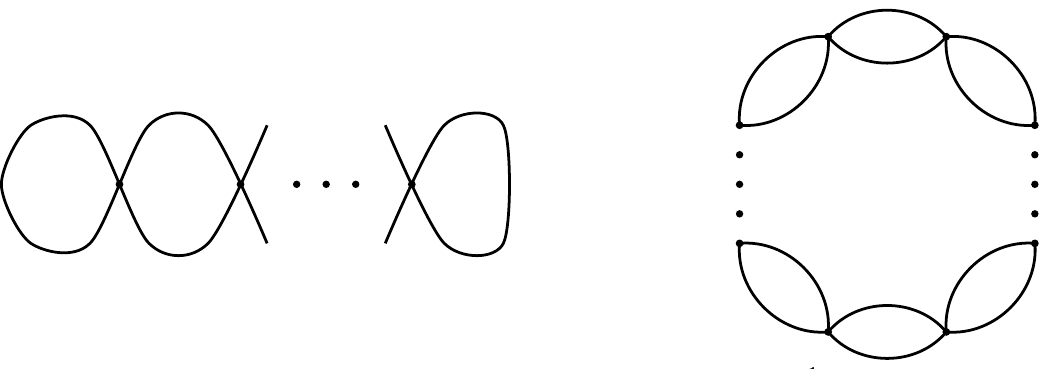}
\caption{Examples of planar $4$-valent  graphs of complexity $k$.}
\label{Fig:Planar}
\end{figure}

From \cite{ConT08}, given the  simple polyhedron of $P_L$, we can construct a stable map $f$ of the pair $(S^3, L)$ to $\mathbb{R}^2$ such that the Stein surface $W_f$ can be identified to the boundary-decorated shadowed polyhedron $P_L$ in a natural way with map $h: S^3 \to W_f$. We will describe the preimages of points on the Stein surface into $S^3$ with  details being discussed in the proof of Theorem $3.5$ in \cite{IshK17} by Ishikawa and Koda. Their argument is for links inside compact and orientable $3$-manifolds with (possibly empty) boundary consisting of tori; however in Section $4$ of \cite{IshK17}, the authors consider the specific case for a link in $S^3$. For our purposes, the most important points are those coming from $P_L \backslash (L\times [0,1])$.

\begin{prop}[\cite{IshK17}, Theorem 3.5]\label{Stein}
Let $L$ be a link in $S^3$. There exists a stable map $f: (S^3, L) \to \mathbb{R}^2$ with Stein surface $W_f$ and Stein factorization $f= g\circ h$ such that the preimage of a point in $W_f$ can be viewed as one of the five following:

\begin{enumerate}[i).]
\item If $x_1$ is a point on $P_L \backslash (L\times [0,1])$, then $h^{-1}(x_1) $ is the regular fiber $\{x_1\} \times S^1 \subset T$. 
\item If $x_2$ is a point on $L\times (0,1)$, then $h^{-1}(x_2)$ is a meridian of the corresponding link component of $L \times \{1\}$.  
\item If $x_3$ is a point on $L \times \{1\}$, then $h^{-1}(x_3)$ is a point on the corresponding link in $S^3$. 
\item Let $x_4$ be a point from the singular set $S(P)$ which are not vertices. Locally around $x_4$, there are regions separated by the singular set. There exists a triple of points, one from each region, such that their preimages are circles which are the boundary to a pair of pants with $h^{-1}(x_4)$ as its spine. 
\item Let $x_5$ be a point from the set of vertices of the singular set $S(P)$. As in case $iv)$, there exist preimages of points from the regions around the vertex which pullback to circles such that they bound a surface with spine $h^{-1}(x_5)$. 
\end{enumerate}
\end{prop}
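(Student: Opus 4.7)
The plan is to build the stable map $f$ by starting from the projection $\pi: S^3 \to D$ already used to construct $P_L$ and then perturbing it so that $L$ becomes the locus of definite fold points, while the double point arcs of the diagram $D_L$ become indefinite fold arcs meeting in cusps at the crossings. Composing $\pi$ with the inclusion $D \hookrightarrow \mathbb{R}^2$ gives a preliminary map whose regular fibers over the interior of $D'$ are the circles $\{x\} \times S^1 \subset T$ coming from the Seifert-like structure on $T$. Away from $L$ and from the singular graph of $P_L$, this projection already realizes case (i), so no perturbation is needed there.

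Next, I would isotope $L$ into $D' \times [-\epsilon,\epsilon]$ with $\pi|_T$ generic with respect to $L$, and modify $f$ in a tubular neighborhood of $L$ so that, in local coordinates $(u,x,y)$ adapted to a strand, it takes the form $(u,x,y) \mapsto (u,x^2+y^2)$. This makes each point of $L$ a definite fold point; a point on $L \times \{1\} \subset P_L$ then pulls back to a single point on $L$ (case (iii)), and a point on the mapping cylinder segment $L \times (0,1)$ pulls back to a small meridional circle linking the corresponding strand (case (ii)). Along the arcs where one strand of $D_L$ passes over another, I would further perturb $f$ to produce the indefinite fold normal form $(u,x,y)\mapsto (u,x^2-y^2)$, and at each crossing of $D_L$ arrange for the two indefinite fold arcs to meet in a cusp modeled by $(u,x,y)\mapsto (u, y^2+ux-x^3)$.

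Once $f$ is assembled, the quotient $W_f$ along connected fibers inherits a stratification matching $P_L$: regions correspond to faces of $D' \setminus \pi(L)$ together with the strips of $L \times [0,1)$, edges correspond to indefinite fold arcs and to $L \times \{1\}$, and vertices correspond to cusps. The tautological map $W_f \to P_L$ is then a homeomorphism of boundary-decorated polyhedra. Reading preimages off the local models gives cases (i)--(iii) immediately; at a non-vertex singular point, the $(u,x^2-y^2)$-model shows that a triple of nearby regular circles from the three adjacent regions cobounds a pair of pants with spine $h^{-1}(x_4)$, yielding case (iv); at a vertex the cusp model glues three such pair-of-pants pieces along their boundary circles to a surface whose spine is $h^{-1}(x_5)$ and whose boundary consists of three regular fibers, yielding case (v).

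The main obstacle is verifying that the local perturbations near indefinite folds and cusps fit together coherently so that the resulting global map is stable in the sense of Definition \ref{Def:StableMap} and so that the induced identification $W_f \cong P_L$ is canonical. This is precisely the content of the proof of Theorem 3.5 in \cite{IshK17}, and I would follow their strategy; in the $S^3$ case, the preliminary Seifert projection $\pi: S^3 \to D$ already carries the fibration used in cases (i) and (ii), so the argument reduces to the local analysis near each crossing of $D_L$ together with the consistency check that the indefinite fold loci piece together into the singular graph $Sing(P_L)$.
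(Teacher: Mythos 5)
The paper does not actually prove Proposition \ref{Stein}; it imports it from \cite{IshK17}, so the only thing to assess is whether your sketch of the underlying construction is sound. Your global strategy (perturb the circle-fibered projection $\pi\colon T\to D'$ so that $L$ becomes the definite fold locus, realize the rest of $Sing(P_L)$ as indefinite folds, and then identify $W_f$ with $P_L$ stratum by stratum) is the right one and matches what Ishikawa--Koda do. But there is a concrete error in your local analysis at the crossings: you propose that at each crossing of $D_L$ ``the two indefinite fold arcs meet in a cusp modeled by $(u,x,y)\mapsto (u,y^2+ux-x^3)$,'' and later that ``vertices correspond to cusps.'' A cusp point is not a meeting of two fold arcs; it is a point on a \emph{single} smooth fold curve at which the fold type transitions from definite to indefinite (compute the Hessian of $y^2+ux-x^3$ along the fold locus $u=3x^2$, $y=0$: it changes signature as $x$ changes sign). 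In the Stein surface a cusp therefore produces a point where an edge of $Sing(W_f)$ terminates on $\partial W_f$ --- not the interior $4$-valent vertex of the third local model in Figure \ref{Fig:Local}. The vertex of $P_L$ sitting over a crossing of $D_L$ is instead realized by condition (vi) of Definition \ref{Def:StableMap}: two \emph{disjoint} indefinite fold arcs in $S^3$ (one associated to the over-strand, one to the under-strand) whose images in $\mathbb{R}^2$ cross transversally, with the two fold points lying in the same connected fiber. If you actually installed cusps at the crossings, the quotient $W_f$ would fail to be homeomorphic to $P_L$ exactly at its vertices, which is where cases (iv) and (v) of the proposition live.

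This error propagates into your description of case (v): the vertex fiber is not obtained by ``gluing three pair-of-pants pieces via the cusp model'' with three regular boundary fibers. Over a crossing the fiber $\{x\}\times S^1$ is pinched twice (once near each strand), so $h^{-1}(x_5)$ is a graph of Euler characteristic $-2$, and the nearby regular fibers bounding the corresponding surface are drawn from the six regions adjacent to the vertex (four quadrants of $D'\setminus\pi(L)$ and two mapping-cylinder flaps), consistent with the vertex model of Figure \ref{Fig:Local}. A secondary imprecision: the indefinite fold locus projects onto \emph{all} of $\pi(L)$, not just onto arcs near crossings --- every non-double point of $\pi(L)$ is an edge point of $Sing(P_L)$ with three adjacent regions (two disk regions and one flap), which is exactly what case (iv) encodes. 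With the crossings handled by normal crossings of folds rather than cusps, and the indefinite folds running along all of $\pi(L)$, the rest of your outline does reduce correctly to the local verifications carried out in \cite{IshK17}.
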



\section{Families of links in $S^3$ satisfying  Conjecture \ref{VolC}}\label{3}

We will construct examples of links in $S^3$ with complements homeomorphic to the complements of fundamental shadow links. Since Conjecture \ref{VolC} has been proven for the latter, this gives a topological proof of   Conjecture \ref{VolC} for the links in $S^3$.

\begin{thm}\label{Families}
Given  an integer $k \geq 1$, let $L_k$, $J_k$, and $K_k$ be the links in $S^3$ shown in Figure \ref{Fig:Families}. The complement of each $L_k$, $J_k$, and $K_k$ is hyperbolic
with volume $2kv_8$ and satisfies Conjecture \ref{VolC}.
\end{thm}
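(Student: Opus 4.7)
The plan is to show that each of the complements $S^3\setminus L_k$, $S^3\setminus J_k$, and $S^3\setminus K_k$ is homeomorphic to the complement of a complexity-$k$ fundamental shadow link. Once such a homeomorphism is established, hyperbolicity and the volume $2kv_8$ follow from Proposition \ref{FSLVol}, and Conjecture \ref{VolC} follows from the theorem of \cite{BelDKY}, since $Vol(M)$ (by Mostow-Prasad rigidity) and $TV_q(M)$ are homeomorphism invariants.

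First I would construct, for each of the three families, an explicit boundary-decorated shadowed polyhedron realizing the pair $(S^3, L_k)$ using the mapping cylinder recipe of Section \ref{2.4}: project the diagram in Figure \ref{Fig:Families} onto a disk, form the mapping cylinder on the resulting link diagram, and decorate the cylindrical boundary components as internal. The diagrams on the right of Figure \ref{Fig:Families} are engineered so that, after collapsing the unique region not adjacent to the link, the resulting singular set is a planar $4$-valent graph $G_k$ of complexity exactly $k$, and the $0$-labeled unknotted components together with the $(k+1)$-twist regions correspond to the braided-edge presentation of the fundamental shadow link associated to $G_k$ as described in Section \ref{2.2} and Remark \ref{Rem:Immersed}. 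Using Proposition \ref{Stein}, the preimage under the Stein map $h$ of each region bordering a $0$-labeled unknotted component is a solid torus in $S^3\setminus L_k$; viewing this configuration as the surgery presentation in $S^3$ of a link in $\#^{k+1}(S^2\times S^1)$ then shows that $S^3\setminus L_k$ is homeomorphic to $\#^{k+1}(S^2\times S^1)\setminus L_{P''}=M_{P'}$ from Definition \ref{Def:FSL}, and analogously for $J_k$ and $K_k$. The three families differ only in the choice of graph $G_k$, so the same scheme applies uniformly.

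The main obstacle I expect is the combinatorial verification that the braids and $(k+1)$-twist regions shown in Figure \ref{Fig:Families} actually produce a $4$-valent graph $G_k$ of the correct complexity, with admissible half-integer gleams in the sense of Definition \ref{Def:Gleam} that agree with the fundamental shadow link polyhedron $P''$, and with internal boundary whose component count matches the parity-dependent totals in the caption ($k+2$ when $k$ is odd and $k+3$ when $k$ is even for $L_k$, and the stated counts for $J_k$ and $K_k$). Tracing how strands cycle through the twist regions as a function of the parity of $k$, and confirming for each family separately that the induced gleams are those forced by the braided-edge construction, is where most of the bookkeeping lies; once settled, the appeal to Proposition \ref{FSLVol} and \cite{BelDKY} closes the argument immediately.
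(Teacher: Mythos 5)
Your overall strategy is the paper's: identify each of $S^3\setminus L_k$, $S^3\setminus J_k$, $S^3\setminus K_k$ with the complement of a complexity-$k$ fundamental shadow link by matching shadows/Stein surfaces, and then quote Proposition \ref{FSLVol} and \cite{BelDKY}. But the first concrete step as you describe it does not work. The mapping-cylinder shadow $P_L$ of Section \ref{2.4}, applied to the full diagram of $L_k$, has one vertex for each crossing of that diagram, so its complexity is $5k+6$, not $k$; collapsing the region meeting the outermost boundary component is already part of the definition of $P_L$ and does not change the vertex count. Consequently the singular set you obtain is not a complexity-$k$ $4$-valent graph and cannot be matched with the fundamental shadow link polyhedron $P''$ without a substantial (and unaddressed) sequence of shadow moves. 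Relatedly, decorating every cylindrical boundary component as internal yields a shadow of the pair $(S^3,L_k)$, whereas the fundamental shadow link complement is carried by the polyhedron $S(P)$ with \emph{all} boundary colored external; these colorings have to be tracked, not just the underlying polyhedron.

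The paper runs the correspondence in the opposite direction, which avoids this complexity mismatch. One starts from the complexity-$k$ polyhedron $S(P)$ of the fundamental shadow link, colors the innermost $k+1$ boundary circles internal and the rest external, and caps the internal ones with disks via Lemma \ref{Cap}, choosing framings so that each new region $R_j$ receives gleam $\frac{V_{R_j}}{2}$. The result is a shadow of the form $P_L$ for the \emph{sublink} $L\subset L_k$ obtained by deleting the $k+1$ unknotted augmentation components $L_0$; this sublink carries a $k$-crossing diagram, so the complexities agree. Proposition \ref{Stein} then identifies the components of $L_0$ as the preimages under the Stein map $h$ of the centers $p_j$ of the capping disks --- unknots parallel to the $z$-axis, each forming a Hopf link with a component of $L$ as in Remark \ref{Rem:PropOfLink} --- and removing the corresponding $2$-handles drills $L_0$ back out, returning the all-external shadow $P''$. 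Hence $S^3\setminus(L\sqcup L_0)=S^3\setminus L_k$ is homeomorphic to the fundamental shadow link complement, and Proposition \ref{FSLVol} together with \cite{BelDKY} finishes as you say. If you prefer your direction (diagram $\to$ shadow), you must form the mapping cylinder only over $L_k\setminus L_0$ and realize the $L_0$ components as drilled cores of Stein-fiber solid tori, not as internal boundary annuli of the mapping cylinder.
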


In order to prove Theorem \ref{Families}, we will be using the following.

\begin{lem}[\cite{ConT08}, Lemma $3.25$]\label{Cap}
Let $P$ be a boundary-decorated shadowed polyhedron of $(M,L)$ where $L$ is a framed link and $M$ is an oriented $3$-manifold. Then the manifold $M'$ obtained from Dehn filling $L$ has a shadow $P'$ given by capping components of $\partial P$ by disks. 
\end{lem}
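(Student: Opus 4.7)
The plan is to lift the capping operation from the $2$-dimensional polyhedron $P$ to the $4$-dimensional thickening produced by Theorem \ref{Rec}, where capping a boundary circle by a disk will correspond to attaching a $2$-handle along a component of $L$. Reading the new boundary $3$-manifold should then recover exactly the Dehn filling $M'$.

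First I would apply Theorem \ref{Rec} to obtain a compact oriented smooth $4$-manifold $W$ in which $P$ embeds as a shadow, so that $W$ collapses onto $P$ and, after deleting an open tubular neighborhood of the external boundary pieces from $\partial W$, one recovers $(M,L)$ with $L=\partial_{int}P$. The embedding $P\hookrightarrow W$ equips each circle $\partial_i P\subset \partial W$ with a framing coming from the $I$-bundle structure on a regular neighborhood, and by hypothesis this framing agrees (up to the standard identification) with the framing prescribed on $L_i$.

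Next, for each component $L_i$ I would take a $2$-disk $D_i$ and attach a $4$-dimensional $2$-handle $H_i=D^2\times D^2$ to $W$ along $L_i\subset\partial W$, with framing equal to the given framing of $L_i$. Call the resulting $4$-manifold $W'$ and set $P':=P\cup\bigcup_i D_i$, where each $D_i$ is the core of $H_i$. Since every $2$-handle collapses onto its core and $W$ already collapses onto $P$, the composite collapse shows that $W'$ collapses onto $P'$, and local flatness is preserved because the cores are attached along circles transverse to the sheet structure of $P$ near $\partial_i P$. The gleam on the new region $D_i$ is dictated by the self-intersection of the $2$-handle's core in $W'$ and by construction coincides with the framing coefficient of $L_i$, so $P'$ is a genuine boundary-decorated shadowed polyhedron.

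Finally I would identify $\partial W'$: attaching $2$-handles along a framed link in a boundary $3$-manifold performs Dehn surgery on that boundary with the same framings, and this operation is disjoint from (hence does not alter) the neighborhoods of $\partial_{ext}P$ used to excise external boundary. Therefore $\partial W'$, with the external neighborhoods removed, is precisely $M'$, which exhibits $P'$ as a shadow of $M'$ in the sense of Definition \ref{Def:3Mfd}. The main delicate step is the bookkeeping in the third paragraph: one must verify carefully that the framing of $L_i$ induced by the embedding $P\hookrightarrow W$ (equivalently, read off from the gleams of the regions of $P$ adjacent to $\partial_i P$) matches the framing on $L_i$ used to define $M'$, so that the $2$-handle attachment really implements the desired Dehn filling rather than a shifted surgery; this reduces to a local check near a single internal boundary circle using the explicit reconstruction in the proof of Theorem \ref{Rec}.
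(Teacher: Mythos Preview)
Your proposal is correct and follows essentially the same approach as the paper: both arguments realize the Dehn filling as the attachment of $4$-dimensional $2$-handles $B^2\times B^2$ to the $4$-thickening $W$ of $P$ along the framed components of $L=\partial_{int}P$, and then observe that each $2$-handle collapses onto its core disk so that $W'$ collapses onto $P'=P\cup\bigcup_i D_i$ with the gleam on $D_i$ determined by the framing. Your write-up is more explicit about local flatness and the framing/gleam bookkeeping than the paper's sketch, but the underlying idea is identical.
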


\begin{figure}
\labellist
\pinlabel \small{Shadow $P$} at 42 0
\pinlabel \small{Shadow $P'$} at 163 0
\pinlabel \small{$S^3 \backslash L$} at 286 0
\pinlabel \small{$S^3 \backslash (L \sqcup L_0$)} at 407 0
\pinlabel \small{$p_1$} at 145 50
\pinlabel \small{$p_2$} at 185 50
\pinlabel \small{$\infty$} at 265 93
\pinlabel \small{$\infty$} at 306 93
\endlabellist
\centering
\includegraphics[scale=.9]{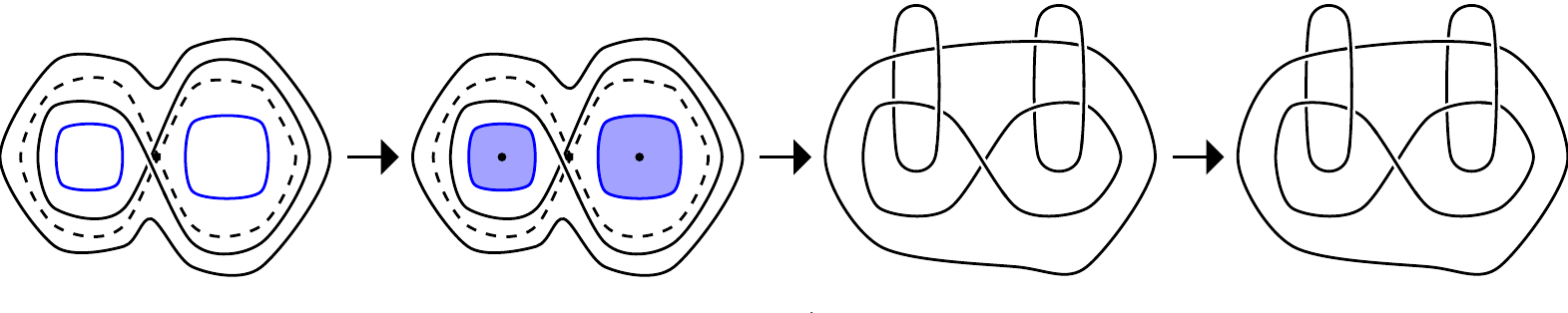}
\caption{On the leftmost diagram, we have the shadow $P$ with $Sing(P)$ the dashed lines,  $\partial_{int}(P)$ colored blue, and $\partial_{ext}(P)$ colored black. We cap $\partial_{int}(P)$ with  disks to obtain regions $R_j$ shaded blue, and we mark their centers with $p_1$ and $p_2$ to recover a new shadow $P'$. In this case, the gleams of the new regions are both $\frac{1}{2}$. $P'$ is the shadow  of a link complement in $S^3$ such that the preimages of the points $p_1$ and $p_2$ are  the cores of the  $\infty$-framed link in $S^3$. Drilling out the $\infty$-framed link gives a link  complement in $S^3$  homeomorphic  to the  complement of the original fundamental shadow link.}
\label{Fig:ShadowEx}
\end{figure}

\begin{proof}
Let $P$ be a boundary-decorated shadowed polyhedron of a $4$-dimensional manifold $W$ with  $\partial_{int} (P)= L$  a link. Performing Dehn filling along $L$ in the manifold $\partial W$ corresponds to gluing  $2$-handles $B^2 \times B^2$ to the manifold $W$. Without loss of generality, we can assume the copies of $B^2 \times B^2$ collapse onto   disks $B^2 \times \{pt\}$ such that the boundary of each disk $S^1 \times \{pt\}$ is a component of $\partial_{int} (P)$. This gives a new simple polyhedron $P'$  which is the shadow of a manifold $M'$ where the gleam is determined by the framing of the link.
\end{proof}

From  Lemma \ref{Cap}, we can construct families of links in $S^3$ which satisfy   Conjecture \ref{VolC}. We will proceed with the proof of Theorem \ref{Families}.

\begin{proof}[Proof of Theorem \ref{Families}]
The families of links given in Figure \ref{Fig:Families} can be obtained as follows. For reference, we demonstrate the case for $K_1$ in Figure \ref{Fig:ShadowEx}.

\begin{figure}
\labellist
\pinlabel \small{$0$} at 49 182
\pinlabel \small{$0$} at 106 188
\pinlabel \small{$0$} at 191 182
\pinlabel \small{$0$} at 253 188
\pinlabel \small{$0$} at 344 182
\pinlabel \small{$0$} at 401 182
\pinlabel \small{$0$} at 49 69
\pinlabel \small{$0$} at 105 69
\pinlabel \small{$0$} at 196 69
\pinlabel \small{$0$} at 253 69
\pinlabel \small{$0$} at 344 69
\pinlabel \small{$0$} at 401 69
\pinlabel \small{FSL 1} at 78 125
\pinlabel \small{FSL 2} at 225 125
\pinlabel \small{FSL 3} at 372 125
\pinlabel \small{FSL 4} at 78 5
\pinlabel \small{FSL 5} at 225 5
\pinlabel \small{FSL 6} at 372 5
\endlabellist
\centering
\includegraphics[scale=.8]{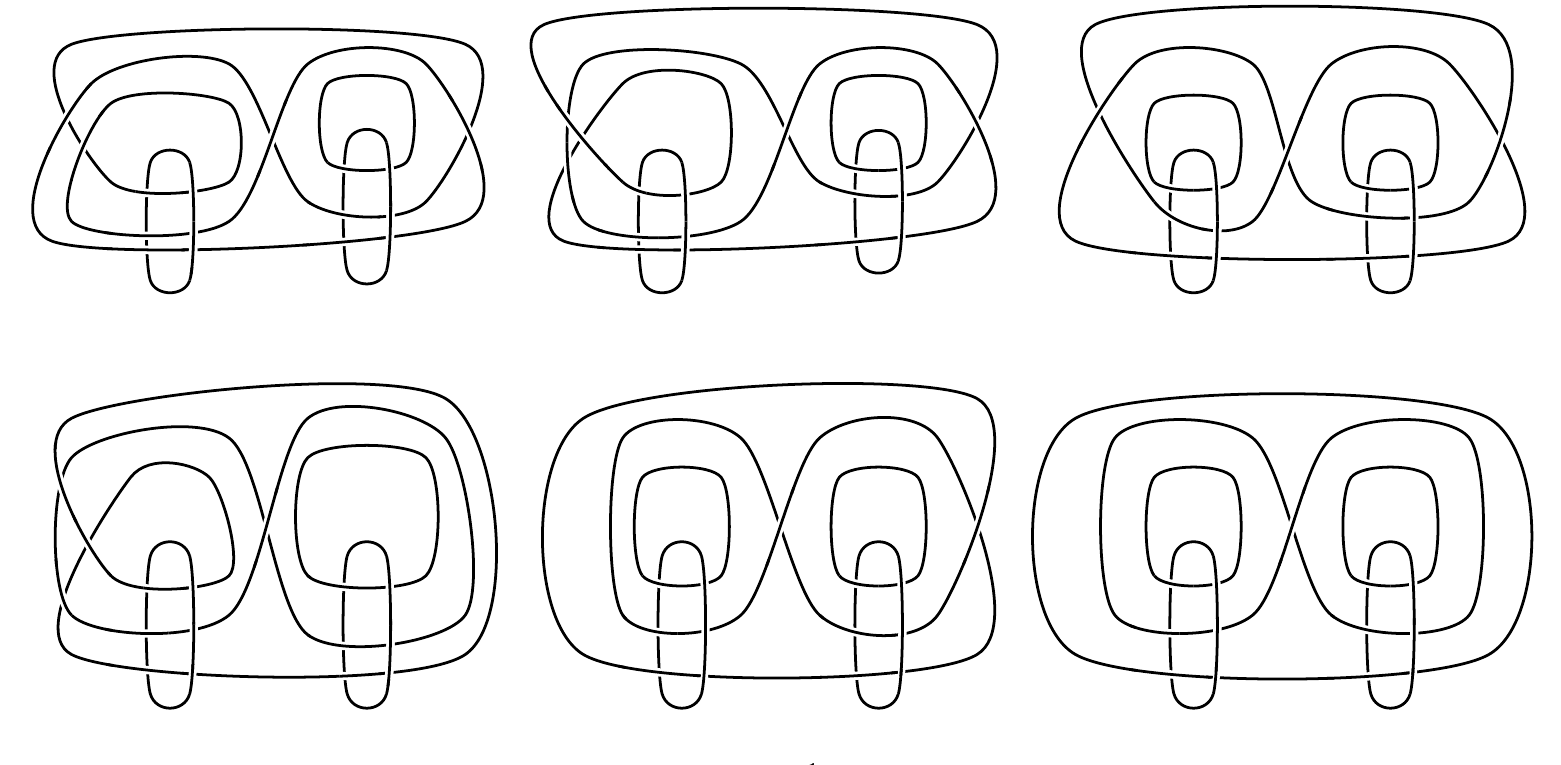}
\caption{The six fundamental shadow links with complements homeomorphic to the links up to $11$-crossings. Note that FSL 3, FSL 5 and FSL 6 correspond to the links $L_1$, $J_1$, and $K_1$ in Figure \ref{Fig:Families}, respectively.}
\label{Fig:SixEx}
\end{figure}

\begin{itemize}
\item From the construction of shadows from graphs in Section \ref{2.2}, let $S(P)$ be the simple polyhedron of the fundamental shadow link of complexity $k$ arising from the planar $4$-valent graph on the left of Figure \ref{Fig:Planar} which corresponds to a link in connected sums of $S^2 \times S^1$ on the left side of Figure \ref{Fig:Families}.   Let $P$ be the boundary-decorated shadowed polyhedron obtained from $S(P)$ such that the innermost $(k+1)$-boundary components of $P$  are colored as internal boundary components with the remaining boundaries colored as external boundary components. In the leftmost diagram of Figure \ref{Fig:ShadowEx}, this corresponds to the simple polyhedron with singular set the dashed lines and with boundary the blue and black curves. Here the internal boundary components are colored blue, and the external boundary components are colored black. 

\item Consider the new shadow $P'$ obtained from capping the internal colored boundary components by  disks. Label these new regions as $R_j$. From Lemma \ref{Cap}, this corresponds to performing a Dehn filling along the internal boundary components where the gleam of the resultant region is determined by the framing. We can choose a framing such that the gleams of $R_j$ are $\frac{V_{R_j}}{2}$ where $V_{R_j}$ is the number of vertices touching $R_j$. In the second diagram of Figure \ref{Fig:ShadowEx}, this can be seen by capping the internal colored boundary components with the blue shaded disks with gleams $\frac{1}{2}$. As in Section \ref{2.4},  $P'$ will be the shadowed polyhedron  for the link complement $S^3 \backslash Nbd(L)$ where the link complement comes from coloring the boundary  external as opposed to internal. From Proposition \ref{Stein}, we can choose a  map $f$ with a Stein surface $W_f$ and Stein factorization $f=g \circ h$ such that $W_f$ can be identified with $P'$ where the preimages of points are as described in Proposition \ref{Stein}.

\item Performing the Dehn filling corresponds to gluing copies of the $2$-handles $B^2 \times B^2$ to the $4$-thickening of the initial boundary-decorated shadowed polyhedron $P$. Let $p_j \in R_j$ be the point in the $2$-handle that corresponds to the center of the capped disk. The preimage of the centers of these disks under the  map $h$ corresponds to  cores of  tori. From Proposition \ref{Stein}, these preimages run parallel to the $z$-axis to form an unknotted link $\{p_j\} \times S^1$ embedded in $S^3 \backslash Nbd(L)$ with framing $\infty$ and with core   $L_0$.  This is illustrated in the third diagram of Figure \ref{Fig:ShadowEx}.

\item We now remove the $2$-handles which contain the framed link  leaving additional boundary tori.  The resultant manifold is ${S^3 \backslash Nbd(L \sqcup L_0)}$  with boundary-decorated shadowed polyhedron $P''$ where $P''$ has $S(P)$ as the simple polyhedron and with boundary components  colored as external. 

\item  As stated in Section \ref{2.2}, $P''$ is a boundary-decorated shadowed polyhedron for the complement of a fundamental shadow link, therefore $L \sqcup L_0 \subset S^3$ is the realization of the fundamental shadow link in $S^3$. This is the link  shown on the rightmost diagram of Figure \ref{Fig:ShadowEx}.  Since the complements of the fundamental shadow links of complexity $k$ satisfy Conjecture \ref{VolC},  the complement of $L \sqcup L_0$ in $S^3$ also satisfies the conjecture and has volume $2kv_8$ by Proposition \ref{FSLVol}.
\end{itemize}

\end{proof}

\begin{remark}\label{Rem:DehnFill}
Notice by performing a Dehn filling of slope $\infty$  on the rightmost of the $(k+1)$ unknotted components bounding twice punctured disks, we can reduce our link from a fundamental shadow link of complexity $k$ to one of complexity $(k-1)$ for the links $J_k$ and $K_k$. 
\end{remark}

\begin{remark} \label{Rem:PropOfLink}
For the links appearing in Theorem \ref{Families}, we used the property that each $0$-framed unknot could be paired with a component of the fundamental shadow link to form a  Hopf link.  In the  manifold of connected sums of $S^2 \times S^1$, the fundamental shadow link component of the Hopf link pair is isotopic to the core of the $0$-framed unknot component after the Dehn surgery. This implies that the complement of the fundamental shadow link can be viewed in $S^3$, up to homeomorphism,  by removing the corresponding fundamental shadow link components from the  Hopf links  and by drilling out all the $0$-framed unknot components. This can be easily seen by comparing the left diagrams with the right diagrams of Figure \ref{Fig:Families}.   This process works for any fundamental shadow link where we can pair up each of the $0$-framed unknot components with  fundamental shadow link components so that the resulting Hopf links are disjoint from each other. 

Although the  Hopf links  can be viewed immediately in the families of Theorem \ref{Families} from the maximal tree construction of the fundamental shadow link given in Section \ref{2.2}, we will see that this is generally not the case. As we will see in Theorems \ref{Fibered} and \ref{Sublink}, the Hopf links may not be disjoint from each other and require further anaylsis using Kirby calculus. Because of  this, we chose to utilize the Stein map approach in proving Theorem \ref{Families}. The Stein surface is equipped with a particular  collapsing map which allows for a more natural approach to obtain the  Hopf links when we generalize to the cases in Theorems \ref{Fibered} and \ref{Sublink}. By using the  Stein map, we find that each capped disk corresponds to a Hopf link where the boundary of the disk is the fundamental shadow link component and the preimage of the center of the disk is the $0$-framed unknot component. For this method to work, we need a shadow of a fundamental shadow link that can be augmented by capping with disks to a  shadow of the form $P_L$ as defined in Section \ref{2.4}.  In these cases, the $0$-framed unknots run parallel to the $z$-axis implying all the Hopf links are  disjoint from each other.  
\end{remark}

\subsection{Links up to 11-crossings}\label{3.1}

\begin{figure}
\centering
\includegraphics[scale=.9]{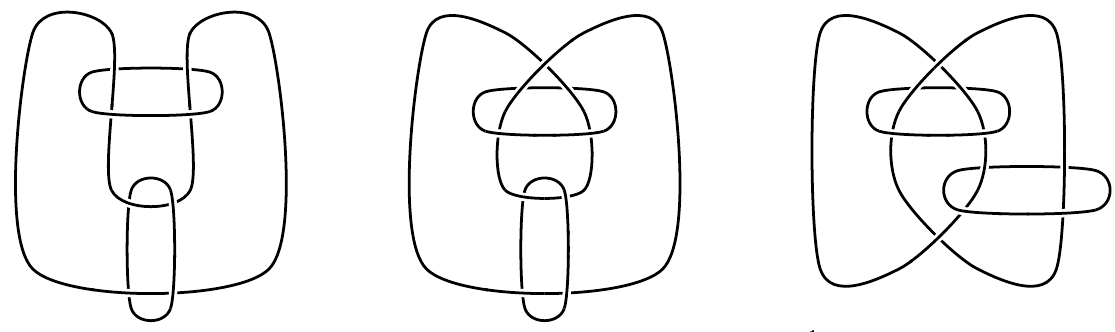}
\caption{The Borromean rings and the Borromean twisted sisters.}
\label{Fig:TwistedSis}
\end{figure}

We will now categorize from \textit{LinkInfo} which octahedral links of volume $2v_8$ up to 11-crossings are complexity one fundamental shadow links. Figure \ref{Fig:SixEx} has the six fundamental shadow links with   homeomorphic complements to the links.  As mentioned in the introduction, the only link \textit{SnapPy} determined was not homeomorphic to a fundamental shadow link was  $L_{10n59}$. This could be due to a numerical error and is not rigorous, and the link  may still be a fundamental shadow link of complexity one.

\begin{table}[h]
\centering
\begin{tabular}{|c|c|}
\hline
 Fundamental Shadow Link  &  LinkInfo Name \\
 \hline
 \hline
 FSL 1 & $L_{10n32}$ \\
 \hline
 FSL 2 & $L_{10n36}$ \\
 \hline
 FSL 3 & $L_{6a4}$, $L_{9n25}$, $L_{11n287}$, $L_{11n378}$ \\
 \hline
 FSL 4 & $L_{10n84}$, $L_{10n87}$ \\
 \hline
 FSL 5 & $L_{8n5}$, $L_{9n26}$, $L_{10n70}$, $L_{11n376}$, $L_{11n385}$ \\
 \hline
 FSL 6 & $L_{8n7}$, $L_{10n97}$, $L_{10n105}$, $L_{10n108}$ \\
 \hline
\end{tabular}
\caption{Links with homeomorphic complements.}
\label{Table:Links}
\end{table}

\begin{thm}\label{TableLinks}
The links  given in Table \ref{Table:Links}  satisfy  Conjecture \ref{VolC} and have volume $2v_8$. 
\end{thm}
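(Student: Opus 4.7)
The plan is to prove Theorem \ref{TableLinks} by computationally verifying, for each link $L$ in Table \ref{Table:Links}, that $S^3 \setminus L$ is homeomorphic to the complement of the corresponding fundamental shadow link FSL $i$ shown in Figure \ref{Fig:SixEx}. Once the homeomorphism is established, the result follows immediately: Turaev--Viro invariants depend only on the topological type of the complement, the six complexity-one fundamental shadow link complements satisfy Conjecture \ref{VolC} by the work of Belletti, Detcherry, Kalfagianni and Yang \cite{BelDKY}, and by Proposition \ref{FSLVol} each such complement carries a complete hyperbolic structure of volume $2v_8$.

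First I would construct triangulations in \textit{SnapPy} of the six fundamental shadow link complements FSL 1--6 directly from their surgery presentations in Figure \ref{Fig:SixEx}: each complement is obtained by drilling out both the strands coming from the $3$-braids and the $0$-framed unknotted components in a complexity-one planar $4$-valent graph diagram, as described in Section \ref{2.2}. Second, for each of the seventeen links in Table \ref{Table:Links}, I would load its complement from the \textit{LinkInfo} database and compute its isometry signature, which is an invariant of the canonical Epstein--Penner decomposition that distinguishes cusped hyperbolic $3$-manifolds up to homeomorphism whenever the computation succeeds. A pairwise comparison between the isometry signatures of the six FSLs and those of the seventeen table entries then produces the matching recorded in Table \ref{Table:Links}. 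Hyperbolicity and the volume value $2v_8$ then transfer from the fundamental shadow link complements via Mostow--Prasad rigidity.

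The main obstacle is rigor rather than ingenuity: \textit{SnapPy}'s isometry checks are based on floating point computations and are not automatically certified, and as the paper already observes, the failure to match $L_{10n59}$ to any FSL is precisely the kind of output that cannot be trusted without further verification. To certify the seventeen positive matches I would invoke \textit{SnapPy}'s verified interval-arithmetic tools, such as \texttt{verify\_hyperbolicity} together with verified canonical cell decompositions, so that the equality of isometry signatures becomes a rigorous statement about the canonical retriangulations. An alternative, entirely topological route is to exhibit explicit Kirby-calculus moves transforming each diagram from \textit{LinkInfo} into the surgery presentation of its corresponding FSL $i$; this is feasible for links with at most eleven crossings but is considerably more laborious than the verified computer check.
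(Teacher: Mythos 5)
Your proposal follows essentially the same route as the paper: realize the six complexity-one fundamental shadow link complements as surgery presentations in \textit{SnapPy}, compare them against the finitely many candidate links of volume approximately $2v_8$ from \textit{LinkInfo}, and transfer hyperbolicity, volume, and Conjecture \ref{VolC} through the resulting homeomorphisms. Your additional remarks on certifying the matches with verified interval arithmetic go slightly beyond what the paper does, but the core argument is identical.
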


\begin{proof}
We will prove  Theorem \ref{TableLinks} by using the following steps:

\begin{itemize}
\item Since the links are complexity one fundamental shadow links, then the volume of the links are $2v_8 \approx 7.3277$. Using \textit{LinkInfo}, we can find all links up to $11$-crossings whose volume are approximately $2v_8$.
\item This gives us the list of links: $L_{6a4}$, $L_{8n5}$, $L_{8n7}$,  $L_{9n25}$,  $L_{L9n26}$, $L_{10n32}$, $L_{10n36}$, $L_{10n59}$, $L_{10n70}$, $L_{10n84}$, $L_{10n87}$, $L_{10n97}$,  $L_{10n105}$, $L_{10n108}$, $L_{11n287}$,  $L_{11n376}$, $L_{11n378}$, and $ L_{11n385}$ where the first number indicates the number of crossings, the letter indicates whether it is alternating or non-alternating, and the second number indicates it is the $n$th link with the same first two parameters.
\item The fundamental shadow links obtained by replacing edges by $3$-braids of single vertex planar $4$-valent graphs are homeomorphic if they induce the same permutation on the $3$-strands, therefore, there are only finitely many fundamental shadow links with volume $2v_8$. 
\item As in Section \ref{2.2}, we can realize the complement of these fundamental shadow links as the complement of a link in $S^3$ where we perform a $0$-framing Dehn surgery on a particular number of its components. As stated earlier, the fundamental shadow links with complements homeomorphic to complements of links up to $11$-crossings are shown in Figure \ref{Fig:SixEx}. We can define each of these manifolds in \textit{SnapPy}, as well as the complements of the possible octahedral links from \textit{LinkInfo}.
\item Finally, we can use \textit{SnapPy} to test the finitely many cases to determine which of these manifolds are homeomorphic.
\end{itemize}

\end{proof}

Now by using Theorem \ref{Families} for $k=1$ and Theorem \ref{TableLinks}, we can give an alternative proof to Conjecture \ref{VolC} with the Borromean rings in addition to the Borromean twisted sisters in Figure \ref{Fig:TwistedSis}. In this sense, the link families of Theorem \ref{Families} are a generalization of the three links.

\begin{cor}\label{Borromean}
The Borromean rings and the Borromean twisted sisters shown in Figure \ref{Fig:TwistedSis} are hyperbolic with volume $2v_8$ and satisfy Conjecture \ref{VolC}. 
\end{cor}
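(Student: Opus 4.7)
The plan is to recognize each of the three links in Figure \ref{Fig:TwistedSis} as the $k=1$ instance of one of the three families $\{L_k\}$, $\{J_k\}$, $\{K_k\}$ of Theorem \ref{Families}, and then invoke that theorem together with Mostow-Prasad rigidity.

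First I would identify the classical Borromean link with $L_1$. The complement of $L_1$ in Figure \ref{Fig:Families} coincides with the complement of the link displayed as FSL 3 in Figure \ref{Fig:SixEx}, which by Theorem \ref{TableLinks} is homeomorphic to the complement of the six-crossing alternating link $L_{6a4}$. Since the Borromean link is $L_{6a4}$, this produces the required identification of complements.

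Next I would identify the two Borromean twisted sisters of Figure \ref{Fig:TwistedSis} with $J_1$ and $K_1$. The most efficient route is to match them with the eight-crossing non-alternating links $L_{8n5}$ and $L_{8n7}$ appearing in Table \ref{Table:Links} under FSL 5 and FSL 6, which are precisely the complexity-one fundamental shadow links underlying the families $\{J_k\}$ and $\{K_k\}$ in Theorem \ref{Families}. The homeomorphisms of complements were already established computationally in the verification behind Theorem \ref{TableLinks}; alternatively one can produce them by a direct planar isotopy between each twisted-sister diagram and the corresponding rightmost picture of Figure \ref{Fig:Families} at $k=1$.

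Once each of the three links in Figure \ref{Fig:TwistedSis} has been matched with one of $L_1$, $J_1$, $K_1$, the conclusion is immediate: Theorem \ref{Families} at $k=1$ provides hyperbolicity, volume $2v_8$, and Conjecture \ref{VolC} for the three model links, and hyperbolicity, hyperbolic volume (by Mostow-Prasad rigidity), and the Turaev-Viro invariants all depend only on the homeomorphism type of the complement. The only non-routine step is the diagrammatic matching of the two twisted sisters to $J_1$ and $K_1$; this is a finite check that has already been carried out in the proof of Theorem \ref{TableLinks}, so I expect no real obstacle beyond bookkeeping.
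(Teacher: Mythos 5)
Your argument is correct and follows essentially the same route as the paper: the paper's proof of Corollary \ref{Borromean} likewise takes $k=1$ in Theorem \ref{Families}, notes that $L_1$, $J_1$, $K_1$ correspond to FSL 3, FSL 5, FSL 6, and then cites Table \ref{Table:Links} (i.e.\ the verification behind Theorem \ref{TableLinks}) to identify their complements with those of the Borromean rings and the twisted sisters. Your extra remarks about Mostow--Prasad rigidity and the specific LinkInfo names $L_{6a4}$, $L_{8n5}$, $L_{8n7}$ are consistent with, and only slightly more explicit than, what the paper does.
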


\begin{proof}
Consider the case $k=1$ for Theorem \ref{Families}. Conjecture \ref{VolC} is known for FSL 3, FSL 5, and FSL 6 as they correspond to the links $L_1$, $J_1$, and $K_1$, respectively.   From Table \ref{Table:Links}, the links $L_1$, $J_1$, and $K_1$  have complements homeomorphic to the Borromean rings and the Borromean twisted sisters shown in Figure \ref{Fig:TwistedSis}, respectively. 
\end{proof}

\begin{remark}\label{Rem:ComplexityTwo}
All of the links in \textit{LinkInfo} have volume strictly less than $6v_8$, therefore up to $11$-crossings, there are only link complements homeomorphic to complements of fundamental shadow links of complexity at most two. As discussed in Theorem \ref{TableLinks}, we found possible links of complexity one; however, we could also check possible links of complexity two in \emph{LinkInfo}. The  candidates of these links are the links $L_{11n387}$ and $L_{11n388}$; however, it is not determined if either of these have complements homeomorphic to the complement of a complexity two fundamental shadow link. 
\end{remark}


\subsection{Fibering over $S^1$}\label{3.2}

\begin{figure}
\labellist
\pinlabel \small{$1$} at 0 82
\pinlabel \small{$i$} at 45 82
\pinlabel \small{$n$} at 91 82
\pinlabel \small{$b$} at 204 43
\pinlabel \small{$b$} at 295 43
\pinlabel \small{Braid axis} at 397 67
\endlabellist
\centering
\includegraphics[scale=.9]{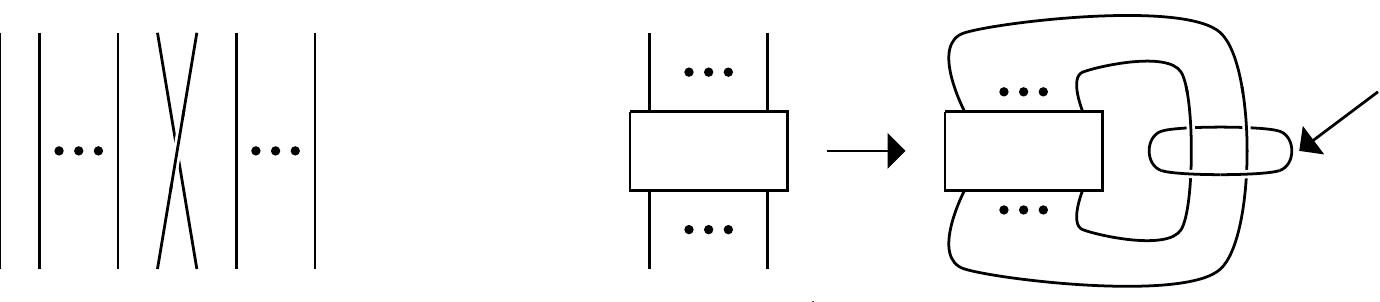}
\caption{ On the left, we have the generator $\sigma_i$ for the braid group $B_n$ where $i \in \{1, \dots, n-1\}$. On the far-right for a given element $b \in B_n$, we have the braided link $\overline{b}$ which is the braid closure with its braid axis. }
\label{Fig:Braid}
\end{figure}

 All the examples of fundamental shadow links so far have been constructed using the planar graphs in the left of Figure \ref{Fig:Planar}; however, we could have also used other planar graphs such as in the right of Figure \ref{Fig:Planar} to construct  examples of links. Using these particular planar graphs, we can construct a family of links  in $S^3$ that fiber over $S^1$ with fiber disks with punctures and with explicit monodromies. 
 
 As recalled in the introduction, 
 let $B_n$ be the braid group of $n$-strands with standard generators $\sigma_i$ for $i \in \{1,\dots, n-1\}$ where $\sigma_i$ is illustrated as in the left diagram of Figure \ref{Fig:Braid}. We have the following definition.
 
 \begin{definition}\label{Def:BraidedLink}
 For a given braid $b \in B_n$, a \emph{braided link}, denoted by $\overline{b}$, is the closure of $b$ in $S^3$ with an additional unknotted component known as the \emph{braid axis} shown on the far-right diagram of Figure \ref{Fig:Braid}.
 \end{definition}

  We will see that for a given $\overline{b}$,
 our link  in $S^3$ fibers over $S^1$ with fiber surface the punctured disk bounded by the braid axis.
  
Recall that the mapping class group of an orientable surface  $MCG(\Sigma)$  is the group of isotopy classes of orientation preserving self-homeomorphisms of $\Sigma$ where the group operation is composition of self-homeomorphisms. Now there exists a surjective homomorphism from the braid group of $n$-strands to the mapping class group of the $n$-punctured disk $MCG(D_n)$
$$\Gamma: B_n \to MCG(D_n)$$
 which sends the  generator $\sigma_i\in B_n$, for $i \in \{1,\dots,n-1\}$, to an element of the mapping class group. This element is represented by a positive half-twist on an arc between the $i$-th puncture to the $(i+1)$-th puncture. The kernel for such a map is generated by the element $(\sigma_1 \sigma_2 \dots \sigma_{n-1})^n$ which corresponds to a full-twist along the boundary. More details on mapping class groups can be found in \cite{FarM12} by Farb and Margalit. 
 
 For a given $\Gamma(b)\in MCG(D_n)$, we can construct the mapping tori
 $M(\Gamma(b)) =  D_n \times [0,1] / (x,0) \sim (\Gamma(b)(x),1)$ with monodromy $\Gamma(b)$. The mapping tori have homeomorphic complements with the braided link $\overline{b}$. 
 
 Now we consider the family of braids $\{b_k\}$ defined in the introduction. We reiterate in the following: 
 
\begin{figure}
\labellist
\pinlabel \small{$B$} at -4 121
\pinlabel \small{$B$} at 225 121
\pinlabel \small{$k$} at 178 125
\pinlabel \small{$k$} at 407 128
\pinlabel \small{$L'$} at 174 227
\pinlabel \small{$L'$} at 404 227
\endlabellist
\centering
\includegraphics[scale=.8]{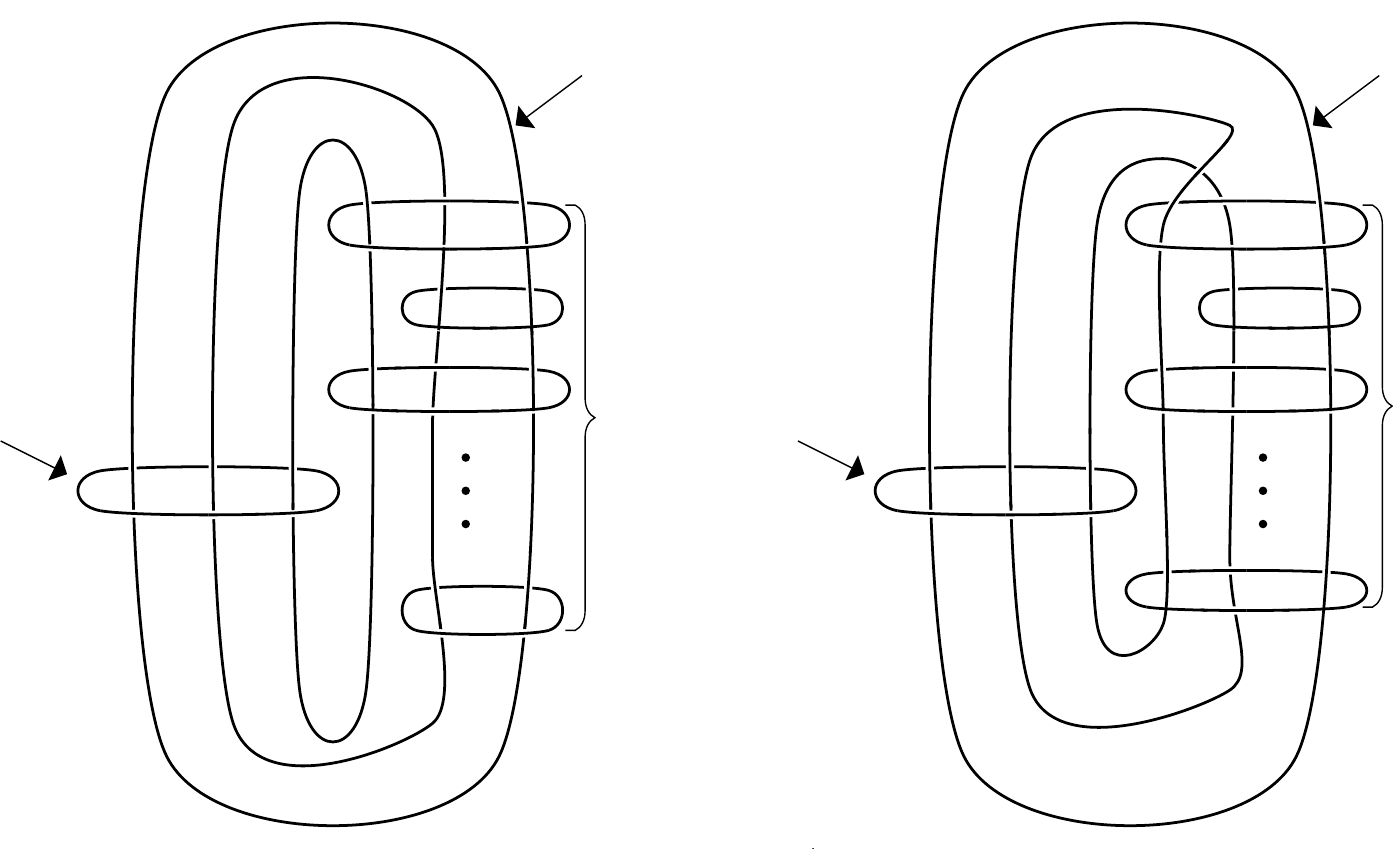}
\caption{Links in $S^3$ for complexity $k$ fundamental shadow links arising from replacing the edges of the rightmost planar graph of Figure \ref{Fig:Planar} with identity $3$-braids.  On the left for when $k$ is even, there are $(k+4)$ components and on the right when $k$ is odd, there are $(k+3)$ components. The braid axis component is labeled as $B$, and the component that bounds a $(k+1)$-punctured disk is labeled $L'$. }
\label{Fig:Fibered}
\end{figure}
 
 \begin{itemize}
\item For $k=1$, define $b_k:=\sigma_1^{-2} \sigma_2^{2} \in B_3$.
\item For $k=2m$, define $b_k\in  B_{k+3}$ by
 $$ b_k:=\prod_{i=1}^{m} (\sigma_{2i+1}\sigma_{2i} \cdots \sigma_3 \sigma_2 \sigma_1^2  \sigma_2^{-1} \sigma_3 \sigma_4 \cdots \sigma_{2i}
 \sigma_{2i+1}) (\sigma_{2i+2} \sigma_{2i+1} \cdots \sigma_3 \sigma_2^2 \sigma_3  \cdots \sigma_{2i+1} \sigma_{2i+2} ).$$
\item For $k=2m-1$, define $b_k\in  B_{k+3}$  by
$$b_k:=b\ \prod_{i=2}^{m} (\sigma_{2i} \sigma_{2i-1} \cdots \sigma_3 \sigma_2^2 \sigma_3 \cdots \sigma_{2i-1} \sigma_{2i}) (\sigma_{2i+1} \sigma_{2i} \cdots \sigma_3 \sigma_2 \sigma_1^2 \sigma_2^{-1} \sigma_3 \sigma_4 \cdots \sigma_{2i} \sigma_{2i+1}), $$
where $b=(\sigma_1)(\sigma_3 \sigma_2 \sigma_1^2 \sigma_2^{-1} \sigma_3).$ 
\end{itemize}
 
 From the relation between the mapping tori of  $MCG(D_n)$ and the braided link, we can construct links in $S^3$ that fiber over $S^1$ with the explicit monodromies $\Gamma(b_k)$. 
 In particular, the complements of the braided links $\overline{b_k}$ have the properties stated in the following.

 \begin{thm}\label{Fibered}
For $k\geq 1$,  let $M_k:=S^3\setminus \overline{b_k}$ denote the complement of the braided link $\overline{b_k}.$ Then
\begin{enumerate}  
\item $M_k$ fibers over  $S^1$  with fibered surface a disk with punctures,
\item $M_k$ is hyperbolic with volume $2kv_8,$ and
\item $M_k$ satisfies  Conjecture \ref{VolC}.
\end{enumerate}
\end{thm}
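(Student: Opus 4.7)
The plan has three parts, corresponding to the three claims. For (1), note that $\overline{b_k}$ is by definition the closure of $b_k$ together with its braid axis, so the punctured disk bounded by the braid axis (with punctures where the braid closure meets it) is the fiber of a fibration of $M_k$ over $S^1$, with monodromy $\Gamma(b_k) \in \mathrm{MCG}(D_n)$. This is the standard fibration of a braided link complement and is immediate from the definition, together with the homomorphism $\Gamma:B_n\to \mathrm{MCG}(D_n)$ recalled earlier in this subsection.

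For (2) and (3), the strategy is to identify $M_k$, up to homeomorphism, with the complement of a fundamental shadow link of complexity $k$; once this is done, Proposition~\ref{FSLVol} yields (2), and the validity of Conjecture~\ref{VolC} for fundamental shadow link complements established in \cite{BelDKY} yields (3), since Turaev--Viro invariants and hyperbolic volume are homeomorphism invariants. To carry out the identification, I would start from the rightmost planar $4$-valent graph of Figure~\ref{Fig:Planar}, which has $k$ vertices, and apply the second construction of Section~\ref{2.2}: replace each vertex by the six-strand model of Figure~\ref{Fig:Graph} (or its mirror), replace each edge by the identity $3$-braid, pick a maximal tree, and encircle the $3$-braids of each non-tree edge by a $0$-framed unknot. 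This produces the surgery presentation in $S^3$ drawn in Figure~\ref{Fig:Fibered}, whose associated $3$-manifold is the complement in $\#^{k+1}(S^2\times S^1)$ of a fundamental shadow link.

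Next, I would invoke the pairing technique of Remark~\ref{Rem:PropOfLink}: each $0$-framed unknot in the surgery diagram pairs with a component of the fundamental shadow link to form a Hopf link, and drilling those Hopf pairs from $S^3$ realizes the fundamental shadow link complement as a genuine link complement in $S^3$. The explicit words defining $b_k$ are engineered so that, after a sequence of handle slides and isotopies in $S^3$, the link of Figure~\ref{Fig:Fibered} becomes precisely the braided link $\overline{b_k}$ together with disjoint $0$-framed unknotted Hopf partners. I would check this by an induction on $k$ with a parity split: the base case $k=1$ is a direct diagrammatic verification yielding $b_1=\sigma_1^{-2}\sigma_2^2$, and each step from $k$ to $k+1$ corresponds to the insertion of the next factor in the recursive products defining $b_{2m}$ and $b_{2m-1}$ given before Theorem~\ref{Fibered}.

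The main obstacle is exactly this Kirby-calculus step. As flagged in Remark~\ref{Rem:PropOfLink}, the Hopf pairs produced by the rightmost graph of Figure~\ref{Fig:Planar} are not disjoint, so one cannot simply read off the equivalence diagrammatically as in Theorem~\ref{Families}; one must track how the handle slides needed to disentangle successive Hopf pairs modify the remaining strands, and verify that the accumulated modifications produce exactly the alternating pattern of $\sigma_i^{\pm1}$ appearing in $b_k$. Once this homeomorphism $M_k\cong \#^{k+1}(S^2\times S^1)\setminus L_{\mathrm{FSL}}$ is in hand, statements (2) and (3) follow at once from Proposition~\ref{FSLVol} and the main theorem of \cite{BelDKY}.
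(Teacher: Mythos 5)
Your overall architecture matches the paper's: realize $M_k$ as the complement of a complexity-$k$ fundamental shadow link built from the cyclic (rightmost) graph of Figure \ref{Fig:Planar} with identity $3$-braids on the edges, deal with the non-disjoint Hopf pairs (the paper does this both via the Stein map and, as you anticipate, via a handleslide in the maximal-tree picture), and then quote Proposition \ref{FSLVol} and \cite{BelDKY}. The gap is in the passage from the link this construction actually produces to the braided link $\overline{b_k}$. Drilling out the Hopf partners yields the link of Figure \ref{Fig:Fibered}, in which the component $B$ bounds only a $3$-punctured disk; this link is \emph{not} ambient isotopic to $\overline{b_k}$, and no sequence of handle slides and isotopies in $S^3$ will make it so. The paper's essential extra move is a full twist along the $(k+1)$-punctured disk bounded by the component $L'$. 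This is a self-homeomorphism of the solid torus $S^3\setminus N(L')$ that does not extend to an ambient isotopy of $S^3$: it changes the isotopy class of the link while preserving the homeomorphism type of the complement. Only after this twist does $B$ bound a $(k+3)$-punctured disk and become a genuine braid axis, and only then does cutting along that disk produce the conjugated factors $\sigma_{2i+1}\cdots\sigma_2\sigma_1^2\sigma_2^{-1}\sigma_3\cdots\sigma_{2i+1}$ appearing in the words $b_k$. Your proposed verification of the braid words would therefore not close up as stated, because the object you are comparing against $\overline{b_k}$ is the untwisted link.

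Two smaller points. First, part (1) is indeed immediate once the identification with $\overline{b_k}$ is established, as you say. Second, the paper does not induct on $k$: for $k\geq 2$ the construction is carried out in one step from the $k$-vertex cyclic graph, with the vertex replacements (Figure \ref{Fig:Graph} versus its mirror) chosen so that the inner closed $2$-braid sublink reduces to the identity when $k$ is even and to a single generator when $k$ is odd; the case $k=1$ is handled separately via the Borromean twisted sister, since the cyclic graph construction degenerates there. An induction from $k$ to $k+1$ would have to contend with the fact that the number of components of that $2$-braid sublink changes with the parity of $k$, so consecutive cases are not related simply by inserting one more factor of $b_k$.
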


\begin{figure}
\labellist
\pinlabel \small{$B$} at 312 56
\pinlabel \small{$B$} at 392 26
\endlabellist
\centering
\includegraphics[scale=.8]{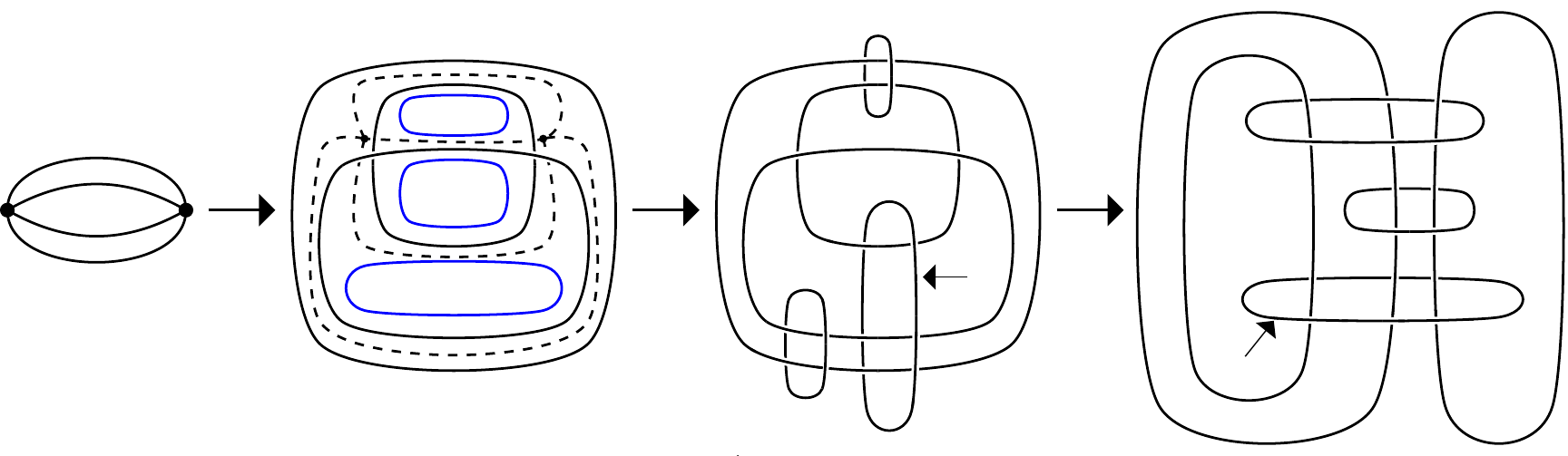}
\caption{On the leftmost diagram, we have a planar $4$-valent graph with complexity $2$. From the graph, we obtain a boundary-decorated shadowed polyhedron where the internal colored boundaries are blue, the external black, and the singular set dashed. As in Theorem \ref{Families}, we cap the blue curves with disks and drill out tori to recover the link in the third diagram where the middle blue curve corresponds to the link component $B$. Through an isotopy, we obtain the link in the fourth diagram.    }
\label{Fig:Cycle}
\end{figure}

\begin{proof}
As noted in Remark \ref{Rem:PropOfLink}, the maximal tree construction of the fundamental shadow link given in Section \ref{2.2} will not result in disjoint Hopf links which are formed from  a $0$-framed unknot   with a fundamental shadow link component. In the case of Theorem \ref{Fibered}, we will see that precisely two of the Hopf links  are not disjoint from each other. By utilizing the Stein map approach,  each capped disk will be used to construct the disjoint Hopf links avoiding the problem arising from the maximal tree approach. Alternatively, we will also demonstrate how to obtain the same disjoint Hopf links  using the maximal tree construction with an application of Kirby calculus.

Consider the case for $k \geq 2$ for the the fundamental shadow link of the planar graph on the right of Figure \ref{Fig:Planar}. As before, we replace each vertex with six strands as described in Section \ref{2.2}, and we choose to replace the edges with the identity $3$-braids to obtain a shadow $P$. In particular, we will replace the vertices by Figure \ref{Fig:Graph} or its mirror in a specific pattern which will allow us to simplify the resulting link. This choice will be made explicit later in the proof. 

Now we color the outermost boundary of the shadow as external, the boundary components with crossing information external, and the remaining boundaries as internal.  From the same construction as in Theorem \ref{Families}, we can cap components of the shadow by disks to obtain a shadow of the form $P_L$ as defined in Section \ref{2.4}. As mentioned in Remark \ref{Rem:PropOfLink}, we will obtain a collection of disjoint Hopf links where one component is a fundamental shadow link component and the other a $0$-framed unknot component. From this construction, we will obtain either the left link for $k$ even or the right link for $k$ odd in Figure \ref{Fig:Fibered}. We will illustrate an example for $k=2$ in Figure \ref{Fig:Cycle}. Because the vertices of the planar graph are arranged cyclically, there exists an inner region  of the modified shadow $P'$ where the corresponding link component  bounds a $3$-punctured disk of the fundamental shadow link in $S^3$. This is the component we label as $B$ in Figure \ref{Fig:Fibered}, and it will eventually be the braid axis for our link.  

\textit{Alternate argument for constructing the link}: Alternatively, we can obtain the link from using the maximal tree construction of Section \ref{2.2} and Kirby calculus. We will again begin with the planar graph on the right of Figure \ref{Fig:Planar}. Note that a  graph of $k$ vertices will have a maximal tree of $(k-1)$ edges. Now we choose our maximal tree $T$ to be  $(k-1)$ of the $k$ edges which bound the innermost region where we denote the excluded edge by $E$. We now follow the maximal tree construction where we replace each edge and vertex as stated previously  in the Stein map approach. We can see that the $0$-framed unknot resulting from the edge $E$ will not be disjoint from one of the other Hopf links. By performing a handleslide  of the $0$-framed unknot corresponding to edge $E$ over the $0$-framed unknot from the other Hopf link, we will arrive to the same surgery presentation that was obtained from the Stein map approach. In particular, we can see that each of the $0$-framed unknots can be isotoped to be parallel to the $z$-axis, thus disjoint from each other. Since each of the Hopf links  are disjoint, we can obtain the link in $S^3$ from the same process mentioned in Remark \ref{Rem:PropOfLink}. This concludes the alternative argument using the maximal tree  approach in constructing the link in $S^3$.

 We will now continue with the proof of the theorem. For $k$ odd, the sublink consisting of the components with crossings given by the vertices of the graph is a knot. In the case when $k$ is even,  the sublink is a two component link. This is due to the sublink being realized as the closure of a $2$-braid where each crossing contributes a generator of the group. We choose to replace the vertices of the graph by Figure \ref{Fig:Graph} or its mirror such that the resulting crossings of the closure of the  $2$-braid sublink alternate between a generator of the group and its inverse. When $k$ is even, we can reduce the $2$-braid to the identity as in Figure \ref{Fig:Cycle}, and when $k$ is odd, we can reduce the $2$-braid to a single generator. This  accounts for the difference between the number of components between the cases odd and even in Figure \ref{Fig:Fibered}.

Now let $L$ be a link of Figure \ref{Fig:Fibered} with link component $L'$ as labeled. $L'$ bounds a $(k+1)$-punctured disk where one of the punctures comes from the component $B$. Now consider the self-homeomorphism obtained by performing a full-twist along the $(k+1)$-punctured disk that $L'$ bounds. Under this self-homeomorphism, we obtain a new link in $S^3$ with complement a manifold $M_k$ where $B$ now bounds a $(k+3)$-punctured disk $D_{k+3}$ as the braid axis. The components of our new link in $S^3$ can now be isotoped to the braided link  $\overline{b_k}$.  Since we began with a fundamental shadow link, our manifold $M_k$ will be hyperbolic with volume $2kv_8$ and satisfy Conjecture \ref{VolC}.

\begin{figure}
\labellist
\pinlabel \small{$L'$} at 82 155
\pinlabel \small{$L'$} at 255 155
\pinlabel \small{$L'$} at 434 177
\pinlabel \small{$L''$} at 384 77
\pinlabel \small{$B$} at 32 30
\pinlabel \small{$B$} at 202 30
\pinlabel \small{$B$} at 383 30
\endlabellist
\centering
\includegraphics[scale=.8]{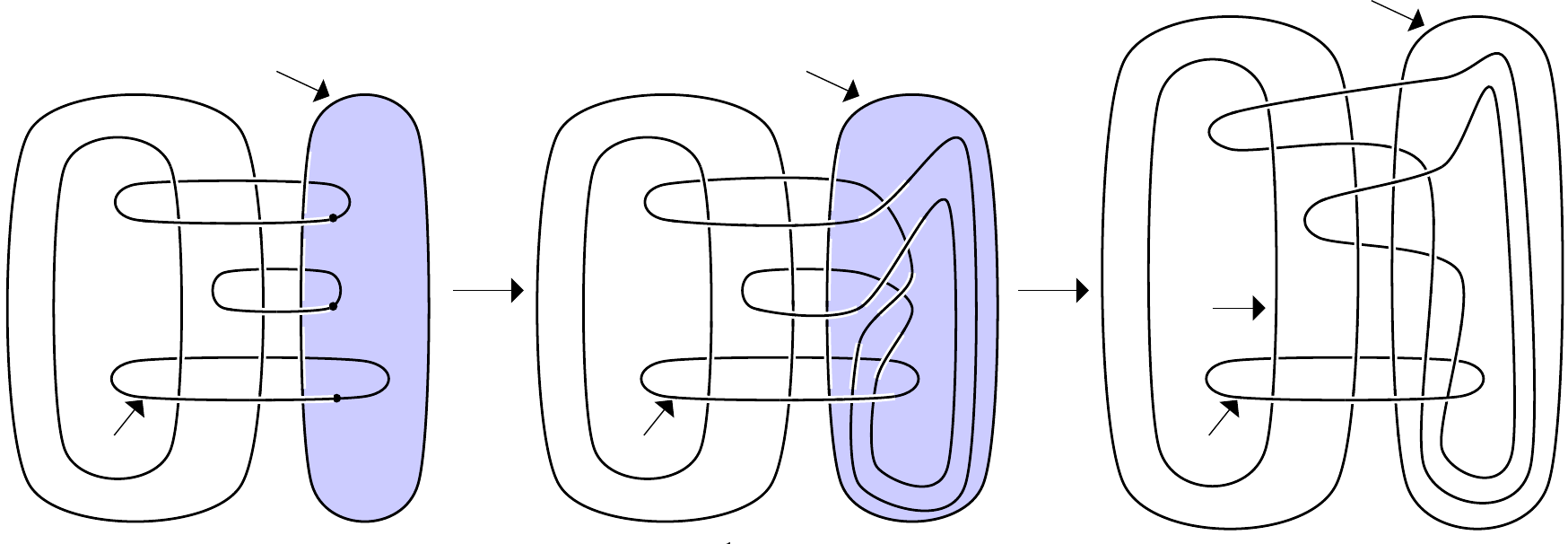}
\caption{The case of obtaining a fibration for $M_2$. Performing a  full-twist on the $3$-punctured disk shaded blue in the leftmost diagram results in the middle diagram. Through an isotopy of the link, we obtain a link fibering over $S^1$ with fiber $D_5$ and monodromy $(\sigma_3\sigma_{2} \sigma_1^{2} \sigma_2^{-1} \sigma_3)(\sigma_4 \sigma_3 \sigma_2^{2} \sigma_3 \sigma_4)$ represented by the last diagram.    }
\label{Fig:FiberedEx}
\end{figure}

If we take the surjective homomorphism $\Gamma$ between $B_{k+3}$ and $Mod(D_{k+3})$, the manifold $M_k$ fibers over $S^1$  as the mapping tori
$M_k = M(\Gamma(b_k))= (D_{k+3} \times [0,1]) / ((x,0)  \sim (\Gamma(b_k)(x), 1)  )$
where the monodromy  is given as the image under $\Gamma$ of the following element in $B_{k+3}$:

$$ \prod_{i=1}^{m} (\sigma_{2i+1}\sigma_{2i} \cdots \sigma_3 \sigma_2 \sigma_1^2  \sigma_2^{-1} \sigma_3 \sigma_4 \cdots \sigma_{2i} \sigma_{2i+1}) (\sigma_{2i+2} \sigma_{2i+1} \cdots \sigma_3 \sigma_2^2 \sigma_3 \cdots \sigma_{2i+1} \sigma_{2i+2} ), $$
when $k=2m$ and 
$$(\sigma_1)(\sigma_3 \sigma_2 \sigma_1^2 \sigma_2^{-1} \sigma_3) \prod_{i=2}^{m} (\sigma_{2i} \sigma_{2i-1} \cdots \sigma_3 \sigma_2^2 \sigma_3 \cdots \sigma_{2i-1} \sigma_{2i}) (\sigma_{2i+1} \sigma_{2i} \cdots \sigma_3 \sigma_2 \sigma_1^2 \sigma_2^{-1} \sigma_3 \sigma_4 \cdots \sigma_{2i} \sigma_{2i+1}), $$
when $k=2m-1$ and $k\neq 1$. Obtaining this fibration of the link in $S^3$ with the given monodromy is straightforward.  We can isotope the braid axis $B$ so that locally we have a copy of $D_{k+3} \times [-\epsilon,\epsilon]$ for some $\epsilon$. Cutting the link along the punctured disk $B$ bounds will show that the braid element $b_k$ can be represented  as above. We show an example of this process for $k=2$ in Figure \ref{Fig:FiberedEx}.

Now let $k=1$ and consider the rightmost Borromean twisted sister in Figure \ref{Fig:TwistedSis}. As shown in Corollary \ref{Borromean}, this is the complement of a fundamental shadow link of complexity one. We can see in Figure \ref{Fig:Borromean} there exists an isotopy of the link such that one of the components is a braid axis that bounds $D_3$ where cutting along the disk results in the braid group element $\sigma_{1}^{-2} \sigma_{2}^{2} \in B_3$. 

Therefore for any $k$, we can find a manifold with volume $2kv_8$ which satisfies the Conjecture \ref{VolC},  fibers over $S^1$ with surface a disk with punctures, and has explicit monodromy. 
\end{proof}

\begin{remark}\label{Rem:ClosedBraid} 
From the links in Theorem \ref{Fibered}, we can construct more examples of links satisfying Conjecture \ref{VolC} by performing full-twists  along the punctured disks the link components bound. Through this process, a new link with possibly different punctured disk as the fibered surface may be obtained, and its monodromy may be computed explicitly similarly to Theorem \ref{Fibered}. We may also remove the braid axis with certain self-homeomorphisms to create a closed braid. For example, we can obtain a link which is the braid closure of the element $$( \sigma_4 \sigma_3 \sigma_2 \sigma_1^2 \sigma_2 \sigma_3^{-1} \sigma_4 )(\sigma_5 \sigma_4 \sigma_3^2 \sigma_4 \sigma_5) (\sigma_1 \sigma_2 \sigma_3 \sigma_4 \sigma_5^2 \sigma_4 \sigma_3 \sigma_2 \sigma_1) \in B_6.$$
   This can be seen by performing a full-twist along the component labeled $L''$ in the rightmost diagram of  Figure \ref{Fig:FiberedEx}.
   \end{remark}

\begin{remark}\label{Rem:nBraid}
As mentioned in the proof, the  links from  Theorem \ref{Fibered} contain a closed $2$-braid as a sublink. We will see in Theorem \ref{Sublink} that we can also construct  links that contain closed $n$-braids as a sublink of a fundamental shadow link in $S^3$. This will  allow us to find other manifolds fibering over $S^1$ satisfying Conjecture \ref{VolC}. In particular for Corollary \ref{Even}, we will find a family of  links obtained from fundamental shadow links which contain a closed $3$-braid. These specific  links  can be realized as a braided link from an element of the pure braid group. In addition with the braided links in Theorem \ref{Fibered}, we can create explicit pseudo-Anosov elements in the mapping class group of a genus $0$ surface with $n$-boundary components for $n \geq 4$ and even, or $n\geq9$. More details can be found in  Section \ref{5.1}.
\end{remark}

Besides the examples of complements of fundamental shadow links from Theorems \ref{Families} and \ref{Fibered}, there exists another family of links in $S^3$ where the volume is explicit. K. H. Wong showed in \cite{Won19} that certain subfamilies of Whitehead chains  satisfy  Conjecture \ref{VolC}. The Whitehead chains $W_{a,b,c,d}$ are a family of links that can be constructed as the closure of the composition of $a$-twists, $b$-belts, $c$-clasps, and $d$-mirrored clasps. In particular when $b=1$, the complement is hyperbolic with $Vol(W_{a,1,c,d}) = (c+d) v_8$. We let $\mathcal{W}$ be this set. We will show that the families of links in Theorems \ref{Families} and \ref{Fibered} are not homeomorphic to these Whitehead chains in $\mathcal{W}$.

\begin{figure}
\centering
\includegraphics[scale=.775]{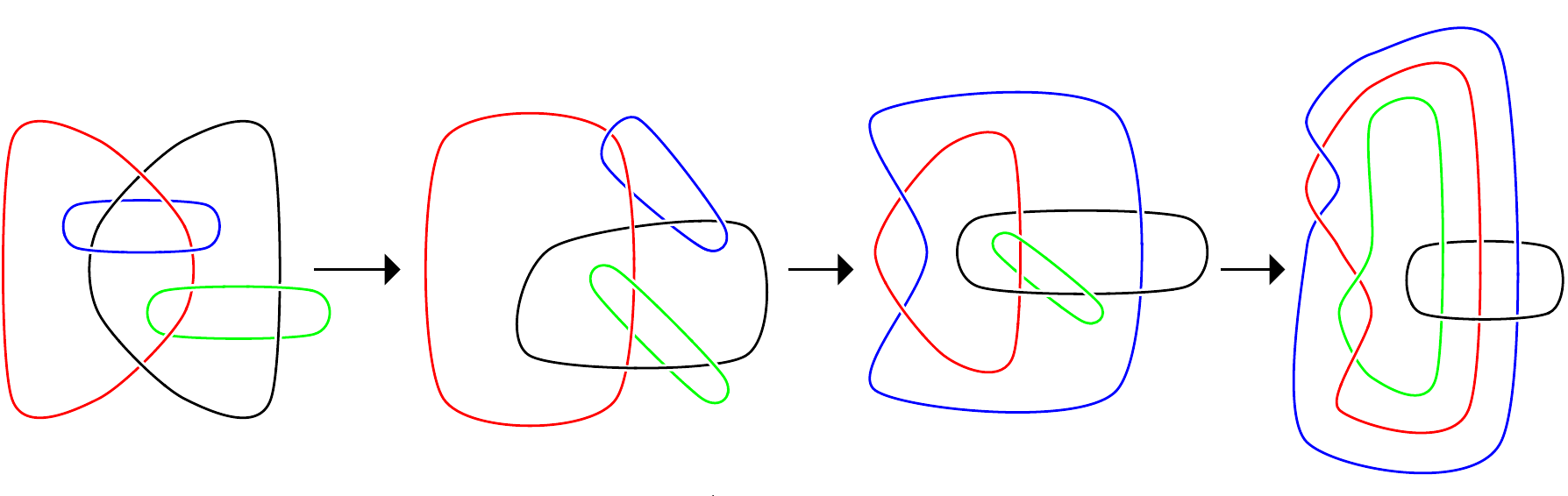}
\caption{An isotopy of the Borromean twisted sister to the closure of a braid where the braid axis is colored black. The resulting manifold has fibered surface $D_3$ over $S^1$ with monodromy the image under $\Gamma$ of  $\sigma_1^{-2} \sigma_2^{2}\in B_3$. }
\label{Fig:Borromean}
\end{figure}

\begin{prop}\label{Whitehead}
For an integer $k\geq4$, no link of  Theorems \ref{Families} and  \ref{Fibered} has complements   homeomorphic to the complement of an element of $\mathcal{W}$. 
\end{prop}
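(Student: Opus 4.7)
My strategy is to combine Mostow--Prasad rigidity with a comparison of a secondary topological invariant, taking the number of link components (equivalently, the number of cusps) as the first obstruction, and then invoking a finer geometric invariant in the small number of remaining cases.

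Suppose the complement of one of our links $L$ (from Theorem \ref{Families} or \ref{Fibered}) is homeomorphic to the complement of a Whitehead chain $W=W_{a,1,c,d}\in\mathcal{W}$. Mostow--Prasad rigidity forces the hyperbolic volumes to agree, so $(c+d)v_8=2kv_8$ and hence $c+d=2k$. In addition, any homeomorphism of finite-volume hyperbolic $3$-manifolds sends cusps to cusps, so the number of link components of $L$ must coincide with the number of components of $W_{a,1,c,d}$.

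I would next read off from the Whitehead chain construction (as recalled in van der Veen \cite{Rol08} and used in Wong \cite{Won19}) the number of components of $W_{a,1,c,d}$ as an explicit function of $a,c,d$. Combined with the component counts of $L_k, J_k, K_k$ recorded in Figure \ref{Fig:Families} and of $\overline{b_k}$ (which is the closure of a braid in $B_{k+3}$ together with its axis, whose component count can be read off from the permutation induced by $b_k$), the relation $c+d=2k$ leaves only a short finite list of candidate triples $(a,c,d)$ to examine for each $k\geq 4$.

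For each remaining candidate I would rule out homeomorphism via a finer invariant. Natural choices include: the multiset of cusp shapes, which can be computed from the regular ideal octahedral decompositions available for both families; the presence of a distinguished ``belt-type'' unknotted component bounding a disk pierced transversally by all remaining components, which is built into every element of $\mathcal{W}$ (there is exactly one belt when $b=1$) but is not present in any of our links for $k\geq 4$ (the $0$-framed partners in $L_k,J_k,K_k$ each bound only twice-punctured disks, and the braid axis of $\overline{b_k}$ meets the chain in a way forced by the permutation of $b_k$); or the JSJ/essential-surface structure, which reflects the cyclic chain assembly peculiar to $\mathcal{W}$. The main obstacle I expect is the last step: after the volume and component-count reductions, one must identify a uniform invariant that handles all $k\geq 4$ at once, since the restriction $k\geq 4$ indicates that for small $k$ the finite case analysis actually produces genuine homeomorphisms (as already illustrated by Corollary \ref{Borromean} and Table \ref{Table:Links} for $k=1$).
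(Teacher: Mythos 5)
Your first two reductions (Mostow--Prasad rigidity forcing $c+d=2k$, and the requirement that the numbers of cusps agree) are exactly the paper's strategy, but you stop just short of the computation that closes the argument, and you then anticipate a residual case analysis that in fact never materializes. The missing fact is the component count of the Whitehead chain itself: for $W_{a,1,c,d}$ with $c+d$ even, each clasp and each mirrored clasp contributes one link component and the single belt contributes one more, so $W_{a,1,c,d}$ has exactly $c+d+1=2k+1$ components (the twist parameter $a$ contributes none). On the other side, every link of Theorems \ref{Families} and \ref{Fibered} has at most $k+4$ components. Since $2k+1>k+4$ precisely when $k\geq 4$, the cusp counts can never agree, your ``short finite list of candidate triples $(a,c,d)$'' is empty, and none of the finer invariants you propose (cusp shapes, belt detection, JSJ/essential-surface structure) is needed. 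This inequality is also the true source of the hypothesis $k\geq 4$: for $k\leq 3$ the counts can coincide, and for $k=1$ genuine homeomorphisms do occur (Corollary \ref{Borromean}), which matches your closing intuition.

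So the concrete gap is that you defer, rather than carry out, the computation of the number of components of $W_{a,1,c,d}$ as a function of $c+d$; that single count is what makes the cusp-number obstruction uniform in $k$ and renders the rest of your plan unnecessary. As a secondary caution, had a finer invariant actually been needed, your proposed use of cusp shapes or JSJ data would have required substantially more work than the proposition warrants, since computing cusp shapes of the links of Theorem \ref{Fibered} for all $k$ is not something the octahedral decomposition hands you for free.
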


\begin{proof}
As stated previously, it is known through an explicit decomposition into ideal hyperbolic octahedra that 
$Vol(W_{a,1,c,d})= (c+d)v_8$. In the case when $(c+d)$ is even and hence the volume is an even multiple of $v_8$, the number of components of the Whitehead chain is $(c+d+1)$. This is due to each clasp and mirror clasp adding a component with the addition of the belt. If we have a fundamental shadow link of complexity $k$, then the manifold will have volume $2kv_8$. In the examples given in Theorems \ref{Families} and \ref{Fibered}, the number of components of the link will either be $(k+2)$, $(k+3)$, or $(k+4)$. For these particular Whitehead chains to have the same volume, they would require $(2k+1)$ components. This implies that for $k\geq 4$, the Whitehead chain complement will have more boundary components than the  complements of the fundamental shadow links; thus, they are not homeomorphic. 
\end{proof}

Now we consider a slightly reformulated version of Conjecture \ref{VolC} for manifolds that are not necessarily hyperbolic. The conjecture states the asymptotics of the Turaev-Viro invariant approach the Gromov norm where the conjecture is formally stated in \cite{DetKY18}. By using the explicit descriptions of the monodromies given in Theorem \ref{Fibered}, we can create manifolds that satisfy the conjecture in the following.

\begin{cor}\label{Gromov}
Let $M_k$ be a manifold from Theorem \ref{Fibered} of volume $2kv_8$ that is realized as the complement of the braided link  $\overline{b_k}$ for $b_k \in B_n$ for some $n$. Then for any $p \geq 2$, there exists a manifold $M'_k$ obtained as the complement of the braided link $\overline{b_k'}$ for $b_k' \in B_{n+p-1}$ such that 
\begin{align}\label{Eq:Gromov}
\lim_{r\to \infty} \frac{2 \pi}{r}\log| TV_q(M'_k)|  = v_3 \|M'_k\| = 2kv_8
\end{align}
where $r$ is odd, $q=\exp\left(\frac{2 \pi i}{r}\right)$, $\|M'_k\|$ denotes the Gromov norm of $M'$, and $b_k'$ is obtained by embedding $b_k$ into $B_{n+p-1}$.
\end{cor}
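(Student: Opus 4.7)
First I would take $b'_k$ to be the image of $b_k$ under the canonical inclusion $B_n \hookrightarrow B_{n+p-1}$ that appends $p-1$ trivial strands. The braided link then decomposes as
$$\overline{b'_k} \;=\; \overline{b_k} \,\cup\, U_1 \,\cup\, \cdots \,\cup\, U_{p-1},$$
where each $U_i$ is the unknotted closure of a trivial strand. By construction the $U_i$ are mutually unlinked, unlinked from $\hat{b_k}$, and each forms a Hopf link with the braid axis $B$. In particular $M'_k = M_k \setminus (U_1 \cup \cdots \cup U_{p-1})$, and $M_k$ is recovered from $M'_k$ by meridional Dehn filling of the $U_i$. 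Because $b'_k$ acts trivially on the extra $p-1$ strands, I can choose a sub-disk $D_n \subset D_{n+p-1}$ enclosing the $n$ active punctures on which $\Gamma(b'_k)|_{D_n} = \Gamma(b_k)$ and such that $\Gamma(b'_k)$ is the identity on the complementary planar surface $F := D_{n+p-1} \setminus \mathrm{Int}(D_n)$. Passing to mapping tori yields an incompressible separating torus $T := \partial D_n \times S^1$ that splits
$$M'_k \;=\; M_k \,\cup_T\, N, \qquad N := F \times S^1,$$
with $N$ Seifert fibered.

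Next I would apply additivity of the simplicial volume along incompressible tori together with the vanishing of $\|\cdot\|$ on Seifert fibered manifolds to obtain
$$v_3\,\|M'_k\| \;=\; v_3\,\|M_k\| \,+\, v_3\,\|N\| \;=\; Vol(M_k) \;=\; 2kv_8,$$
which is the second equality of \eqref{Eq:Gromov}. This is routine once the splitting of the previous step is in place.

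The remaining equality is the main content. The upper bound
$$\limsup_{r\to\infty}\frac{2\pi}{r}\log|TV_q(M'_k)| \;\leq\; v_3\|M'_k\| \;=\; 2kv_8$$
follows from Detcherry's general inequality bounding the exponential growth rate of $TV_q$ above by $v_3$ times the simplicial volume. For the matching lower bound, I would invoke the monotonicity of the Turaev-Viro growth rate under Dehn filling (due to Detcherry-Kalfagianni): since $M_k$ is obtained from $M'_k$ by filling the $U_i$'s,
$$\liminf_{r\to\infty}\frac{2\pi}{r}\log|TV_q(M'_k)| \;\geq\; \liminf_{r\to\infty}\frac{2\pi}{r}\log|TV_q(M_k)| \;=\; 2kv_8,$$
the last equality being Theorem \ref{Fibered}(3). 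The main obstacle is this lower bound; should the quoted monotonicity not apply in the form required, the fallback is a direct state sum computation showing that $TV_q(M'_k)$ factors as $TV_q(M_k)$ times a term of at most polynomial growth in $r$, reflecting the trivial action of the braid on the $p-1$ extra strands.
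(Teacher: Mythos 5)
Your construction of $b_k'$, the splitting $M_k' = M_k \cup_T N$ with $N = F\times S^1$ Seifert fibered, the Gromov norm computation via additivity along the splitting torus, and the lower bound via the fact that drilling tori does not decrease $TV_r$ (Corollary $5.3$ of \cite{DetK20}) are all sound; in fact your $N$ is exactly the invertible cabling space $S_p$ that the paper glues on. The genuine gap is in your upper bound. There is no ``general inequality bounding the exponential growth rate of $TV_q$ above by $v_3$ times the simplicial volume'': such a statement would give the upper half of Conjecture \ref{VolC} for every hyperbolic $3$-manifold, which is open. The theorem of Detcherry and Kalfagianni you appear to have in mind (Theorem $1.1$ of \cite{DetK20}) reads $LTV(M)\le C\|M\|$ for a universal constant $C$ far larger than $v_3$, so it cannot close the squeeze $\limsup \le v_3\|M_k'\| = 2kv_8$.

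The step can be repaired, but only by using the specific structure of your decomposition rather than a general volume bound: the TQFT gluing along $T$ together with Cauchy--Schwarz gives $TV_r(M_k') \le TV_r(M_k)\cdot TV_r(N)$, and $TV_r(N)$ grows at most polynomially in $r$ because $N$ is Seifert fibered (both facts are in \cite{DetK20}); this yields $\limsup_{r\to\infty} \frac{2\pi}{r}\log|TV_q(M_k')| \le 2kv_8$, which combined with your lower bound finishes the proof. Your proposed ``fallback'' (a state-sum factorization reflecting the triviality of the extra strands) is in effect this argument, but you assert it rather than carry it out, and it is where the real content lies. The paper packages all of this by quoting Corollary $8.4$ of \cite{DetK20}: gluing an invertible cabling space --- here $S_p$, the complement in a solid torus of $p$ parallel copies of the core, i.e.\ the braided link of the identity braid in $B_p$ --- onto a manifold whose $TV$ growth rate equals $v_3$ times its Gromov norm produces another such manifold with the same growth rate, which delivers both inequalities at once.
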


\begin{proof}
From Corollary $8.4$ in \cite{DetK20}, gluing a boundary component of a manifold $M$ that satisfies the first equality sign of Equation (\ref{Eq:Gromov}) to a certain boundary component  of a manifold known as the invertible cabling space results in a new manifold $M'$ which also satisfies the first equality sign of Equation (\ref{Eq:Gromov}). If the manifold $M$ satisfies Conjecture \ref{VolC}, then the asymptotics of the Turaev-Viro invariants of $M$ and $M'$ are equal. In particular, the manifold $S_p$  which is the complement of a solid torus with $p \geq 2$ parallel copies of the core is an invertible cabling space. This manifold can be viewed as complement of the braided link obtained from the identity braid in $B_p$. Since $M_k$ is the complement of a braided link, then $M_k$ can be viewed as the complement of a solid torus containing the braid closure of $b_k$. By gluing the outer torus boundary of $M_k$ to one of the inner tori of $S_p$, we obtain the manifold $M'_k$. 
\end{proof}

\section{All links are sublinks of fundamental shadow links}\label{4}

\begin{figure}
\labellist
\pinlabel \small{$C$} at 90 117
\pinlabel \small{$C$} at 198 117
\pinlabel \small{$C$} at 318 117
\pinlabel \small{$R_1$} at 188 77
\pinlabel \small{$R_2$} at 188 53
\pinlabel \small{$R_3$} at 203 63
\pinlabel \small{$R_4$} at 220 63
\endlabellist
\centering
\includegraphics[scale=.9]{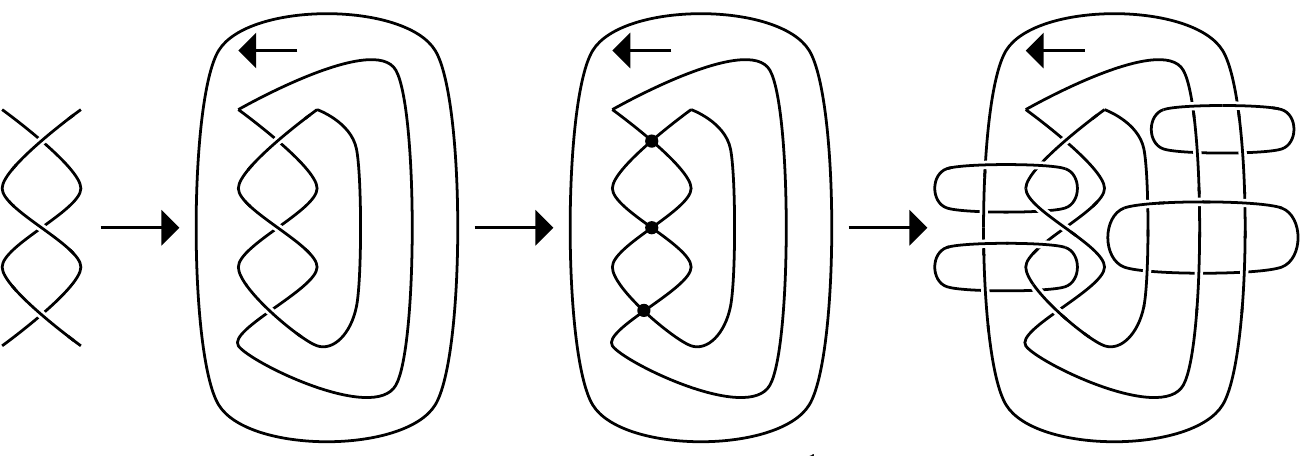}
\caption{We begin with the unreduced word $b=\sigma_1 \sigma_1^{-1} \sigma_1^{-1} \in B_2$ which we embed into $B_3$  shown in the first two diagrams with outermost component labeled $C$. The projection of the second diagram onto $S^2$ gives the third diagram where we label the regions not touching $C$ with $R_i$. Each region $R_i$ corresponds to a link component parallel to the $z$-axis given in the final diagram. The final diagram is of a link that has volume $6v_8$, satisfies Conjecture \ref{VolC}, and contains $\hat{b}$ as a sublink.   }
\label{Fig:Sublink}
\end{figure}

Now by generalizing the method of proof for Theorem \ref{Fibered}, we will  proceed to show that every link in $S^3$ is contained in a hyperbolic link $L$ satisfying Conjecture \ref{VolC}. We show this by representing each link as a closed braid, and we algorithmically add components to obtain $L$.  As mentioned in the introduction,
the proof requires each generator from the braid group to appear; however, since the algorithm works for an unreduced word, it works for every link. Again for a braid $b\in B_n$, we denote its braid closure by $\hat{b}$, and  we define the \textit{length} of $b$ to be the number of standard generators appearing in $b$.

\begin{thm}\label{Sublink}
Let $b\in B_n$ be an unreduced word of length $k$ with each generator appearing at least once. Then $\hat{b}$ is a sublink of a  link  $L$ in $S^3$  where the complement $S^3 \backslash L$ satisfies Conjecture \ref{VolC} and has volume $2kv_8$.
\end{thm}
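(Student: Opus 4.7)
The plan is to mimic the shadow-theoretic construction used in the proofs of Theorems \ref{Families} and \ref{Fibered}, but now starting from an arbitrary closed braid rather than a specifically chosen one. First I would take $\hat{b} \subset S^3$ together with its braid axis $C$, and project the resulting link onto the $2$-sphere obtained as the boundary of a ball containing the braid. The projection has exactly $k$ crossings, one for each of the $k$ standard generators appearing in $b$, and each crossing contributes a $4$-valent vertex. The hypothesis that each generator $\sigma_1,\ldots,\sigma_{n-1}$ appears at least once will guarantee that the corresponding graph $G$ on $S^2$ is connected, which in turn ensures that the singular set $Sing(P)$ of the shadow built from $G$ is connected (as required by the construction of Section \ref{2.2}).

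Next, I would build a boundary-decorated shadowed polyhedron $P$ from the projection exactly as in Section \ref{2.4}: take the mapping cylinder of the projection onto the disk, color the outermost boundary component (corresponding to $C$) as external, and color the remaining boundaries coming from the strands of $\hat{b}$ as internal. Then, for each region of the projection that does not touch the braid axis, I would cap the corresponding internal boundary component by a disk with gleam $\tfrac{V_R}{2}$, where $V_R$ is the number of vertices touching $R$. By Lemma \ref{Cap} and Proposition \ref{Stein}, each such capped disk corresponds to a $0$-framed unknot running parallel to the $z$-axis through the center of the region. The union of these augmentation components with $\hat{b}\cup C$ is the link $L$.

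Then I would verify that $S^3\setminus L$ is homeomorphic to the complement of a fundamental shadow link of complexity $k$. After drilling out the augmentation components (equivalently, removing the $2$-handles corresponding to the capped disks), the resulting shadow consists of the thickened singular set $S(P)$ with all boundaries colored external. By Definition \ref{Def:FSL} this is the shadow of a fundamental shadow link complement of complexity $k$. Applying Proposition \ref{FSLVol} and the main result of \cite{BelDKY} immediately gives that $S^3\setminus L$ is hyperbolic with volume $2kv_8$ and satisfies Conjecture \ref{VolC}. Since $\hat{b} \subset L$ by construction, the closed braid is a sublink of $L$.

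The main obstacle will be checking that the augmentation components can actually be placed disjointly and parallel to the $z$-axis, so that the link $L$ really admits the surgery presentation of a fundamental shadow link. As in the proof of Theorem \ref{Fibered}, the naive maximal-tree construction of Section \ref{2.2} does not in general yield disjoint Hopf link pairs when the planar graph has multiple cycles; one must instead appeal to the Stein map formalism of Proposition \ref{Stein}, or equivalently perform a sequence of handleslides in Kirby calculus, in order to arrange the $0$-framed unknots in the form required. Verifying this carefully, together with the combinatorial check that the shadow built from an arbitrary braid projection has exactly $k$ vertices and matches a fundamental shadow link shadow of that complexity, constitutes the technical heart of the argument.
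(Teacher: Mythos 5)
Your overall architecture matches the paper's proof: project $\hat{b}$ to a $4$-valent planar graph with $k$ vertices (connected precisely because every generator appears), build the associated fundamental shadow link shadow with identity $3$-braids on the edges, complete it to a shadow of the form $P_L$ by capping region circles with disks of gleam $\tfrac{V_R}{2}$, and use the Stein factorization (or, alternatively, handleslides in Kirby calculus) to arrange the resulting $0$-framed unknots disjointly and parallel to the $z$-axis. You also correctly isolate the disjointness of the Hopf link pairs as the technical heart; the paper resolves this exactly as you anticipate, giving both the Stein map argument and a maximal-tree-plus-handleslides argument.

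There is, however, a concrete error in your choice of the auxiliary component $C$. In the paper $C$ is \emph{not} the braid axis: it is the closure of an extra trivial strand, obtained by embedding $b$ into $B_{n+1}$ via $\sigma_i \mapsto \sigma_{i+1}$, so that $C$ projects to a circle concentric with and disjoint from the diagram of $\hat{b}$. This component is forced: it is the boundary circle of $S(P)$ attached to the outermost region of the graph, hence a component of the fundamental shadow link that must survive into $L$ (dropping it would make $S^3\setminus L$ a Dehn filling of the fundamental shadow link complement rather than the complement itself). The braid axis is a different curve entirely --- it links every strand of $\hat{b}$, so including it in the projection adds crossings and wrecks the claim that the graph has exactly $k$ vertices (and hence the volume count $2kv_8$); moreover, the braid axis is isotopic to the vertical augmentation circle of the \emph{innermost} region, so it already appears among the $0$-framed unknots and cannot simultaneously serve as the outermost external component. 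Relatedly, your coloring is stated backwards: the components carrying crossing information (the strands of $\hat{b}$) must be colored external so that they persist as components of $L$, while it is the region circles that are colored internal and capped. With $C$ corrected to the outermost circle and the coloring fixed, your argument coincides with the paper's.
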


\begin{proof}
As in Theorem \ref{Fibered}, the maximal tree construction will result in  Hopf links which are not disjoint from each other. By using the Stein map method, we only need to see how to augment the initial shadow surface into a shadow that corresponds to a link in $S^3$. In particular, we  need to cap disks of  our initial  shadow to obtain the $2$-dimensional polyhedron $P_L$ defined in Section \ref{2.4}. For the shadows of the fundamental shadow links this is possible, the augmentation will always result in a way to find the disjoint Hopf links. This is an advantage in using the Stein map approach; however, an alternative proof using the maximal tree construction of the fundamental shadow links will be included.

Let $b \in B_n$ be an unreduced word of length $k$ containing each generator at least once. We can transform the braid closure $\hat{b}$ into a planar $4$-valent graph $G$ by projecting the link onto $S^2$ such that each crossing of $\hat{b}$ becomes a vertex where we keep track of the sign of the crossing. Note that if we do not have at least one of each generator, then $G$ will not be connected. Now we consider the simple polyhedron $S(P)$  of the fundamental shadow link obtained from the  graph $G$ by replacing each vertex, as in Figure \ref{Fig:Graph}, such that the signs of the crossings   match  $\hat{b}$, and we replace each edge with the identity $3$-braid. The  boundary of $S(P)$ are copies of $S^1$. Since the graph $G$ was obtained from a closed braid, one of the boundary components $C$ is outermost in the sense it bounds a disk $D$ such that $D\times (-\epsilon, \epsilon)$ contains the remaining components of $\partial(S(P))$ for some $\epsilon$. Because of this component $C$ and of our choice of using the identity $3$-braids, this shadow can be augmented by capping disks to obtain a shadow of the form $P_L$ given in Section \ref{2.4}. Each of these capped disks will correspond to a disjoint Hopf link. 

Now consider the boundary-decorated  shadowed polyhedron $P$ with simple polyhedron $S(P)$ such that  $C$ is colored as external, the components with crossing information are colored external, and the remaining components are colored as internal. Since we replaced the edges with identity $3$-braids, then the collection of internal colored boundary components are unknotted. Because $\partial(P)$ contains an outermost circle, we can obtain a shadow for a link complement in $S^3$ as in Section \ref{2.4} by capping components by disks. As before in the proof of Theorem \ref{Families}, we  cap $\partial_{int}(P)$ by disks and drill out their corresponding tori to obtain the fundamental shadow link $L$ in $S^3$. Since $G$ was a graph with complexity $k$, then the link complement of $L$ has volume $2kv_8$ and satisfies Conjecture \ref{VolC}.  Now by construction, the collection $\partial_{ext}(P) \backslash \{C\}$ corresponds to the original braid closure $\hat{b}$, therefore $\hat{b} \subset L \subset S^3$. 
\end{proof}

Alternatively, we will provide a proof of Theorem \ref{Sublink} using the maximal tree construction from Section \ref{2.2} and Kirby calculus.

\begin{proof}[Alternate proof of Theorem \ref{Sublink}]
We will  begin with the same $b \in B_n$ of length $k$ which contains each generator at least once, and we will transform the braid closure $\hat{b}$ into a planar $4$-valent graph $G$ where we keep track of the signs of the crossings. As noted in the proof of Theorem \ref{Fibered}, any maximal tree will have the number of edges one less than the number of vertices of $G$. Since the number of vertices is equal to the length of $b$, then any maximal tree will have $(k-1)$ edges. Now note that each vertex of the graph corresponds to a generator of the initial word $b$. We will choose a maximal tree $T$ such that the edges of $T$ satisfy the following two conditions: 

\begin{enumerate}[i).]
\item Each edge connects vertices that  are contained in $G_b$ where $G_b$ is the restriction to $b$ of the projection of $\hat{b}$ to $G$ when $b$ is viewed as a subset of $\hat{b}$. 

\item No edge is adjacent to the outermost region of $G$ when viewed as a graph  in $S^2$. 
\end{enumerate}

Since $b$ contains one of each generator, the subset $G_b$ is connected, and we can choose $(k-1)$ edges which connect all the vertices. 
The second condition  only appears when our unreduced word contains either $(\sigma_1 \sigma_1)$ or $(\sigma_1^{-1} \sigma_1^{-1})$. In either case, there is a choice for an edge of $T$ that satisfies the second condition. Admittedly, the choice of maximal tree is arbitrary; however, this choice of $T$ is consistent with the maximal tree we chose for the proof of Theorem \ref{Fibered}.  For now on, we will view $G$ as a graph in $S^2$, and we define two regions of our graph to be  \emph{adjacent} if there exists an edge in $G \backslash T$ which is adjacent to both of the regions. Notice that  since $T$ does not contain any cycles, any two regions of  $G$ are related by a sequence of adjacent regions. In particular, any region is related to the outermost region of our graph by a sequence of adjacent regions.

Now we consider the fundamental shadow link constructed from the maximal tree such that each vertex  is replaced by the correct sign of the generator in $b$ and each edge is replaced by the identity $3$-braid.  Since each region of the graph $G$ is adjacent to at least one edge of $G \backslash T$, then each corresponding region is punctured by at least one $0$-framed unknot in the link diagram. Note that each region of  $G$ corresponds to an unknotted fundamental shadow link component obtained from replacing the edges by the trivial $3$-braid. Excluding the component obtained from the outermost region, we will denote the collection of the disks these fundamental shadow link components bound by $\{D_i\}_{i \in I}$ with corresponding regions of $G$ denoted by $\{S_i\}_{i \in I}$. The boundary of each of these disks will be the fundamental shadow link component of our disjoint Hopf links.

We will now perform a sequence of handleslides such that each disk $D_i$ is punctured by exactly one $0$-framed unknot. Observe that given any two adjacent regions, there exists a $0$-framed unknot which corresponds to the edge between them. We first consider the collection of regions $\{S_i\}_{i \in I_1}$ of $G$ where $I_1$ is a subset of $I$ such that the regions are adjacent to the outermost region. We will leave alone the corresponding $0$-framed unknots such that they puncture only the collection of disks $\{D_i\}_{i \in I_1}$. Next, we consider the collection $\{S_i\}_{i \in I_2}$ of regions of $G$ which are adjacent to the collection $\{S_i\}_{i \in I_1}$. Note that the corresponding $0$-framed unknots puncture the same previous collection $\{D_i\}_{i \in I_1}$ resulting in pairs of Hopf links which are not disjoint as well as puncturing an additional collection of disks $\{D_i\}_{i \in I_2}$. By performing handleslides of the $0$-framed unknots arising from the edges between  $\{S_i\}_{i \in I_1}$ and  $\{S_i\}_{i \in I_2}$ over the $0$-framed unknots  corresponding to the edges between  $\{S_i\}_{i \in I_1}$ and the outermost region, each $D_{i \in I_1}$ will be punctured by exactly one $0$-framed unknot. Also notice that after the handleslide, both sets of $0$-framed unknots  can be isotoped to a collection of $0$-framed unknots which are parallel to the $z$-axis and each puncturing only one element of $\{D_i\}_{i  \in {I_1 \cup I_2}}$. 

 Now consider the collection of regions  $\{S_i\}_{i \in I_3}$ which are adjacent to  $\{S_i\}_{i \in I_2}$ with $I_3 \cap  I_1 = \emptyset$. The corresponding $0$-framed unknots arising from the edges between the  regions  $\{S_i\}_{i \in I_3}$ and  $\{S_i\}_{i \in I_2}$ will again puncture the disks  $\{D_i\}_{i \in I_2}$  creating  Hopf link pairs which are not disjoint as well as puncturing the collection  $\{D_i\}_{i \in I_3}$. Now we can perform handleslides on the $0$-framed unknots which puncture both the disks  $\{D_i\}_{i \in I_2}$ and  $\{D_i\}_{i \in I_3}$ over the adjacent previous collection of $0$-framed unknots. Since the previous collection of $0$-framed unknots are parallel to the $z$-axis, the $0$-framed unknots arising from the edges between  $\{D_i\}_{i \in I_2}$ and  $\{D_i\}_{i \in I_3}$ after the handleslide are also parallel to the $z$-axis. In addition, they each only puncture one element of $\{D_i\}_{i  \in {I_1 \cup I_2 \cup I_3}}$. Since any two regions are related by a sequence of adjacent regions, we can perform these handleslides until every disk of $\{D_i\}_{i  \in { I}}$ is punctured exactly once by a $0$-framed unknot which is parallel to the $z$-axis. By doing this, the surgery presentation  is the same as the one  obtained by using the Stein map approach where the Hopf links are disjoint from each other. The result now follows from  Remark \ref{Rem:PropOfLink}. This concludes the alternative proof of Theorem \ref{Sublink} using the maximal tree  construction and Kirby calculus.
\end{proof}

In summary, we can construct the link $L$ algorithmically from a given braid closure $\hat{b}$ for $b\in B_n$ where $b$ contains at least one of every generator. We begin by embedding $b$ into $B_{n+1}$ which we denote by $b'$ such that each generator $\sigma_i$ is sent to $\sigma_{i+1}$.  The element $b'$ can be viewed as adding a new strand in front of $b$ which will correspond to the component $C$ in the closure. Now from the braid closure $\hat{b'}$, we will add components to obtain our link $L$. Consider the projection $\pi (\hat{b'})$ onto the surface $S^2$ such that each crossing becomes a vertex. Excluding the regions touching $C$, each region of $\pi(\hat{b'})$ will correspond to a link component of $L$ which is parallel to the $z$-axis. We will now count these regions. We can consider $\pi(\hat{b'})$ as a graph in $S^2$, therefore the Euler characteristic $\chi(S^2) = 2 = v - e + r$ where v,e, and r are  vertices, edges and regions, respectively. Since the vertices correspond to generators, then $v=k$, and since our graph is $4$-valent with the addition of $C$, then $e=2k+1$. This implies that $r=k+3$. Again since we do not need to add link components to regions touching $C$, our link $L$ is obtained from adding links parallel to the $z$-axis in each of the remaining $(k+1)$ corresponding regions of $\hat{b'}$. 

A simple example of the augmentation  can be seen in Figure \ref{Fig:Sublink} on the closure of the unreduced braid   $\sigma_1 \sigma_1^{-1} \sigma_1^{-1} \in B_2$.

\section{Applications to the AMU conjecture}\label{5}

We will now consider applications of the fundamental shadow links from Section \ref{3} towards the AMU conjecture. Given an oriented surface $\Sigma_{g,n}$ where $g$ is the genus and $n$ is the number of boundary components, we consider $Mod(\Sigma_{g,n})$ to be the mapping class group of the surface which fixes the boundary components.


By the Nielsen-Thurston classification of mapping classes, every element of $Mod(\Sigma_{g,n})$ are either periodic, reducible, or pseudo-Anosov. In particular, the type of an element $f\in Mod(\Sigma_{g,n})$ determines the geometric structure of the mapping torus $M(f) = \Sigma_{g,n} \times [0,1] / (x,0) \sim (f(x),1)$. For example, Thurston showed in \cite{Thu88} that the mapping torus $M(f)$ is hyperbolic if and only if $f$ is pseudo-Anosov.

As stated in the introduction,  Andersen, Masbaum, and Ueno conjectured that the geometry of the mapping class group is related to the quantum representation with Conjecture \ref{AMU}. 
We will extend the known results for Conjecture \ref{AMU} to elements in the mapping class groups of $Mod(\Sigma_{g,4})$ for all $g$, $Mod(\Sigma_{n-1,n})$ and $Mod(\Sigma_{n-2,n})$ for $n \geq 5$,  and for the few concrete cases of $Mod(\Sigma_{1,3})$ and $Mod(\Sigma_{3,3})$. We will begin by stating the following where we again mention two  elements $f, g \in Mod(\Sigma_{g,n})$  are \emph{independent} if there is no $h \in Mod(\Sigma_{g,n})$ such that both $f$ and $g$ are  conjugate to non-trivial powers of $h$.

\begin{thm}\label{4Sphere} 
Given $g \geq 0$, there is a pseudo-Anosov element of $Mod(\Sigma_{g,4})$ that satisfies Conjecture \ref{AMU}. Furthermore for $g \geq 3$, there are infinitely many pairwise independent pseudo-Anosov elements in $Mod(\Sigma_{g,4})$ that satisfy Conjecture \ref{AMU}.
\end{thm}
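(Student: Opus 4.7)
The plan is to apply the bridge between Conjectures \ref{VolC} and \ref{AMU} established by Detcherry and Kalfagianni in \cite{DetK19}: if a cusped hyperbolic $3$-manifold $M$ satisfies Conjecture \ref{VolC} and fibers over $S^1$ with fiber $\Sigma_{g,n}$ and pseudo-Anosov monodromy $\varphi$, then $\varphi$ satisfies Conjecture \ref{AMU}. Because the complements in Theorems \ref{Families}, \ref{Fibered}, and \ref{Links} are already hyperbolic, any fibration one exhibits on them automatically has pseudo-Anosov monodromy by Thurston's hyperbolization of fibered manifolds. Thus the task reduces to exhibiting, for each $g \geq 0$, an explicit fibration with fiber $\Sigma_{g,4}$ on one of our link complements.

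The case $g = 0$ is immediate from Theorem \ref{Fibered} with $k = 1$: the complement of the braided link $\overline{b_1}$ fibers over $S^1$ with fiber $D_3 \cong \Sigma_{0,4}$ and monodromy $\Gamma(\sigma_1^{-2}\sigma_2^{2})$, and the theorem already verifies Conjecture \ref{VolC} for this complement. For each $g \geq 1$, I would identify a four-component link in our collections whose complement fibers with fiber $\Sigma_{g,4}$. Natural candidates for small $g$ are the four-component links $J_2$, $K_1$, together with four-component entries of Table \ref{Table:Links}; for larger $g$, I would realize the complement of a chain link from the families $\{L_k\}$, $\{J_k\}$, or $\{K_k\}$ as a mapping torus after Dehn filling along all but four of its boundary components, using Remark \ref{Rem:DehnFill} to tune the number of cusps without disturbing the volume-conjecture status.

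For the infinitely many pairwise independent pseudo-Anosov elements in $Mod(\Sigma_{g,4})$ when $g \geq 3$, I would mimic the strategy of \cite{DetK19}: starting from a base element $\varphi_0 \in Mod(\Sigma_{g,4})$ produced as above, generate an infinite family $\{\varphi_k\}_{k \geq 1}$ whose mapping tori remain link complements from Theorems \ref{Families} or \ref{Fibered} (possibly after further Dehn fillings). Since the resulting mapping tori have the distinct volumes $2k v_8$, they are pairwise non-commensurable, so no $\varphi_k$ can be conjugate to a non-trivial common power of another, and they are pairwise independent in the sense of the introduction.

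The principal obstacle I anticipate is verifying, for each $g \geq 1$, that our collection of links contains one whose fiber is precisely $\Sigma_{g,4}$. While small $g$ can be checked by direct inspection or in \emph{SnapPy}, producing such explicit fibrations for all $g$ will likely require combining the shadow-polyhedron descriptions of Section \ref{3} with Stallings' fibration criterion, together with careful bookkeeping of how Dehn filling a link component changes the genus of the fiber surface while preserving the four boundary components needed to land in $Mod(\Sigma_{g,4})$.
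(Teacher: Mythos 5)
Your reduction to Theorem \ref{LimInf} and your $g=0$ case (the braided link $\overline{b_1}$, whose complement fibers with fiber $D_3\cong\Sigma_{0,4}$) are fine, but the heart of the theorem is not established. For $g\geq 1$ you propose to locate, inside the constructed families, a complement fibering with fiber exactly $\Sigma_{g,4}$, possibly after Dehn filling ``all but four'' components. Two problems: first, the explicit fibrations available on these complements (Theorem \ref{Fibered}) all have planar fiber, and nothing guarantees that the families contain a fibration with fiber $\Sigma_{g,4}$ for every $g$; second, Dehn filling destroys the known volume-conjecture status except in the very special situation of Remark \ref{Rem:DehnFill} (an $\infty$-filling that yields another fundamental shadow link), and in any case the number of cusps of the filled manifold is not the number of boundary components of the fiber. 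The paper avoids all of this by exploiting that Theorem \ref{LimInf} needs only $lTV(M(f))>0$, not Conjecture \ref{VolC} itself: it builds a $2$-parameter family $L_{n,m}$ of alternating closed homogeneous braids containing the Borromean rings as a sublink, so that $lTV>0$ follows from the drilling inequality of \cite{DetK20}, hyperbolicity follows from Menasco's criterion, and (for $n=4$) Stallings' fibration has fiber $\Sigma_{m+2,4}$, covering all $g\geq3$; the cases $g=1,2$ come from the homogeneous braid presentations of the fundamental shadow links $L_{8n7}$, $L_{10n97}$, $L_{10n108}$ in Table \ref{Table:Mon} (Corollary \ref{Mon}), and $g=0$ from \cite{AndMU}.

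Your argument for infinitely many pairwise independent elements also fails. The manifolds you propose (complements of volume $2kv_8$ from Theorems \ref{Families} and \ref{Fibered}) do not all fiber with the same fiber $\Sigma_{g,4}$, so their monodromies do not lie in a single mapping class group; and distinct volumes do not rule out two elements being conjugate to powers of a common $h$: the mapping tori of $h^a$ and $h^b$ both cover $M(h)$, hence are commensurable with volumes $a\,Vol(M(h))$ and $b\,Vol(M(h))$, which can perfectly well be distinct multiples of $2v_8$. The paper instead fixes one fibered link $L_{4,m}$, finds a non-trivial Stallings twist curve $c$ in its fiber (guaranteed by the subword $\sigma_i^2\sigma_{i+1}^{-2}$ in the braid), and applies Theorem \ref{StalTwist} to the family $\{f\circ\tau_c^m\}_m$, which keeps the fiber fixed and delivers the pairwise independence.
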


In order to prove Theorem \ref{4Sphere}, we will need some preparation. We will begin by stating  a necessary result by Detcherry and Kalfagianni. In \cite{DetK19}, they  show  Conjecture \ref{VolC} implies  Conjecture \ref{AMU}. In particular, they show the following.

\begin{thm}[\cite{DetK19}, Theorem $1.2$]\label{LimInf}
Let $f \in Mod(\Sigma_{g,n})$ be a pseudo-Anosov mapping class, and let $M(f)$ be the mapping torus of $f$. If $lTV(M(f))>0$, then $f$ satisfies Conjecture \ref{AMU} where 
$$lTV(M(f)) = \lim_{r\to \infty} \inf  \frac{2 \pi}{r} \log| TV_q(M)|$$
 for $r$ odd and $q=\exp\left(\frac{2 \pi i}{r}\right)$. 
\end{thm}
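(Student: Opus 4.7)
The plan is to derive the AMU conclusion from the hypothesis $lTV(M(f))>0$ by expressing the Turaev-Viro invariant of the mapping torus as a sum of squared traces of the quantum representation, and then arguing by a polynomial-versus-exponential growth comparison. What must be shown is that for every sufficiently large odd $r$, there is a boundary coloring $c$ with $\rho_{r,c}(f)$ of infinite order.

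First I would establish, in the $\mathrm{SO}(3)$-TQFT framework of \cite{BlaHMV95}, the identity
\[
TV_r(M(f)) \;=\; \sum_{c} \bigl|\mathrm{Tr}\bigl(\rho_{r,c}(f)\bigr)\bigr|^{2},
\]
where $c$ ranges over admissible colorings of the $n$ boundary tori of $M(f)$. This combines two standard ingredients: the relation $TV_r(M)=|RT_r(M)|^{2}$ between the Turaev-Viro and squared-Reshetikhin-Turaev invariants (suitably adapted to colored boundary), and the TQFT description of $RT_{r,c}(M(f))$ as the trace of the mapping class $f$ acting on the space of conformal blocks attached to $\Sigma_{g,n}$ with boundary coloring $c$, which is exactly $\rho_{r,c}(f)$.

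Second, I would argue by contrapositive. Suppose that for some large $r$ the matrix $\rho_{r,c}(f)$ has \emph{finite} order for every admissible coloring $c$. Then each such $\rho_{r,c}(f)$ is diagonalizable with eigenvalues of modulus one, whence $|\mathrm{Tr}(\rho_{r,c}(f))|\leq d_{r,c}$. Verlinde-type dimension bounds give $d_{r,c}=O(r^{3g-3+n})$, and the count of admissible boundary colorings grows as $O(r^{n})$, so $|TV_r(M(f))|$ is bounded above by a polynomial in $r$. Therefore $\tfrac{2\pi}{r}\log|TV_r(M(f))|\to 0$, contradicting the hypothesis $lTV(M(f))>0$. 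Hence for every sufficiently large odd $r$ there must exist a coloring $c$ with $\rho_{r,c}(f)$ of infinite order, which is precisely the AMU condition for $f$.

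The main obstacle I expect is carefully setting up the trace formula in the first step with all normalization factors correct for a mapping torus with $n\geq 1$ boundary tori, since one must pin down the boundary colorings consistently on the two copies of $\Sigma_{g,n}$ forming the mapping cylinder and track the Hermitian pairing of the TQFT. Once that identity is in place, the polynomial dimension bounds on $d_{r,c}$ and the counting of admissible colorings are classical, and the final polynomial-versus-exponential contradiction is an elementary growth argument.
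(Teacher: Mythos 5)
Your proposal is correct and follows essentially the same route as the paper, which cites this result from Detcherry--Kalfagianni but reproduces exactly your two ingredients: the trace identity $TV_q(M(f))=\sum_c|\mathrm{Tr}(\rho_{r,c}(f))|^2$ (Proposition \ref{TVTrace}, via Theorem \ref{RTTrace}) and the polynomial-versus-exponential comparison with the Verlinde dimensions, carried out explicitly in the proof of Corollary \ref{Even}. The only points to tighten are that the negation of the AMU conclusion gives infinitely many odd $r$ (not just one) for which all $\rho_{r,c}(f)$ have finite order --- which is what lets you bound the $\liminf$ --- and that since $\rho_{r,c}$ is only projective, one should note that the eigenvalue of modulus $\neq 1$ argument descends to $\mathrm{PGL}$ because the projective ambiguity is by roots of unity.
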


If Conjecture \ref{VolC} is satisfied for a hyperbolic manifold $M(f)$, then  $f$ is pseudo-Anosov and $ lTV(M(f)) =Vol(M(f))  >0$ which implies Conjecture \ref{AMU}.  By using this statement, we will verify the  results shown throughout the remainder of the paper.

 With the techniques developed in \cite{DetK19} and the closed braids from Table \ref{Table:Mon}, we can find infinite families satisfying Conjecture \ref{AMU}. In particular in \cite{DetK19}, they focus on a $2$-parameter family of links which contains the figure-eight as a sublink to create their examples. We will demonstrate similar steps using a $2$-parameter family of links which instead contain the Borromean rings as a sublink. The links in this family can be realized as the closure of an element of a braid group such that if a standard generator of the braid group appears in the braid, it always appears with the same sign. A braid with such a property is known as a \emph{homogeneous braid}.

We will first introduce the notion of a Stallings twist. In \cite{Sta78},  Stallings showed the closure of a homogeneous braid is a fibered link over $S^1$ with fibered surface $\Sigma_{g,n}$ where the surface is obtained from Seifert's algorithm. In particular, the closure of a homogeneous braid is the mapping torus of a surface $\Sigma_{g,n}$ for some element $f \in Mod(\Sigma_{g,n})$.   Stalling also showed an operation which we will state as the \emph{Stallings twist} that takes a fibered link and transforms it into another fibered link with the same fibered surface.  The operation is performing a $1/m$ surgery on a simple closed curve of the fibered surface where the framing of the curve is induced by the normal vector of the surface. In general, the Stallings twist may be trivial in the sense that the link complement obtained afterwards is still homeomorphic to the complement of the original link. In  \cite{DetK19}, they show a criterion to obtain non-trivial Stallings twists. In particular, if we consider our link as the closure of a braid with  the braid containing the element $\sigma_{i}^2 \sigma_{i+1}^{-2}$, we can find a non-trivial Stallings twist on our surface. Now by using the non-trivial Stallings twist, we can obtain infinitely many pairwise independent pseudo-Anosov mapping class group elements as stated in the following.

\begin{thm}[\cite{DetK19}, Theorem $5.4$]\label{StalTwist}
Let $L$ be a hyperbolic fibered link with fiber $\Sigma$ and monodromy $f$. Suppose that $L$ contains a sublink $K$ with $lTV(S^3 \backslash K) >0$. Suppose, moreover, that the fiber $\Sigma$ admits a non-trivial Stallings twist along a curve $c \subset \Sigma$ such that the interior of the twisting disc $D$ intersects $K$ at most once geometrically. Let $\tau_c$ denote the Dehn twist of $\Sigma$ along $c$. Then the family $\{f \circ \tau_c^m\}_m$ of homeomorphisms  gives infinitely many pairwise independent  pseudo-Anosov mapping classes in $Mod(\Sigma)$ that satisfy Conjecture \ref{AMU}.
\end{thm}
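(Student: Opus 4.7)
The plan is to realize each mapping torus $M_m := M(f \circ \tau_c^m)$ as a link complement in $S^3$ obtained from $M(f) = S^3 \setminus L$ by the Stallings twist along $c$, and then deploy Thurston's hyperbolic Dehn surgery theorem together with a drilling-monotonicity principle for the invariant $lTV$. Set $N := S^3 \setminus (L \cup c)$; this is $M(f)$ with the fiber curve $c$ drilled out. Because $c$ is unknotted in $S^3$, the $1/m$-filling of the $c$-cusp of $N$ (with slope determined by the surface framing) returns $S^3$ containing a new link $L_m$, the image of $L$ under the $m$-fold Stallings twist, and this filled manifold is canonically identified with $M_m = S^3 \setminus L_m$.

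First I would establish that $f \circ \tau_c^m$ is pseudo-Anosov for $|m|$ sufficiently large. Non-triviality of the Stallings twist, combined with the hypothesis that $L$ is hyperbolic, ensures that $c$ is neither boundary-parallel nor compressible in $S^3 \setminus L$, so $N$ is itself hyperbolic. Thurston's hyperbolic Dehn surgery theorem then yields hyperbolicity of $M_m$ for all but finitely many $m$. Since $M_m$ fibers over $S^1$ with fiber $\Sigma$ and monodromy $f \circ \tau_c^m$, hyperbolicity of the mapping torus forces the monodromy to be pseudo-Anosov.

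Next, to apply Theorem~\ref{LimInf}, I would verify $lTV(M_m) > 0$, and this is where the hypothesis $|D \cap K| \leq 1$ is essential. If $D \cap K = \emptyset$, then $K$ is disjoint from the twisting support and so $K \subset L_m$ verbatim. If $|D \cap K| = 1$, then $c$ is a meridian of the component of $K$ it pierces, and $1/m$-surgery along a meridian is a Rolfsen twist which leaves the ambient isotopy class of $K$ in $S^3$ unchanged. In either case an isotopic copy of $K$ sits as a sublink of $L_m$, so $M_m = S^3 \setminus L_m$ is obtained from $S^3 \setminus K$ by drilling the remaining components. Invoking the drilling-monotonicity property $lTV(M') \geq lTV(M)$ whenever $M'$ is obtained by drilling a cusp from $M$ (a consequence of the Turaev-Viro surgery inequalities of Detcherry), one concludes $lTV(M_m) \geq lTV(S^3 \setminus K) > 0$, and Theorem~\ref{LimInf} then delivers Conjecture~\ref{AMU} for $f \circ \tau_c^m$.

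Finally, for pairwise independence I would use hyperbolic volume. Thurston's theorem gives $Vol(M_m) < Vol(N)$ with $Vol(M_m) \to Vol(N)$ and strict monotonicity in $|m|$, so the set $\{Vol(M_m)\}$ is infinite. If $f \circ \tau_c^m$ and $f \circ \tau_c^{m'}$ were both non-trivial powers of a common $h \in Mod(\Sigma)$, the corresponding mapping tori would be finite cyclic covers of $M(h)$ of bounded index, forcing only finitely many distinct volumes in any infinite subfamily — contradicting the strict monotonicity. The step I expect to be the main obstacle is the drilling argument of paragraph three: one must verify carefully that, when $|D \cap K| = 1$, the Rolfsen twist really produces a sublink of $L_m$ whose complement is homeomorphic to $S^3 \setminus K$ on the nose rather than up to some reframing or handleslide that would break the clean application of monotonicity for $lTV$.
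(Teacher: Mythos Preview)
The paper does not give its own proof of this statement: Theorem~\ref{StalTwist} is quoted verbatim from \cite{DetK19} and used as a black box in the proof of Proposition~\ref{NonCon}. There is therefore nothing in the present paper to compare your argument against.

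That said, your outline is essentially the argument of \cite{DetK19}. Realizing the Stallings twist as $1/m$-surgery on $c$, invoking Thurston hyperbolic Dehn filling on $N = S^3 \setminus (L \cup c)$ to get hyperbolicity of $M_m$ for large $|m|$, observing that the hypothesis $|D \cap K| \le 1$ guarantees $K$ survives as a sublink of $L_m$ (trivially if $D \cap K = \emptyset$, and via a Rolfsen twist on a single strand if $|D \cap K| = 1$), and then using the drilling monotonicity $lTV(S^3 \setminus L_m) \ge lTV(S^3 \setminus K) > 0$ together with Theorem~\ref{LimInf} --- all of this is exactly the line taken in \cite{DetK19}. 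Your worry about the Rolfsen step is unfounded: surgery on an unknot with one transverse strand changes nothing about that strand's isotopy class in $S^3$, so $S^3 \setminus K$ is literally a summand of $M_m$ obtained by filling the remaining cusps, and monotonicity applies cleanly.

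The one place your write-up is loose is the pairwise-independence step. You assert the covers would have ``bounded index,'' but nothing bounds the exponents $a,b$ a priori. The correct argument is that the volumes $Vol(M_m)$ are bounded above by $Vol(N)$ and take infinitely many distinct values; if $f\circ\tau_c^{m}$ and $f\circ\tau_c^{m'}$ were conjugate to $h^a$ and $h^{b}$ respectively, then $Vol(M_m)$ and $Vol(M_{m'})$ would both be positive integer multiples of $Vol(M(h))$, of which only finitely many lie below $Vol(N)$. Combined with the fact that a pseudo-Anosov has only finitely many roots (via stretch factors), each $m$ is dependent with only finitely many $m'$, so an infinite pairwise-independent subfamily can be extracted. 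With that correction your proposal matches \cite{DetK19}.
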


\begin{figure}
\labellist
\pinlabel \small{$2m$} at 155 110
\endlabellist
\centering
\includegraphics[scale=.6]{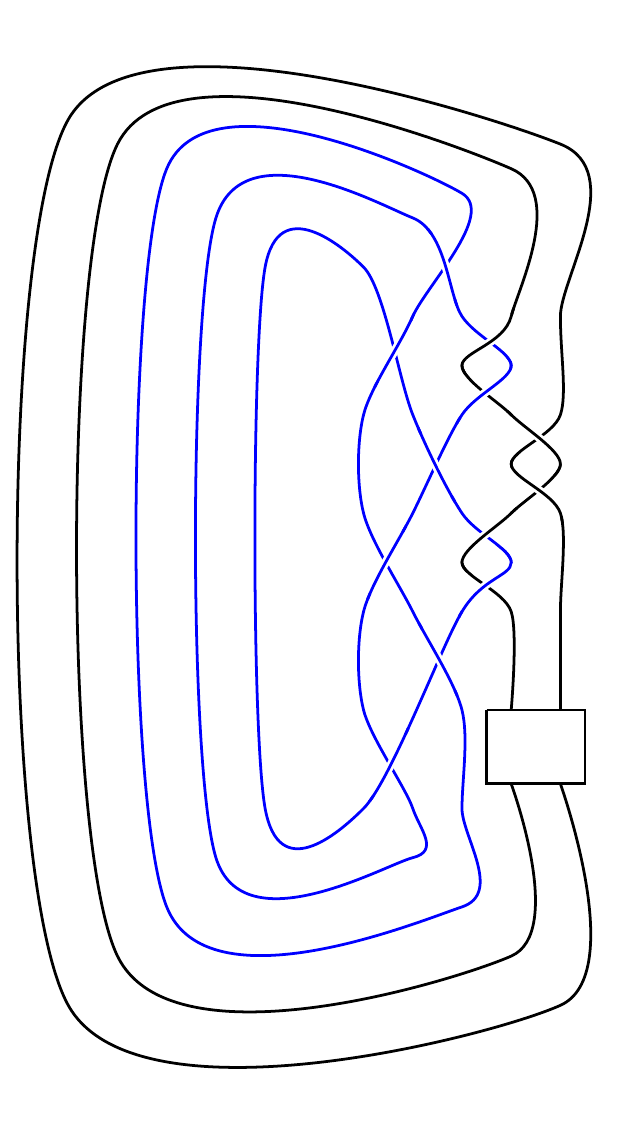}
\caption{The link $L_{5,m}$ where the box indicates $2m$ negative crossings with the Borromean rings as a sublink colored blue.  }
\label{Fig:L5m}
\end{figure}

We will now begin constructing our $2$-parameter family $L_{n,m}$ for $n \geq 4$ and $m \geq 1$ where $n$ is the number of components. Let $L$ be the Borromean rings which corresponds to the link $L_{6a4}$ in Table \ref{Table:Links} and \ref{Table:Mon}. We can see that the Borromean rings are the closure of the homogeneous braid $(\sigma_{2}^{-1} \sigma_{1})^3 \in B_3$. Define $K_i$ to be the component obtained as the closure of the $i$-th strand of the braid element such that $K_1 \cup K_2 \cup K_3 = L$.  

We will begin with the link $L_{5,m}$ which is the union of $L$ with the additional components $K_4$ and $K_5$ shown in Figure \ref{Fig:L5m}. We can see in  Figure \ref{Fig:L5m} that the parameter $m$ indicates the number of negative full-twists. Also notice our braid contains the subword $\sigma_{3}^2 \sigma_{4}^{-2}$.  As mentioned before, this indicates we will be able to obtain a non-trivial Stallings twist.  From $L_{5,m}$, we can add additional link components $K_i$ for $6 \leq i \leq n$ such that the union of the components is our link $L_{n,m}$. To obtain $L_{n+1,m}$ from $L_{n,m}$, we add a strand whose closure is $K_{n+1}$ such that following along $K_{n}$ has $2$ crossings with $K_{n+1}$, then $2$ crossings with $K_{n-1}$, and then again $2$ crossings with $K_{n+1}$ where the sign of the crossing is chosen such that the link $L_{n+1,m}$ is alternating. Each time we add an additional component, we increase the number of crossings by $4$. 

Now for the special case when $n=4$, we replace the $2m$ negative crossings in $L_{5,m}$ with $2m-1$ negative crossings. This results in a $4$-component link $L_{4,m}$.

\begin{prop}\label{NonCon}
Suppose that either $n=4$ and $g \geq 3$, or $g + 2 \geq n \geq 5$, then there are infinitely many pairwise independent pseudo-Anosov elements of $Mod(\Sigma_{g,n})$ that satisfy Conjecture \ref{AMU}.
\end{prop}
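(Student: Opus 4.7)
The plan is to apply Theorem \ref{StalTwist} of Detcherry--Kalfagianni to the two-parameter family $L_{n,m}$ constructed above, with the Borromean sublink $L = K_1 \cup K_2 \cup K_3$ playing the role of the link $K$ whose complement has positive $lTV$. First I would observe that since each $L_{n,m}$ is the closure of a homogeneous braid by construction, Stallings' theorem \cite{Sta78} guarantees that $S^3 \setminus L_{n,m}$ fibers over $S^1$ with fiber the Seifert surface $\Sigma$ produced by Seifert's algorithm. Using the standard Euler characteristic count for Seifert's algorithm applied to the closure of a braid in $B_n$ with $c$ crossings, namely $\chi(\Sigma) = n - c$, together with the fact that $\partial \Sigma$ has exactly $n$ components (one per component of $L_{n,m}$), a direct count of crossings in the explicit braid word yields a genus function $g(n,m)$ which grows linearly in $m$. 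A short arithmetic check then shows that for $n \geq 5$ the values $\{g(n,m) : m \geq 1\}$ cover precisely the range $g \geq n-2$, and the modified $n = 4$ family covers $g \geq 3$, matching the hypothesis of the proposition.

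Second, I would verify the two standing hypotheses of Theorem \ref{StalTwist}. The sublink $L = K_1 \cup K_2 \cup K_3$ is the Borromean rings, so by Corollary \ref{Borromean} the complement $S^3 \setminus L$ is hyperbolic of volume $2v_8$ and satisfies Conjecture \ref{VolC}; in particular $lTV(S^3 \setminus L) = 2v_8 > 0$. Hyperbolicity of $L_{n,m}$ itself (required so that the monodromy is pseudo-Anosov) should follow either from Menasco's criterion, since the added strands $K_4, \ldots, K_n$ are placed alternatingly and the twist region contains the subword $\sigma_3^2 \sigma_4^{-2}$ preventing reducibility, or alternatively by a direct \emph{SnapPy} verification for small $n$ combined with a hyperbolic Dehn surgery argument for large $n$ viewing $L_{n,m}$ as a filling of an augmented link.

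Third, the explicit braid word for $L_{n,m}$ contains the subword $\sigma_3^2 \sigma_4^{-2}$, which by the criterion established in the proof of Theorem~$5.4$ of \cite{DetK19} produces a simple closed curve $c \subset \Sigma$ supporting a non-trivial Stallings twist. The associated twisting disc $D$ can be localized to a neighborhood of strands $3$ and $4$ of the braid, and since the Borromean sublink $L$ lives on strands $1, 2, 3$, the geometric intersection number $|D \cap L|$ is at most one, satisfying the final hypothesis of Theorem \ref{StalTwist}. Applying that theorem with $K = L$, $f$ the monodromy of $L_{n,m}$, and $\tau_c$ the Dehn twist along $c$, then produces infinitely many pairwise independent pseudo-Anosov classes $\{f \circ \tau_c^k\}_{k}$ in $Mod(\Sigma_{g(n,m),\, n})$ satisfying Conjecture \ref{AMU}; sweeping $m \geq 1$ (and $n$) over the admissible range gives the desired result.

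The main obstacle I anticipate is the geometric verification in the third step: producing the explicit Stallings twist curve $c$ on the Seifert fiber surface of the homogeneous braid and confirming that the twisting disc meets the Borromean sublink in at most one point. This requires carefully tracking how the Seifert surface sits with respect to the $\sigma_3^2 \sigma_4^{-2}$ subword relative to the three strands forming $L$. A secondary subtlety is uniformly handling the $n = 4$ family, whose crossing count ($2m-1$ in place of $2m$) and component structure force a slightly different Euler characteristic computation, but the resulting genus formula still covers all $g \geq 3$ as $m$ varies.
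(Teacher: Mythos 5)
Your proposal is correct and follows essentially the same route as the paper: the same family $L_{n,m}$ with the Borromean rings as the sublink of positive $lTV$, Menasco's criterion for hyperbolicity, the non-trivial Stallings twist supplied by the $\sigma_3^2\sigma_4^{-2}$ subword, and the same Euler-characteristic genus count yielding $g=m+2$ for $n=4$ and $g=m+n-3$ for $n\ge 5$. The only cosmetic difference is that the paper first passes through the drilling inequality of \cite{DetK20} and Theorem \ref{LimInf} before invoking Theorem \ref{StalTwist}, whereas you apply Theorem \ref{StalTwist} directly, which its hypotheses permit.
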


\begin{proof}
 The link $L_{n,m}$ contains the Borromean rings for every $n \geq 4$ and $m \geq 1$ as a sublink.   By Corollary $5.3$ in \cite{DetK20},  Detcherry and Kalfagianni show that drilling out tori from a $3$-manifold does not decrease the Turaev-Viro invariant of the resultant manifold. In particular, since the Turaev-Viro invariant of the Borromean rings grows exponentially, then the Turaev-Viro invariant of the link complement of $L_{n,m}$ grows exponentially with  $lTV>0$. By Theorem \ref{LimInf}, for large enough $r$, there exists a quantum representation of the fibered link that is infinite order. Also  Menasco's criterion \cite{Men84}  states any prime non-split alternating diagram of a link that is not the standard diagram of the $T(2,q)$ torus link is a hyperbolic link. This implies that the monodromy is pseudo-Anosov and  Conjecture \ref{AMU} is verified for $L_{n,m}$. Now we can find a simple closed curve on our fibered surface that satisfies the conditions of Theorem \ref{StalTwist}, therefore we have infinitely many pairwise independent pseudo-Anosov elements for each fibered surface obtained from $L_{n,m}$ that satisfy Conjecture \ref{AMU}. 
 
 We will now calculate the resulting fibered surfaces of the link complement of $L_{n,m}$. Every link $L_{n,m}$ is the braid closure of a homogeneous braid, therefore we can calculate the genus of the Seifert surface using the  equation
 \begin{align}\label{Eq:Surface}
g = \frac{2 + C - \text{Braid Index} - n }{2}
\end{align}
where $C$ is the number of crossings. 
 
  First we consider the case when $n=4$. We have that the number of crossings $C=2m+11$ and the braid index is $5$. In this case from Equation (\ref{Eq:Surface}), we have the genus $g=m+2$ for $m \geq 1$. 
 
 Now consider the case for $n \geq 5$. The braid index is equal to $n$ and $C=2m+4n-8$. From this, we calculate $g=m+n-3$ where $m \geq 1$. This results in  our fibered surface $\Sigma_{g,n}$ having either  $n=4$ and $g \geq 3$, or $g+2 \geq n \geq 5$. 
\end{proof}

Proposition \ref{NonCon}  and its proof should be compared to  \cite[Theorem 1.4]{DetK19} where the authors  find infinitely many pairwise independent pseudo-Anosov elements for $n=2$ and $g \geq 3$, or $g\geq n \geq 3$, using monodromies of fibered links 
containing the  figure-eight knot as a sublink. Although the case using the Borromean rings has many overlapping fibered surfaces with using the figure-eight knot, we gain additional infinite families of pairwise independent pseudo-Anosov elements for $Mod(\Sigma_{g,n})$ for certain $g$ and $n$ not previously discussed. In particular, their examples have genus larger than or equal to the number of boundary components; however with the Borromean rings, we obtain infinitely many examples where the genus is strictly less than the number of boundary components.

\begin{table}[h]
\centering
\begin{tabular}{|c|c|c|c|}
\hline
 LinkInfo Name  &  Braid Index & Braid Element  & Fibered Surface\\
 \hline
 \hline
 $L_{6a4}$ & $3$ & $(\sigma_1 \sigma_2^{-1})^3$ & $\Sigma_{1,3}$ \\
 \hline
 $L_{6a4}$ & $3$ & $(\sigma_1^{-1} \sigma_2)^3$ & $\Sigma_{1,3}$ \\
 \hline
 $L_{8n7}$ & $4$ & $\sigma_1 \sigma_2^{-2} \sigma_1 \sigma_3^{-1} \sigma_2^{-2} \sigma_3^{-1}$& $\Sigma_{1,4}$  \\
 \hline
 $L_{10n87}$ & $3$ & $\sigma_1^3 \sigma_2^{2} \sigma_1^2 \sigma_2^2 \sigma_1$& $\Sigma_{3,3}$  \\
 \hline
 $L_{10n97}$ & $4$ & $\sigma_1^{-3} \sigma_2^{-2} \sigma_1^{-1} \sigma_3^{-1} \sigma_2^{-2} \sigma_3^{-1} $ & $\Sigma_{2,4}$ \\
 \hline
 $L_{10n108}$ & $4$ & $(\sigma_1^{-1} \sigma_2^{-1})^3 \sigma_3 \sigma_2^{-2} \sigma_3$ & $\Sigma_{2,4}$  \\
 \hline
 $L_{11n385}$ & $4$ & $(\sigma_1^{-1} \sigma_2^{-1})^3 \sigma_3 \sigma_2^{-2} \sigma_3 \sigma_2^{-1}$ & $\Sigma_{3,3}$  \\
 \hline
\end{tabular}
\caption{Fibered surfaces of links}
\label{Table:Mon}
\end{table}

\begin{cor}\label{Mon}
The monodromy of the closed braids in Table \ref{Table:Mon} obtained from Stalling's fibration is pseudo-Anosov and satisfies Conjecture \ref{AMU}.
\end{cor}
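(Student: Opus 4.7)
The plan is to combine Stallings' theorem on homogeneous braids with the hyperbolic volume identification already carried out in Theorem \ref{TableLinks} and with the AMU criterion of Theorem \ref{LimInf}. First I would check directly from the table that each braid word is \emph{homogeneous} in the sense of Stallings, i.e., every generator $\sigma_i$ that appears does so with only one sign. For example $(\sigma_1\sigma_2^{-1})^3$ uses $\sigma_1$ only positively and $\sigma_2$ only negatively; the analogous check is immediate for the remaining rows ($\sigma_1$ positive, $\sigma_2,\sigma_3$ negative for $L_{8n7}$; all positive for $L_{10n87}$; all negative for $L_{10n97}$; $\sigma_1,\sigma_2$ negative and $\sigma_3$ positive for $L_{10n108}$ and $L_{11n385}$). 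Stallings' theorem then gives that each braid closure is a fibered link in $S^3$, with fiber the Seifert surface produced by Seifert's algorithm and monodromy the corresponding mapping class $f$.

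Next, I would identify each braid closure with the LinkInfo entry in the same row. This is a purely diagrammatic verification that can be carried out either by drawing, or more efficiently by entering each braid word into \textit{SnapPy} and matching the resulting hyperbolic structure against the one built from the LinkInfo link (this is essentially the same numerical check already used in Theorem \ref{TableLinks}). Once the identification is established, Theorem \ref{TableLinks} supplies that the complement $S^3\setminus L$ is hyperbolic, has volume $2v_8$, and satisfies Conjecture \ref{VolC}. Because the complement is both hyperbolic and fibered over $S^1$, Thurston's hyperbolization theorem for fibered $3$-manifolds gives that the monodromy $f$ is pseudo-Anosov.

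For the AMU conclusion, Conjecture \ref{VolC} applied to $M(f)=S^3\setminus L$ gives
$$\lim_{r\to\infty}\frac{2\pi}{r}\log|TV_q(M(f))| \;=\; \mathrm{Vol}(M(f)) \;=\; 2v_8 \;>\;0,$$
so in particular $lTV(M(f))\geq 2v_8 >0$. Theorem \ref{LimInf} of Detcherry and Kalfagianni then immediately yields that $f$ satisfies Conjecture \ref{AMU}. The recorded fibered surface $\Sigma_{g,n}$ in the last column is read off by the homogeneous-braid genus formula (\ref{Eq:Surface}), $g=(2+C-\text{Braid Index}-n)/2$, using the crossing number $C$ of the braid word, the braid index listed, and the number of components of the link (known from LinkInfo); a row-by-row substitution recovers the claimed $\Sigma_{g,n}$.

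The main obstacle is the middle step: rigorously matching each braid closure to its LinkInfo entry. The homogeneity check, Thurston's theorem, Stallings' theorem, and Theorem \ref{LimInf} are all off-the-shelf once the link has been identified, but the identification itself is a finite but slightly delicate diagrammatic/computational task, and is where the proof requires actual verification rather than invocation of prior results.
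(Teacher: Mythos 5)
Your proposal is correct and follows essentially the same route as the paper: Stallings' fibration for the homogeneous braids, Theorem \ref{TableLinks} for hyperbolicity and Conjecture \ref{VolC}, hyperbolicity of the mapping torus to conclude the monodromy is pseudo-Anosov, Theorem \ref{LimInf} for the AMU conclusion, and Equation (\ref{Eq:Surface}) for the fiber surface. Your explicit homogeneity check and your flagging of the braid-word-to-LinkInfo identification as the step requiring actual verification are reasonable refinements of details the paper leaves implicit.
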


\begin{proof}
From Theorem \ref{TableLinks}, the closed braids in Table \ref{Table:Mon} have homeomorphic complements to the fundamental shadow links, therefore they are hyperbolic and  satisfy Conjecture \ref{VolC}. Since they are hyperbolic,  the monodromy of their fibering as  mapping tori will be pseudo-Anosov. From Theorem \ref{LimInf}, since they satisfy Conjecture \ref{VolC}, then their monodromies also satisfy Conjecture \ref{AMU}. Because the fibered surface is obtained from Seifert's algorithm, the genus $g$ can be computed by Equation (\ref{Eq:Surface}) in Proposition \ref{NonCon} where the number of boundary components $n$ of the surface is the number of components of the closed braid.
\end{proof}

\begin{remark}\label{Rem:Menasco}
Since we used Menasco's criterion, we chose to use the Borromean rings example from Table \ref{Table:Mon}. This is because the family $L_{n,m}$ will be alternating and be guaranteed to be hyperbolic with pseudo-Anosov monodromy. We could have done the previous calculation with any other homogeneous braid from Table \ref{Table:Mon}; however, the resulting manifold may not be hyperbolic. We know the Turaev-Viro invariant of these links grows exponentially and is bounded above by the Gromov norm, therefore the associated monodromy is either pseudo-Anosov or reducible with pseudo-Anosov pieces. In this setting,  additional infinite families can be constructed from the remaining links in Table \ref{Table:Mon} that satisfy Conjecture \ref{AMU}; however, they may only contain pseudo-Anosov pieces.
\end{remark}

Now we have all the necessary components in order to prove Theorem \ref{4Sphere}.

\begin{proof}[Proof of Theorem \ref{4Sphere}]
The combination of Proposition \ref{NonCon} and Corollary \ref{Mon} provide a pseudo-Anosov element of $Mod(\Sigma_{g,4})$ for every $g \geq 1$. In particular, we see from Proposition \ref{NonCon} that for $g\geq 3$, there are infinitely many pairwise independent pseudo-Anosov elements satisfying Conjecture \ref{AMU}. For the remaining case, Andersen, Masbaum, and Ueno in \cite{AndMU}   verified the conjecture for every pseudo-Anosov element in $Mod(\Sigma_{0,4})$ as noted in the introduction. 
\end{proof}

\subsection{Surfaces of genus $0$}\label{5.1}

In addition to Theorem \ref{4Sphere}, we can find examples in genus $0$ surfaces with at least $4$ boundary components that satisfy Conjecture \ref{AMU}. In order to state the result, we recall the following. We say an element $b \in B_m$ is a \emph{pure braid} if the beginning and the ending of each strand are in the same position. Under the correspondence of the mapping class group $MCG(D_m)$ with the braid group $B_m$, if we  consider the boundary of $D_m$ to be an additional puncture, then $\Gamma(b)$ as an element of the pure braid group fixes all $(m+1)$-punctures of $D_m$.  In particular, we can identify $b$ as an element of $Mod(\Sigma_{0,m+1})$. Under this identification, we define the  family of pure braids $\{\omega_m\}$ for  $m \geq 4$ where $\{b_k\}$ is the braid family from Theorem \ref{Fibered}:
 \begin{itemize}
\item For $m=4$, define $\omega_4 := b_1 \in Mod(\Sigma_{0,4})$.
\item For $m \geq 6$ and even, define $\omega_m := b_{m-4} \in  Mod(\Sigma_{0,m}).$
\item For $m=5$ and $m=7$, define $\omega_m := \omega_{m-1}' \in Mod(\Sigma_{0,m})$ where $\omega_{m-1} '$ is an embedding of $\omega_{m-1} \in Mod(\Sigma_{0,m-1})$ into $Mod(\Sigma_{0,m})$ sending each generator $\sigma_j$ to $\sigma_j$. 
\item For $m=9$, define $\omega_{9} \in Mod(\Sigma_{0,9})$ by
 $$\omega_9:=(\sigma_4 \sigma_3^2 \sigma_4)(\sigma_5 \sigma_4 \sigma_3 \sigma_2^2 \sigma_3 \sigma_4 \sigma_5)(\sigma_6 \sigma_5 \sigma_4 \sigma_3 \sigma_2 \sigma_1^2 \sigma_2^{-1} \sigma_3 \sigma_4 \sigma_5 \sigma_6)(\sigma_7 \sigma_6 \sigma_5 \sigma_4 \sigma_3 \sigma_2^2  \sigma_3^{-1} \sigma_4 \sigma_5 \sigma_6 \sigma_7).$$
\item For $m=7+2s$ and $s \geq 2$, define $\omega_{m}\in  Mod(\Sigma_{0,m})$ by
 $$ \omega_m:= \omega_9' \prod_{i=2}^{s} (\sigma_{4+2i} \sigma_{3+2i} \cdots \sigma_4 \sigma_3^2 \sigma_4 \cdots \sigma_{3+2i} \sigma_{4+2i})(\sigma_{5+2i} \sigma_{4+2i} \cdots \sigma_3 \sigma_2^2 \sigma_3^{-1} \sigma_4 \sigma_5 \cdots \sigma_{4+2i}\sigma_{5+2i})$$
where $\omega_9'$ is an embedding of $\omega_9 \in Mod(\Sigma_{0,9})$ into $Mod(\Sigma_{0,m})$ sending each generator $\sigma_j$ to $\sigma_j$.
\end{itemize}

\begin{cor}\label{Even}
For $n \geq 4$, the  element $\omega_n \in Mod(\Sigma_{0,n})$  satisfies Conjecture \ref{AMU}. Additionally  if $n \not \in \{5,7\} $, then $\omega_n$ is pseudo-Anosov. 
\end{cor}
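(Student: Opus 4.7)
The plan is to identify each $\omega_n$ as (the pure braid presentation of) the monodromy of a hyperbolic fibered link complement in $S^3$ whose Turaev-Viro invariants grow as predicted by Conjecture \ref{VolC}. Once this is done, Thurston's hyperbolization of fibered $3$-manifolds gives that $\omega_n$ is pseudo-Anosov, and Theorem \ref{LimInf} of Detcherry--Kalfagianni applied to $lTV(M(\omega_n))=\mathrm{Vol}(M(\omega_n))>0$ gives Conjecture \ref{AMU}. Throughout I will use the identification $\Gamma\colon B_m\to MCG(D_m)\hookrightarrow Mod(\Sigma_{0,m+1})$ obtained by treating $\partial D_m$ as an $(m{+}1)$-st puncture, under which a pure braid on $m$ strands becomes a mapping class of a sphere with $m{+}1$ punctures, and under which the mapping torus of $\Gamma(b)$ is homeomorphic to $S^3\setminus\overline b$.

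For $n=4$ and $n\geq 6$ even: by construction $\omega_n=b_{n-4}$ with $b_{n-4}\in B_{n-1}$ the braid from Theorem \ref{Fibered}. Theorem \ref{Fibered} directly provides that $M_{n-4}=S^3\setminus\overline{b_{n-4}}$ is hyperbolic of volume $2(n-4)v_8$ and satisfies Conjecture \ref{VolC}. Hence $\omega_n$ is pseudo-Anosov (since $M(\omega_n)\cong M_{n-4}$ is hyperbolic) and $lTV(M(\omega_n))=2(n-4)v_8>0$ when $n\geq 6$, so Theorem \ref{LimInf} applies. For $n=4$ the mapping torus of $b_1\in B_3$ is the Borromean twisted sister of Corollary \ref{Borromean}, which is hyperbolic of volume $2v_8$ satisfying Conjecture \ref{VolC}, so the same argument works.

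For $n=9$ and $n=7+2s$ with $s\geq 2$: these are precisely the cases flagged in Remark \ref{Rem:nBraid}. The plan is to build, via the algorithm of Theorem \ref{Sublink}, a fundamental shadow link $L$ in $S^3$ from a planar $4$-valent graph so that $L$ decomposes as the braid axis of $\omega_n$ together with the closure of $\omega_n$ as a closed $3$-braid sublink. Concretely, one selects the planar $4$-valent graph whose complexity equals the length of $\omega_n$, replaces its vertices by the six-strand pieces of Figure \ref{Fig:Graph} according to the signs of the generators appearing in $\omega_n$, and replaces edges by identity $3$-braids; Theorem \ref{Sublink} (applied in the Stein-map formulation) then realizes the resulting fundamental shadow link as the union of $\overline{\omega_n}$ with trivial augmenting components that can be isotoped off, identifying $S^3\setminus\overline{\omega_n}$ with a fundamental shadow link complement up to homeomorphism. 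Hyperbolicity, the volume formula, and Conjecture \ref{VolC} all follow from \cite{BelDKY}, and Theorem \ref{LimInf} concludes the argument.

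For $n\in\{5,7\}$: by definition $\omega_n=\omega_{n-1}'$ fixes a peripheral disk and is therefore reducible, not pseudo-Anosov. To verify Conjecture \ref{AMU} it suffices to exhibit a coloring under which $\rho_{r,c}(\omega_n)$ has infinite order. Since $\omega_{n-1}$ has already been shown to satisfy the conjecture, one takes a coloring $c'$ of $\partial\Sigma_{0,n-1}$ for which $\rho_{r,c'}(\omega_{n-1})$ has infinite order and extends it by any admissible color on the extra puncture; because $\omega_n$ acts as the identity on the split-off disk, the quantum representation factors through the representation of $\omega_{n-1}$ on the pseudo-Anosov subsurface, so infinite order is preserved. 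Equivalently, Corollary \ref{Gromov} gives $lTV(M(\omega_n))=lTV(M(\omega_{n-1}))>0$, and the standard compatibility of quantum representations with subsurface inclusion yields the conclusion.

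The main obstacle will be Step~3: verifying that the braid words written out explicitly for $\omega_9$ and $\omega_{7+2s}$ are indeed realized, up to isotopy, as the monodromies of fundamental shadow link complements produced by the construction of Theorem \ref{Sublink}. The signs in the middle of each factor (the $\sigma_3^{-1}$ appearing where the $b_k$ formula has $\sigma_3$, for instance) are precisely what is needed to make $\omega_n$ a pure braid and produce the required closed $3$-braid sublink, but matching these sign choices with a concrete replacement of vertices in the planar graph, and then verifying the resulting link diagram by Kirby moves or the Stein-map procedure of Section \ref{2.4}, is the computational heart of the argument.
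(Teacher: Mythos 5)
Your overall strategy coincides with the paper's: identify each $\omega_n$, via the pure-braid correspondence $B_m\to Mod(\Sigma_{0,m+1})$, with the monodromy of a braided-link complement that is homeomorphic to a fundamental shadow link complement, and then convert exponential growth of the Turaev--Viro invariants into an infinite-order $\rho_{r,c}(\omega_n)$. Two places where your details diverge from, or fall short of, the paper's argument deserve attention. First, for $n=7+2s$ the paper does not match $\omega_n$ itself to a planar graph of complexity equal to its length; it applies Theorem \ref{Sublink} to the short unreduced word $(\sigma_1\sigma_1^{-1})^s(\sigma_2\sigma_2^{-1})\in B_3$ of length $2s+2$, so the closed $3$-braid sublink is the closure of \emph{that} word (a $3$-component unlink), not of $\omega_n$, and the passage to $\overline{\omega_{7+2s}}$ is a full twist along the component $L'$ of Figure \ref{Fig:3Braid}: after the twist the \emph{entire} link, augmenting components included, becomes the braided link. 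Your phrase about ``trivial augmenting components that can be isotoped off'' is wrong as stated --- dropping those components would change the manifold. Second, for $n\in\{5,7\}$ Theorem \ref{LimInf} cannot be cited as stated, since $\omega_n$ is reducible; the paper instead combines Corollary \ref{Gromov} (to get $lTV(M(\omega_n))=2kv_8>0$) with the trace identity of Proposition \ref{TVTrace} and the polynomial growth of the TQFT dimensions to produce an infinite-order $\rho_{r,c}(\omega_n)$ directly. Your primary argument --- that $\rho_{r,c}(\omega_n)$ ``factors through'' the representation of $\omega_{n-1}$ on a subsurface --- is plausible TQFT folklore but is nowhere established in the paper and would require verifying that an admissible coloring extends with a nonvanishing summand; your fallback via Corollary \ref{Gromov} is the right route, but it must be closed with Proposition \ref{TVTrace} rather than with subsurface compatibility.
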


As mentioned previously, Egsgaard and Jorgensen, and separately Santharoubane, verified the existence of elements of the mapping class group $Mod(\Sigma_{0,n})$  satisfying Conjecture \ref{AMU} in \cite{EgsJ16, San17} for when $n$ is even.  By using Theorems \ref{Fibered} and \ref{Sublink}, the examples provided in this paper  can be viewed as explicit elements coming from the mapping class group. In addition to the elements being explicit, we also obtain pseudo-Anosov elements in $Mod(\Sigma_{0,2l+1})$ for $l \geq 4$. As noted in the introduction, these are the first pseudo-Anosov elements obtained in $Mod(\Sigma_{0,n})$  that satisfy Conjecture \ref{AMU} for an odd $n$.  We also remark that Andersen, Masbaum, and Ueno in \cite{AndMU}   verified the conjecture for every pseudo-Anosov element in $Mod(\Sigma_{0,4})$, therefore we could have chosen any pseudo-Anosov element in the $n=4$ case.

In order to prove  Corollary \ref{Even}, we will need to use the topological quantum field theory for colored links in a cobordism where the details can be found in \cite{BlaHMV95}. We denote the Reshetikhin-Turaev invariant for closed manifolds with an embedded colored link $(L,c)$ by $RT_r(M,(L,c))$ where $c$ is an assignment of components of $L$ with elements   of $ U_r=\{0,2, \dots, r-3\}$. The following   relates the traces of the quantum representations to the invariants of the colored link in a closed manifold.

\begin{thm}[\cite{BlaHMV95}]\label{RTTrace}
For $r \geq 3$ and odd, let $\Sigma_{g,n}$ be a compact oriented surface with a coloring of the boundary from elements of $U_r$. Define $\overline{\Sigma_{g,n}}$ to be the surface obtained from $\Sigma_{g,n}$ by capping the components of $\partial \Sigma_{g,n}$ by disks. For $f \in Mod(\Sigma_{g,n})$, let $\overline{f} \in Mod(\overline{\Sigma_{g,n}})$ be the mapping class of the extension of $f$ from capping the disks by the identity. Now let $L\subset M(\overline{f})$ whose components consists of the cores of the tori in $M(\overline{f})$ arising from the capping disks in the mapping tori, then  
$$Tr(\rho_{r,c}(f))= RT_r(M(\overline{f}),(L,c)).$$
\end{thm}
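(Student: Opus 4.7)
The plan is to derive this identity as a special case of the fundamental trace property of the Witten--Reshetikhin--Turaev TQFT, adapted to handle colored boundary. First I would recall the general axiom: for a closed oriented surface $\Sigma$ with self-homeomorphism $\varphi$, the TQFT assigns a vector space $V_r(\Sigma)$ and an endomorphism $V_r(\varphi)$ whose trace equals the invariant of the mapping torus, $\mathrm{Tr}(V_r(\varphi)) = Z_r(M(\varphi))$. This is built into the cobordism-category formalism developed in \cite{BlaHMV95}: the mapping torus can be cut open into the cylinder $\Sigma\times[0,1]$ reglued by $\varphi$, and the cylinder is the identity cobordism, so closing it up by $\varphi$ literally computes the trace.

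Next I would handle the boundary coloring. By the basic factorization property of the TQFT, the vector space $V_r(\Sigma_{g,n}, c)$ associated to $\Sigma_{g,n}$ with a coloring $c$ of $\partial \Sigma_{g,n}$ is canonically isomorphic to the subspace $V_r(\overline{\Sigma_{g,n}}, \{p_i\}, c)$ of the closed-surface TQFT space associated to $\overline{\Sigma_{g,n}}$ with marked points $p_i$ (one in each capping disk) labeled by the colors $c_i$. Under this identification, $\rho_{r,c}(f)$ corresponds to the action of $\overline{f}$ on $V_r(\overline{\Sigma_{g,n}}, \{p_i\}, c)$, because $\overline{f}$ was constructed by extending $f$ via the identity on each capping disk, so it fixes each $p_i$ and agrees with $f$ up to isotopy outside a neighborhood of the cores.

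Then I would assemble the pieces. Building the mapping torus of $\overline{f}$ using the marked surface $(\overline{\Sigma_{g,n}}, \{p_i\})$ sweeps each marked point $p_i$ around an $S^1$ factor, producing exactly the cores of the solid tori in $M(\overline{f})$ coming from the capping disks; these cores carry the colors $c_i$ and together form the colored link $(L,c)$. Applying the trace property in the marked/colored setting yields
\[
\mathrm{Tr}(\rho_{r,c}(f)) = \mathrm{Tr}\bigl(V_r(\overline{f})\bigr|_{V_r(\overline{\Sigma_{g,n}},\{p_i\},c)}\bigr) = Z_r\bigl(M(\overline{f}),(L,c)\bigr) = RT_r\bigl(M(\overline{f}),(L,c)\bigr),
\]
which is the claim.

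The main obstacle I anticipate is the bookkeeping in the second step: one must verify that the canonical isomorphism $V_r(\Sigma_{g,n},c)\cong V_r(\overline{\Sigma_{g,n}},\{p_i\},c)$ intertwines the quantum action of $f$ with that of $\overline{f}$, and that the framings of the link $L$ produced by the sweep are the ones for which the $RT_r$ invariant is computed (the ``blackboard" framing coming from the product structure on a collar of the capping disks). Both points are standard in the Blanchet--Habegger--Masbaum--Vogel construction, but they are where the identification must be made carefully; once in place, the trace axiom finishes the argument.
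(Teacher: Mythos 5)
The paper does not actually prove this statement; it is quoted directly from the Blanchet--Habegger--Masbaum--Vogel TQFT framework, so there is no in-paper argument to compare against. Your sketch is the standard derivation that the citation implicitly invokes --- trace axiom for the closed-surface TQFT, the identification $V_r(\Sigma_{g,n},c)\cong V_r(\overline{\Sigma_{g,n}},\{p_i\},c)$ for marked points colored by $c$, and the observation that the mapping torus of $\overline{f}$ sweeps the marked points into the colored link $(L,c)$ --- and it is essentially correct. The only point worth making explicit beyond your framing remark is that a Dehn twist about a boundary component of $\Sigma_{g,n}$ dies in $Mod(\overline{\Sigma_{g,n}},\{p_i\})$ but acts on $V_r(\Sigma_{g,n},c)$ by the twist coefficient of the color $c_i$; this is exactly compensated by the induced change of framing of the corresponding component of $L$, so the identity is consistent only with the framing convention you describe (and up to the usual anomaly resolved by $p_1$-structures, which the statement itself suppresses).
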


From the Theorem \ref{RTTrace}, Detcherry and Kalfagianni relate the Turaev-Viro invariant to the traces of the quantum representations. This result can be found in the proof of Theorem $1.2$ in \cite{DetK19}.

\begin{prop}[\cite{DetK19}, Theorem $1.2$]\label{TVTrace}
Let $f \in Mod(\Sigma_{g,n})$ and $M(f)$ the mapping torus of $f$, then 
$$TV_q(M(f))= \sum_c |Tr(\rho_{r,c}(f)|^2.$$
\end{prop}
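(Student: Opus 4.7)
The plan is to combine Theorem \ref{RTTrace}, which identifies each trace $Tr(\rho_{r,c}(f))$ with a Reshetikhin-Turaev invariant of a closed manifold with a colored link, with the fundamental Turaev-Walker--Roberts identity that expresses the Turaev-Viro invariant of a 3-manifold as a sum of squared moduli of Reshetikhin-Turaev invariants, summed over admissible colorings of the boundary. Since $M(f)$ has toroidal boundary (one torus for each component of $\partial \Sigma_{g,n}$), this sum will be precisely indexed by colorings $c : |\partial \Sigma_{g,n}| \to U_r$, which matches the summation on the right-hand side.

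First I would describe the topology carefully: the mapping torus $M(f)$ has boundary $\bigsqcup_{i=1}^n T^2_i$, one torus arising from each boundary circle of $\Sigma_{g,n}$. The closed manifold $M(\overline{f})$ obtained from the extension $\overline{f}$ (capping boundary circles by disks and extending by the identity) is exactly the Dehn filling of $M(f)$ along the meridians coming from the boundary circles $\partial \Sigma_{g,n} \times \{t\}$; the cores of the filling solid tori form the link $L \subset M(\overline{f})$ of Theorem \ref{RTTrace}.

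Second, I would invoke the identity
\begin{equation*}
TV_q(M) \; = \; \sum_{c} \bigl| RT_r\bigl(\widehat{M}, (L_c, c)\bigr) \bigr|^2,
\end{equation*}
valid when $M$ has toroidal boundary, where $\widehat{M}$ denotes a chosen Dehn filling, $L_c$ the cores of the filling tori, and $c$ ranges over colorings by elements of $U_r$ of these cores. This is essentially Roberts' theorem $TV_q = |RT_r|^2$ extended to bordered manifolds via the TQFT gluing axioms of \cite{BlaHMV95}: one decomposes the identity morphism on the boundary vector space $V_r(\partial M)$ in the orthonormal basis of colored cores of a solid-torus filling, and the diagonal sum over this basis produces the desired formula. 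Applying this identity to $M(f)$, with the specific filling $M(\overline{f})$ and link $L$ identified above, gives
\begin{equation*}
TV_q(M(f)) \; = \; \sum_c \bigl| RT_r(M(\overline{f}), (L, c)) \bigr|^2.
\end{equation*}

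Finally, substituting $RT_r(M(\overline{f}), (L, c)) = Tr(\rho_{r,c}(f))$ from Theorem \ref{RTTrace} yields the claimed formula. The main obstacle is to justify the Turaev-Viro/Reshetikhin-Turaev bordered identity carefully with the conventions and normalizations being used; one must check that the orthonormal basis of $V_r(T^2)$ consisting of colored cores matches the indexing set $U_r$ used both in the $SO(3)$ TQFT setup and in the definition of $\rho_{r,c}$, and that no extra gluing factors or framing corrections enter. Once the normalizations are reconciled, the remainder of the argument is a direct combination of the two cited results.
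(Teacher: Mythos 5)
The paper does not actually prove Proposition \ref{TVTrace}; it imports it verbatim from \cite{DetK19}, Theorem 1.2. Your reconstruction is correct and is essentially the argument given there: one applies the bordered form of Roberts' theorem, $TV_q(M)=\|RT_r(M)\|^2$ with $RT_r(M)\in V_r(\partial M)$, expands $RT_r(M(f))$ in the orthonormal basis of $V_r(T^2)^{\otimes n}$ given by colored cores of the filling solid tori (indexed exactly by $U_r$ in the $SO(3)$ theory), identifies each coefficient with $RT_r(M(\overline{f}),(L,c))$, and then invokes Theorem \ref{RTTrace}. No gaps beyond the normalization bookkeeping you already flag.
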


Now by explicitly using the relationship between Conjecture \ref{VolC} and Conjecture \ref{AMU} shown in \cite{DetK19}, we will proceed with the proof for Corollary \ref{Even}.

\begin{figure}
\labellist
\pinlabel \small{$L'$} at 443 269
\pinlabel \small{$(s-1)$} at 392 78
\pinlabel \small{$B$} at 226 125
\pinlabel \small{$=$} at 72 135
\pinlabel \small{$s$} at 30 135
\pinlabel \small{$2s$} at 161 137
\endlabellist
\centering
\includegraphics[scale=.75]{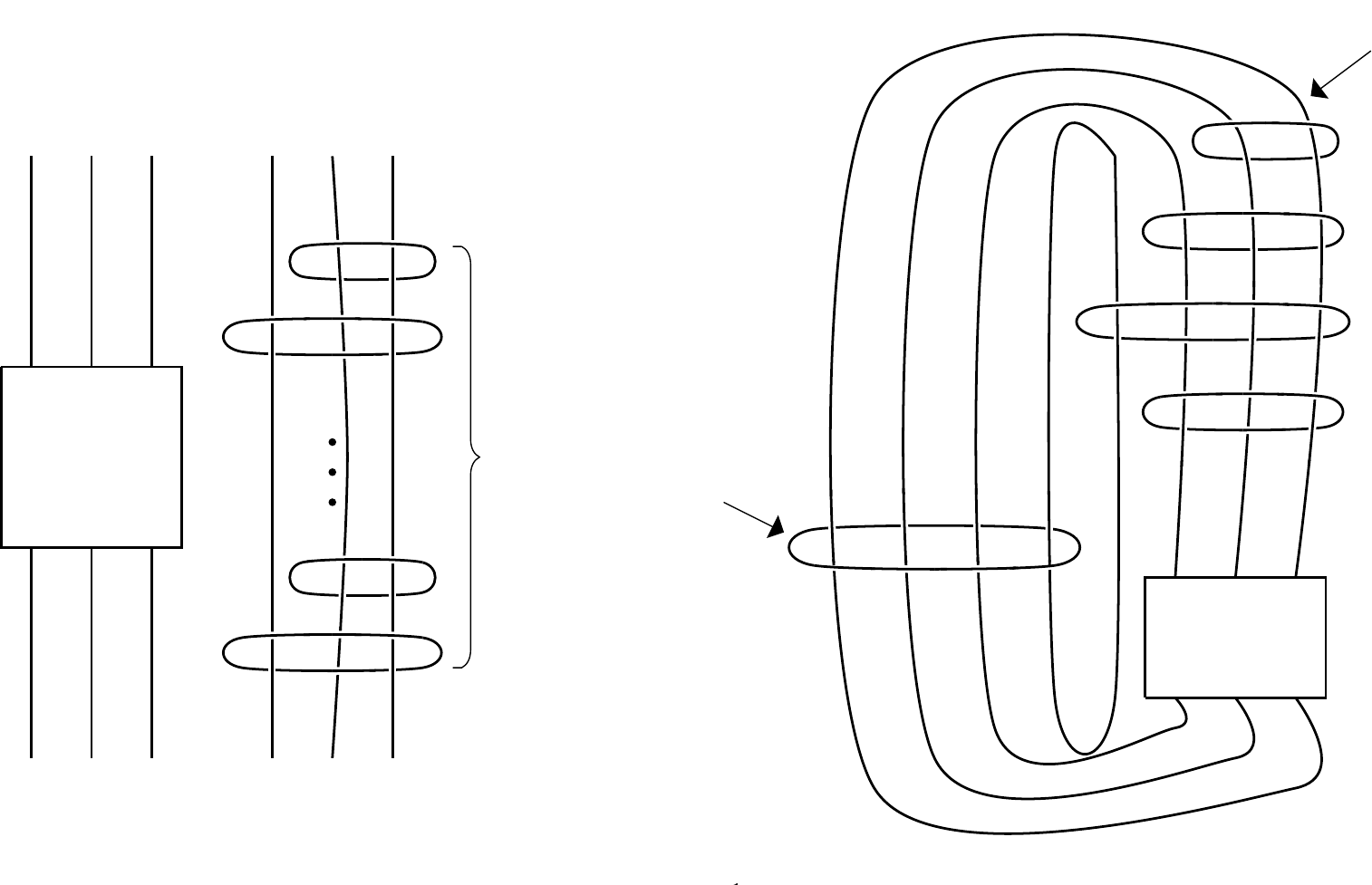}
\caption{On the left, we define the boxed diagram as the identity $3$-braid  with $2s$ additional components. We define the diagram to be the identity $3$-braid without additional components for a box labeled with $0$. On the right, we show the fundamental shadow link in $S^3$, up to ambient isotopy,  obtained from the algorithm in Theorem \ref{Sublink} for the element  $ (\sigma_1 \sigma_1^{-1})^s (\sigma_2 \sigma_2^{-1} ) \in B_3$. The braid axis is labeled $B$, the link component that bounds a $(3+2s)$-punctured disk is labeled $L'$, and the link contains $(7+2s)$ components.}
\label{Fig:3Braid}
\end{figure}

\begin{proof}[Proof of Corollary \ref{Even}]
First we will begin with showing the case for our genus $0$ surface of even boundary components.  We note that for every $m\geq 3$ and odd, there exists a manifold fibering over $S^1$ that has fibered surface $D_m$ whose complement satisfies Conjecture \ref{VolC}. The examples of such manifolds can be obtained from the manifolds $M_{1}$ and $M_k$ for $k$ even in Theorem \ref{Fibered}. These manifolds are the complements of a hyperbolic link in $S^3$ with fibered surface $D_3$ and $D_{k+3}$ with volumes $2v_8$ and $2kv_8$, respectively. 

Fix $M_k$ to be one of these  manifolds, and let $\Gamma(b_k)$ be its corresponding monodromy. Under the correspondence of $MCG(D_{m})$ with the braid group $B_{m}$, $\Gamma(b_k)$ represents an element of the pure braid group such that $b_k$ can be identified as an element in $Mod(\Sigma_{0,m+1})$ with $M(b_k)$ homeomorphic to $M_k$

Since  our manifold $M_k$ satisfies Conjecture \ref{VolC} and is homeomorphic to $M(b_k)$, in particular we have $Vol(M_k)>0$ and $TV_q(M(b_k)) = TV_q(M_k)$. Now by  Proposition \ref{TVTrace}, we have the following equalities.
\begin{align*}
\lim_{r\to \infty}   \frac{2 \pi}{r} \log |\sum_c |Tr(\rho_{r,c}(b_k))|^2 | &=\lim_{r\to \infty}   \frac{2 \pi}{r} \log |TV_q(M(b_k))|=\lim_{r\to \infty}   \frac{2 \pi}{r} \log| TV_q(M_k)|\\ 
&= Vol(M_k) >0. 
\end{align*}

This implies that the traces of the quantum representations must grow exponentially as $r \to \infty$. Now in \cite{BlaHMV95}, it is shown that the dimensions of the vector spaces of the quantum representations only grows polynomially with $r$, therefore there exists a large enough $r$ such that 
$|Tr(\rho_{r,c}(b_k))|$ exceeds the dimension. This means for some color $c$, there is a representation $\rho_{r,c}(b_k)$ such that at least one eigenvalue has modulus larger than $1$. Therefore $\rho_{r,c}(b_k)$ is infinite order. 

Since $M_k$ is a fundamental shadow link, then $M(b_k)$ is hyperbolic and $b_k \in Mod(\Sigma_{0,m+1})$ is pseudo-Anosov with $m \geq 3$ and odd. By the previous argument, we can find a quantum representation of $b_k$ that has infinite order for large enough $r$. This shows for any $n\geq 4$ and even, we can find a pseudo-Anosov element $b_k \in Mod(\Sigma_{0,n})$ that satisfies Conjecture \ref{AMU}.

Now in a similar way, we can consider the case for surfaces of genus $0$ and with an odd number of boundary components. By Corollary \ref{Gromov}, we can find explicit elements in the mapping class group of $D_m$ where $m\geq 4$ and even such that their asymptotics grow exponentially. The monodromies of these manifolds are the same as above, except that we consider them the elements of a mapping class group with more boundary components. In terms of braid groups, this is equivalent to adding additional strands. As before, this implies that for a large enough $r$, there exists a quantum representation that has infinite order. In these cases, the monodromies will not be pseudo-Anosov; however, they will contain pseudo-Anosov pieces and  satisfy Conjecture \ref{AMU}.

Finally we show the case when $n \geq 9$ and odd. Consider $ (\sigma_1 \sigma_1^{-1})^s (\sigma_2 \sigma_2^{-1} ) \in B_3$ as the unreduced word of length $2s +2$ for $s \geq 1$. From Theorem \ref{Sublink}, we can construct a hyperbolic manifold of volume $2(2s+2)v_8$ satisfying Conjecture \ref{VolC} which is the complement of the  link shown in the right diagram of Figure \ref{Fig:3Braid} where the box labeled with $(s-1)$ is defined on the left diagram. We note that when $(s-1)=0$, the boxed diagram will be defined to be the identity $3$-braid without additional components. As in the proof of Theorem \ref{Fibered}, we can perform a full-twist along the component $L'$ to obtain a braided link $\overline{\omega_{7+2s}}$ with braid axis $B$ where $\omega_{7+2s}$ is a pure braid. Since $L'$ and $B$ bound a $(3+2s)$ and $4$-punctured disk, respectively, then the resultant link will have fibered surface $D_{6+2s}$. The elements in the pure braid group obtained from the full-twist  are  defined  explicitly as follows:

 \begin{itemize}
\item For $s=1$, define $\omega_{7+2s} \in B_8$ by
 $$\omega_{7+2s}:=(\sigma_4 \sigma_3^2 \sigma_4)(\sigma_5 \sigma_4 \sigma_3 \sigma_2^2 \sigma_3 \sigma_4 \sigma_5)(\sigma_6 \sigma_5 \sigma_4 \sigma_3 \sigma_2 \sigma_1^2 \sigma_2^{-1} \sigma_3 \sigma_4 \sigma_5 \sigma_6)(\sigma_7 \sigma_6 \sigma_5 \sigma_4 \sigma_3 \sigma_2^2  \sigma_3^{-1} \sigma_4 \sigma_5 \sigma_6 \sigma_7).$$
\item For $s\geq 2$, define $\omega_{7+2s}\in  B_{6+2s}$ by
 $$ \omega_{7+2s}:= \omega_9' \prod_{i=2}^{s} (\sigma_{4+2i} \sigma_{3+2i} \cdots \sigma_4 \sigma_3^2 \sigma_4 \cdots \sigma_{3+2i} \sigma_{4+2i})(\sigma_{5+2i} \sigma_{4+2i} \cdots \sigma_3 \sigma_2^2 \sigma_3^{-1} \sigma_4 \sigma_5 \cdots \sigma_{4+2i}\sigma_{5+2i})$$
where $\omega_9'$ is an embedding of $\omega_9 \in Mod(\Sigma_{0,9})$ into $Mod(\Sigma_{0,m})$ sending each generator $\sigma_j$ to $\sigma_j$.
\end{itemize}

Since $\omega_{7+2s} \in B_{6+2s}$  is a pure braid and the complement of the braided link $\overline{\omega_{7+2s}}$ satisfies Conjecture \ref{VolC}, we can use the same argument as the case when $n$ was even. Therefore for any $s \geq 1$, we can find a corresponding element in $Mod(\Sigma_{0,7+2s})$  which is pseudo-Anosov and satisfies Conjecture \ref{AMU}.

\end{proof}

\Addresses

\end{document}